\newcommand{\bC}{\mathbb{C}}
\newcommand{\bD}{\mathbb{D}}
\newcommand{\bF}{\mathbb{F}}
\newcommand{\bP}{\mathbb{P}}
\newcommand{\bQ}{\mathbb{Q}}
\newcommand{\bR}{\mathbb{R}}
\newcommand{\bZ}{\mathbb{Z}}
\newcommand{\cK}{\mathcal{K}}
\newcommand{\cO}{\mathcal{O}}
\newcommand{\cP}{\mathcal{P}}
\newcommand{\cT}{\mathcal{T}}
\newcommand{\cU}{\mathcal{U}}
\newcommand{\bfG}{\mathbf{G}}
\newcommand{\rmH}{\mathrm{H}}
\newtheorem{dummy}{dummy}[section]
\newtheorem{lemma}[dummy]{Lemma}
\newtheorem{theorem}[dummy]{Theorem}
\newtheorem{conjecture}[dummy]{Conjecture}
\newtheorem{proposition}[dummy]{Proposition}
\theoremstyle{definition}
\newtheorem{definition}[dummy]{Definition}
\newtheorem{example}[dummy]{Example}
\newtheorem{remark}[dummy]{Remark}
\newcommand{\Sp}{\mathrm{Sp}}
\newcommand{\SO}{\mathrm{SO}}
\newcommand{\U}{\mathrm{U}}
\newcommand{\Spin}{\mathrm{Spin}}
\newcommand{\SL}{\mathrm{SL}}
\newcommand{\G}{\mathrm{G}}
\newcommand{\PGL}{\mathrm{PGL}}
\newcommand{\F}{\mathrm{F}}
\newcommand{\E}{\mathrm{E}}
\newcommand{\Fun}{\mathrm{Fun}}
\newcommand{\Cor}{\mathcal{C}\mathrm{or}}
\newcommand{\Gm}{\mathbb{G}_\mathrm{m}}
\newcommand{\Ga}{\mathbb{G}_\mathrm{a}}
\newcommand{\Rc}{\mathbb{R}-c}
\newcommand{\Cc}{\mathbb{C}-c}
\newcommand{\FT}{\mathcal{FT}}
\renewcommand{\k}{\mathbf{k}}
\newcommand{\Sm}{\mathbf{Psm}}
\newcommand{\Bor}{\mathbf{Bor}}
\newcommand{\Perf}{\mathrm{Perf}}
\newcommand{\tate}{\mathcal{T}}
\newcommand{\SHA}{\mathrm{SHA}}
\newcommand{\Sat}{\mathrm{Sat}}
\newcommand{\fd}{\mathrm{fd}}
\newcommand{\Rep}{\mathrm{Rep}}
\renewcommand{\SS}{\mathit{SS}}
\begin{document}

\title{Smith theory and geometric Hecke algebras}
\author{David Treumann}
\date{July 19, 2011}

\begin{abstract}
In 1960 Borel proved a ``localization'' result relating the
rational cohomology of a topological space $X$ to the rational
cohomology of the fixed points for a torus action on $X$.  This result
and its generalizations have many applications in Lie theory.  In
1934, P. Smith proved a similar localization result relating the mod $p$
cohomology of $X$ to the mod $p$ cohomology of the fixed points for a
$\bZ/p$-action on $X$.  In this paper we study $\bZ/p$-localization for constructible
sheaves and functions.  We show that $\bZ/p$-localization on loop groups is related
via the geometric Satake correspondence to some special homomorphisms
that exist between algebraic groups defined over a field of small characteristic.
\end{abstract}

\maketitle

\tableofcontents

\section{Introduction}

It is often possible to compare the cohomology of a space $X$ to the cohomology of the fixed points for a torus action on $X$, by a technique than can be called ``equivariant localization.''  The prototype for this family of results goes back to Borel and Leray, e.g.

\begin{theorem}[Borel 1960, {\cite[Chapter XII.6]{Bor}}]
\label{thm:1.1}
Let $X$ be a finite-dimensional space with a $\U(1)$-action.  If $H^i(X;\bQ) = 0$ for $i$ odd, then $H^i(X^{\U(1)};\bQ) = 0$ for $i$ odd as well, and
$$\sum \dim(H^i(X;\bQ)) = \sum \dim(H^i(X^{\U(1)};\bQ))$$
\end{theorem}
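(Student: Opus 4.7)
The plan is to prove this via equivariant cohomology and the Borel localization principle. Let $T = \U(1)$ and write $H^*_T(-;\bQ)$ for $T$-equivariant cohomology, defined as the ordinary cohomology of the Borel construction $X \times_T ET$. Recall $H^*(BT;\bQ) = \bQ[u]$ with $u$ in degree $2$, and $H^*_T(X;\bQ)$ is naturally a graded $\bQ[u]$-module.

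The first step is to analyze $H^*_T(X;\bQ)$ using the Serre spectral sequence of the fibration $X \to X \times_T ET \to BT$, which has $E_2^{p,q} = H^p(BT;\bQ) \otimes H^q(X;\bQ)$. The parity hypothesis forces $E_2^{p,q}$ to vanish unless both $p$ and $q$ are even, so $E_2$ is concentrated in even total degree. Since every differential $d_r$ shifts total parity by $1$, the spectral sequence degenerates at $E_2$, and $H^*_T(X;\bQ)$ is a free $\bQ[u]$-module concentrated in even total degree, with $\bQ[u]$-rank equal to $\dim_\bQ H^*(X;\bQ)$.

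The second step is the localization theorem: the restriction $H^*_T(X;\bQ) \to H^*_T(X^T;\bQ)$ becomes an isomorphism after inverting $u$. Since $X^T$ carries the trivial $T$-action, $H^*_T(X^T;\bQ) = H^*(X^T;\bQ) \otimes \bQ[u]$. The key input is that $H^*_T(X \setminus X^T;\bQ)$ is $u$-torsion; this is the main obstacle and is the reason finite-dimensionality is assumed. The standard argument covers $X \setminus X^T$ by finitely many $T$-invariant open sets on which $T$ acts with finite stabilizers (possible by compactness/finite-dimensional slice-type arguments), uses Mayer--Vietoris to reduce to this case, and observes that for such a space $Y$ one has $H^*_T(Y;\bQ) \cong H^*(Y/T;\bQ)$ which is finite-dimensional over $\bQ$ so annihilated by a power of $u$. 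Combining with the long exact sequence of the pair $(X, X^T)$ gives the localization statement. Because $H^*_T(X;\bQ)$ is free over $\bQ[u]$, the restriction map is moreover injective on the nose.

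Finally, I would extract both conclusions from this. Inverting $u$ on both sides of the isomorphism yields
\[
H^*(X^T;\bQ) \otimes_\bQ \bQ[u, u^{-1}] \;\cong\; H^*_T(X;\bQ) \otimes_{\bQ[u]} \bQ[u, u^{-1}].
\]
The right-hand side is concentrated in even total degree by Step 1, and since $u$ has even degree this forces $H^i(X^T;\bQ) = 0$ for $i$ odd. Comparing $\bQ[u, u^{-1}]$-ranks of the two sides recovers
\[
\sum_i \dim H^i(X^T;\bQ) \;=\; \operatorname{rank}_{\bQ[u]} H^*_T(X;\bQ) \;=\; \sum_i \dim H^i(X;\bQ),
\]
which completes the proof. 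The main technical obstacle is the localization step and in particular the input that $T$-spaces without fixed points have $u$-torsion equivariant cohomology; everything else is formal bookkeeping with the spectral sequence.
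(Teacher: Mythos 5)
Theorem \ref{thm:1.1} is not proved in the paper: it is cited to Borel's seminar, and the only thing the introduction says about its proof is that Borel's original argument reduces to Smith's theorem (Theorem \ref{thm:1.2}) by restricting the circle action to a large cyclic subgroup $\bZ/p \subset \U(1)$. That reduction is exactly the mechanism the paper revisits in Section \ref{sec:u1} at the level of constructible functions (Remark \ref{rem:BorSmcompat}). Your proof is correct, but it takes the other standard route to Borel localization: the Serre spectral sequence of $X \to X \times_T ET \to BT$, the parity argument for degeneration, freeness of $H^*_T(X;\bQ)$ over $\bQ[u]$, and the localization theorem that inverting $u$ identifies $H^*_T(X)$ with $H^*_T(X^T) = H^*(X^T)\otimes\bQ[u]$. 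The two approaches buy different things. Your spectral-sequence argument is self-contained within rational equivariant cohomology and makes the freeness of $H^*_T(X;\bQ)$ --- the ``equivariantly formal'' condition --- explicit, which is the form in which this result is usually quoted in Lie theory. Borel's Smith-theoretic reduction, which the paper is deliberately echoing, is more elementary in its inputs (no Borel construction, no $\bQ[u]$-module structure) and, more to the point of this paper, exposes the mod-$p$ skeleton underneath the rational statement: one replaces $\U(1)$ by $\bZ/p$ for infinitely many $p$, applies Smith's inequality mod each $p$, and recovers the integral/rational conclusion by varying $p$. That is precisely the strategy the paper abstracts and categorifies, so if you wanted your proof to mirror the paper's point of view you would prove Theorem \ref{thm:1.2} first and then deduce Theorem \ref{thm:1.1} from it. One small caution in your write-up: when you cover $X \setminus X^T$ by finitely many invariant opens with finite stabilizers, the justification that only finitely many such opens suffice, and that each has $u$-torsion equivariant cohomology, is exactly where the finite-dimensionality hypothesis does its work; it is worth stating that the orbit-type filtration is finite for a finite-dimensional space, rather than appealing loosely to compactness (which is not assumed).
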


For instance, if $G$ is a Lie group and $T$ is a maximal torus, the Theorem can be used to compute the rational cohomology of $G/T$ (or at least, its total rank).  Descendants of Borel's result (for instance \cite{AB} and \cite{GKM}) are used constantly in the deeper study of these and other spaces attached to a Lie group.

This paper concerns an important antecedent of Borel's result due to P. Smith:

\begin{theorem}[Smith 1934, \cite{S}]
\label{thm:1.2}
Let $p$ be a prime number and let $X$ be a finite-dimensional space with an action of $\bZ/p$.  If $H^i(X;\bZ/p) = 0$ for $i > 0$, then $H^i(X^{\bZ/p};\bZ/p) = 0$ for $i > 0$ as well, and
$$H^0(X^{\bZ/p};\bZ/p) = H^0(X;\bZ/p)$$
\end{theorem}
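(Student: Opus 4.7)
The strategy is the classical chain-level Smith argument, using the algebra of the group ring $R := \bF_p[\bZ/p]$. Let $\sigma$ generate $\bZ/p$, and set $\tau := 1 - \sigma$ and $\rho := 1 + \sigma + \cdots + \sigma^{p-1}$ in $R$. In characteristic $p$ one has the identities $\tau^{p-1} = \rho$ and $\tau\rho = 0$, and equivalently on any free $R$-module $F$, $\ker(\tau\colon F \to F) = \rho F$ and $\ker(\rho) = \tau F$. First reduce to the case that $X$ is a finite-dimensional $\bZ/p$-CW complex with $Y := X^{\bZ/p}$ a subcomplex (general finite-dimensional spaces are handled via \v{C}ech cohomology and an equivariant polyhedral approximation); the relative chain complex $C_*(X;\bF_p)/C_*(Y;\bF_p)$ then consists of free $R$-modules, since the cells of $X \setminus Y$ come in free $\bZ/p$-orbits.

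Introduce the two Smith subcomplexes of $C_* := C_*(X;\bF_p)$,
$$C_*^\tau := \tau C_* + C_*(Y;\bF_p), \qquad C_*^\rho := \rho C_* + C_*(Y;\bF_p).$$
The freeness statements above yield, term by term, the Smith isomorphisms
$$C_*/C_*^\tau \xrightarrow{\;\rho\;} C_*^\rho/C_*(Y;\bF_p) \quad\text{and}\quad C_*/C_*^\rho \xrightarrow{\;\tau\;} C_*^\tau/C_*(Y;\bF_p),$$
realized by multiplication by $\rho$ and $\tau$ respectively. Coupled with the two families of short exact sequences $0 \to C_*^\sigma \to C_* \to C_*/C_*^\sigma \to 0$ and $0 \to C_*(Y;\bF_p) \to C_*^\sigma \to C_*^\sigma/C_*(Y;\bF_p) \to 0$ (for $\sigma \in \{\tau,\rho\}$), this produces a system of four long exact sequences relating $H_*(X;\bF_p)$, $H_*(Y;\bF_p)$, and the auxiliary groups $H_*(C_*^\sigma)$.

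The key technical observation is that the connecting map for each ``auxiliary'' sequence of the form $0 \to C_*(Y;\bF_p) \to C_*^\sigma \to C_*^\sigma/C_*(Y;\bF_p) \to 0$ vanishes: any cycle in $C_*^\rho/C_*(Y;\bF_p)$ has, via the Smith isomorphism, a representative of the form $\rho\tilde x$ with $\tilde x \in C_*$ a lift of a cycle in $C_*/C_*^\tau$ (so that $d\tilde x \in C_*^\tau$); but $\rho$ annihilates $C_*^\tau$ (from $\rho\tau = 0$ together with $\rho$ acting as $p = 0$ on $C_*(Y;\bF_p)$), so $\rho\tilde x$ is already a cycle in $C_*$ and hence has zero boundary in $C_*(Y;\bF_p)$. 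Thus $H_*(C_*^\sigma) \cong H_*(Y;\bF_p) \oplus H_*(C_*^\sigma/C_*(Y;\bF_p))$ for both $\sigma$, and straightforward bookkeeping on the remaining pair of long exact sequences then yields the Smith inequality $\sum_i \dim_{\bF_p} H_i(Y;\bF_p) \leq \sum_i \dim_{\bF_p} H_i(X;\bF_p)$. Since $\bF_p$ is a field the same inequality holds for $H^*$.

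Combining this with the Euler characteristic congruence $\chi_{\bF_p}(Y) \equiv \chi_{\bF_p}(X) \pmod p$ (immediate from the free $\bZ/p$-orbit structure on cells of $X \setminus Y$), the hypothesis that $X$ has the mod $p$ cohomology of a point forces $\sum_i \dim H^i(Y;\bF_p) \leq 1$ and $\chi(Y) \equiv 1 \pmod p$, so the total $\bF_p$-dimension of $H^*(Y;\bF_p)$ is exactly $1$ and is concentrated in degree zero. The main obstacle, as indicated, is establishing the vanishing of the two connecting maps -- this is the heart of the mod $p$ localization phenomenon and is precisely what the algebraic identities $\tau^{p-1} = \rho$ and $\tau\rho = 0$ in $R$ buy one; once in hand, the rest is accounting.
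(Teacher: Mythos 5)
The paper cites Theorem~\ref{thm:1.2} to Smith's 1934 article and does not supply a proof (the sheaf-theoretic formalism of Section~4 recovers it as a special case of Theorem~\ref{thm:Smithpush} applied to $X \to \mathit{pt}$, but that is a generalization, not a proof of the classical statement), so there is no internal argument to compare against; I assess your proposal on its own terms. What you have written is in substance the classical Smith chain-level argument, and it is correct. Two remarks. First, your ``key technical observation'' --- the vanishing of the connecting map in $0 \to C_*(Y;\bF_p) \to C_*^\sigma \to C_*^\sigma/C_*(Y;\bF_p) \to 0$ --- is true for a more transparent reason than the one you give: $\sigma \in \{\tau,\rho\}$ annihilates $C_*(Y;\bF_p)$, and $\sigma C_*$ lies entirely in the free relative part of $C_*$, so $C_*^\sigma = \sigma C_* \oplus C_*(Y;\bF_p)$ is an \emph{internal direct sum of subcomplexes}; the short exact sequence is therefore split already at the chain level and its connecting map vanishes for formal reasons, with no need for the cycle-lifting argument. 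Once this is noticed, your two families of short exact sequences collapse to Bredon's single sequence $0 \to \sigma C_* \oplus C_*(Y;\bF_p) \to C_* \to \sigma' C_* \to 0$ with $\{\sigma,\sigma'\} = \{\tau,\rho\}$, which is the standard packaging of exactly this computation. Second, your final deduction --- that the surviving one-dimensional $H^*(Y;\bF_p)$ sits in degree zero --- needs one more line, especially when $p=2$: the bound $\sum_i \dim H^i(Y;\bF_p) \le 1$ together with $\chi(Y) \equiv 1 \pmod p$ does not by itself exclude $\chi(Y) = -1$ when $p=2$. The fix is immediate: if the single nonvanishing group lived in positive degree then $H^0(Y;\bF_p) = 0$, forcing $Y = \varnothing$ and hence $\chi(Y) = 0$, contradicting the congruence; so $Y \neq \varnothing$, $H^0(Y;\bF_p) \neq 0$, and the surviving group is $H^0$.
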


Theorem \ref{thm:1.2} appears weaker than Theorem \ref{thm:1.1}, but it is an essential ingredient in Borel's original proof (one may reduce to Theorem \ref{thm:1.2} by considering a large cyclic subgroup $\bZ/p \subset \U(1)$).  Borel's result has had an enormous impact on Lie theory.  One might hope that Smith's result would have a comparable impact on characteristic $p$ aspects of Lie theory, but to my knowledge this has not been the case.  Theorem \ref{thm:1.2} has been influential in algebraic topology and in cohomology of groups (e.g. \cite{Q}) but I do not know of many applications to representation theory.

The bread and butter of geometric representation theory is sheaf theory.  In this paper we develop ``Smith theory for sheaves'' and give an application to algebraic groups in small characteristic.  The main result of the formalism is the following---we will give only a rough version here in the Introduction.  Note that by ``sheaf'' in this paper we usually mean ``chain complex of sheaves'' or something similar.

\begin{theorem}
\label{thm:main}
If $X$ is a space with a $\bZ/p$-action, and $F$ is a $\bZ/p$-equivariant sheaf on $X$ defined over a field of characteristic $p$, then we may associate in a functorial way a sheaf $\Sm(F)$ on the fixed point set $X^{\bZ/p}$.  This assignment $F \mapsto \Sm(F)$, called the \emph{Smith operation}, commutes with all other sheaf operations, including duality, pushforward and pullback, nearby cycles, and microlocal stalks.  An equivariant version of $\Sm$ carries $G$-equivariant sheaves on $X$ to $Z_G(g)$-equivariant sheaves on $X^g$ whenever $g$ is an element of order $p$ in $G$.
\end{theorem}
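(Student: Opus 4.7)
The plan is to construct $\Sm$ as the composition of restriction to the fixed locus with a Verdier quotient that realizes ``Tate cohomology'' at the sheaf level. The guiding observation is that $\bF_p[\bZ/p] \cong \bF_p[t]/(t-1)^p$ is a local Frobenius algebra, so its stable module category (modules modulo projectives) is a triangulated category that inherits all derived functors; I would mimic this at the level of equivariant sheaves. Concretely, on any $\bZ/p$-space $Y$, let $D^b_{\bZ/p,c}(Y;\bF_p)$ denote the constructible equivariant derived category in characteristic $p$, and let $\Perf_{\bZ/p}(Y) \subset D^b_{\bZ/p,c}(Y;\bF_p)$ be the thick subcategory generated by induced objects $\mathrm{Ind}_{e}^{\bZ/p} G$ for constructible $G$. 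Set $\SHA(Y) := D^b_{\bZ/p,c}(Y;\bF_p) / \Perf_{\bZ/p}(Y)$.

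The next step is localization. Writing $i:X^{\bZ/p}\hookrightarrow X$ and $j:U\hookrightarrow X$ for the open complement, I observe that $\bZ/p$ acts freely on $U$, so every equivariant sheaf on $U$ is induced from the orbit space and hence lies in $\Perf_{\bZ/p}(U)$. The localization triangle $j_!j^*F \to F \to i_*i^*F$ then shows that $i^*F$ and $i^!F$ become canonically isomorphic in $\SHA(X^{\bZ/p})$, and I would define $\Sm(F)$ to be their common image. By construction $\Sm$ is insensitive to the behavior of $F$ away from $X^{\bZ/p}$, matching the expectation from classical Smith theory.

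For compatibility with sheaf operations, the key inputs are: (i) for equivariant $f:X\to Y$, base change around the Cartesian square formed by the inclusions $X^{\bZ/p}\hookrightarrow X$ and $Y^{\bZ/p}\hookrightarrow Y$ gives natural isomorphisms $i_Y^* f_* \cong (f^{\bZ/p})_* i_X^*$ and the three analogues; and (ii) induction from the trivial subgroup commutes with $f^*,f_!,f_*,f^!$ and with Verdier duality (using self-duality of the regular representation of $\bZ/p$). Together these imply that all six functors descend to functors between the Smith quotients. For nearby cycles and microlocal stalks the analogous statements follow because both operations are local near the fixed set and behave well with respect to induction. The equivariant refinement for $g\in G$ of order $p$ is obtained by replacing $\bZ/p$ with $\langle g\rangle$: since $Z_G(g)$ preserves $X^g$ and normalizes $\langle g\rangle$, the entire construction is $Z_G(g)$-equivariant.

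The main obstacle, in my view, is making step (i) truly uniform across all six functors and beyond. Base change and compatibility with induction are formal for the four standard functors, but duality, nearby cycles, and microlocal stalks each demand an independent argument; in particular, to even phrase ``microlocal stalks in $\SHA$'' one must first equip the Smith quotient with a suitable dg or $\infty$-categorical enhancement so that the singular support calculus makes sense there. The delicate constructibility issues -- ensuring that $\SHA$ behaves well in families, along specializations, and under the stratified Whitney conditions -- are where the bulk of the technical work should be expected.
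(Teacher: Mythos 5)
Your overall plan closely parallels the paper's: pass to a quotient category in which induced (free-on-stalks) $K[\varpi]$-module sheaves are killed, define $\Sm$ via $i^*$ at the fixed locus, and deduce commutation with the six operations by showing the relevant comparison morphisms become isomorphisms in the quotient. But the mechanism you propose in step (i) is wrong, and repairing it is precisely where the characteristic-$p$ input must enter again; it is not a mopping-up detail.

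The square with vertical map $f$, top inclusion $i_X:X^\varpi\hookrightarrow X$ and bottom inclusion $i_Y:Y^\varpi\hookrightarrow Y$, is \emph{not} Cartesian: $f^{-1}(Y^\varpi)$ contains $X^\varpi$ but is usually strictly larger (take $Y = \pt$, so $Y^\varpi = Y$ and $f^{-1}(Y^\varpi)=X$). Hence $i_Y^* f_! \to (f|_{X^\varpi})_! \, i_X^*$ is only the base-change morphism, not an isomorphism, even for $f$ proper and before any quotient is taken. What is actually true, and what the paper proves as Theorem~\ref{thm:Smithpush}, is that the \emph{cone} of this morphism lies in $\Perf$, so it becomes an isomorphism only after passing to the Tate quotient. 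The proof reduces, by proper base change and checking stalks, to $Y = \pt$, where the cone of $\Gamma_c(X;F)\to\Gamma_c(X^\varpi;F)$ is controlled by compactly supported cohomology on $X\setminus X^\varpi$; since $\varpi$ acts freely there, this is a perfect $K[\varpi]$-complex (Lemma~\ref{lem:link}). This is exactly the ingredient you invoked for $i^*\simeq i^!$, but you must invoke it again, in this stronger non-formal form, for the pushforward; it does not follow from base change plus compatibility with induction. The same remark applies to $f_*$, which the paper handles by duality, and to nearby cycles (Theorem~\ref{thm:Smithnearby}), which factor as $i^*j_*j^*$.

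A smaller gap: the single localization triangle $j_!j^*F\to F\to i_*i^*F$ does not by itself produce $i^*F\simeq i^!F$. You need the complementary triangle $i_*i^!F\to F\to j_*j^*F$, equivalently that $i^*j_*j^*F$ is perfect, and showing $j_*$ (as opposed to $j_!$) lands in $\Perf$ is again the link/free-action argument. The paper proves exactly this as Theorem~\ref{thm:link}, and then all the six-functor compatibilities funnel through it. You correctly identified free-ness of the $\varpi$-action on $X\setminus X^\varpi$ as the essential input, and your framework (quotienting on every $\varpi$-space, not just the fixed locus) is a clean packaging of the paper's; the gap is in attributing the pushforward isomorphism to a Cartesian square that does not exist, when in fact it holds only modulo perfect complexes, with Smith's lemma supplying the perfectness.
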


The catch of the Theorem is that, while $F$ might be an ordinary sheaf (or complex of sheaves) of $\bF_p$-vector spaces, the sheaf $\Sm(F)$ is not defined over $\bF_p$ but over a certain $E_\infty$-ring spectrum $\tate$, called the ``Tate spectrum.''  We do not work directly with $\tate$ in this paper (and in particular the reader does not have to know what an $E_\infty$ ring spectrum is), but it's category of modules, which is easy to describe: it is the Verdier quotient of the category of bounded complexes of finitely generated $\bF_p[\bZ/p]$-modules by the category of bounded complexes of finitely generated free $\bF_p[\bZ/p]$-modules.   

To explain why the catch arises it is useful to consider what happens at the level of Grothendieck groups, i.e. to develop a ``Smith theory for functions.''  Let $f$ be a constructible function on $X$---for instance, $f$ might arise from a sheaf $F$ of $\bF_p$-modules by setting $f(x)$ equal to the Euler characteristic of the stalk of $F$ at $x$.  If $\Sm$ is to commute with all operations, then it commutes with taking stalks, giving us
\begin{equation}
\label{eq:smfx}
\Sm(f)(x) = f(x)
\end{equation}
whenever $x$ is a fixed point.  It should furthermore commute with taking global sections which at the level of functions is ``integration with respect to Euler measure'' (see Definition \ref{def:eulerint}), giving us
\begin{equation}
\label{eq:intXf}
\int_X f = \int_{X^{\bZ/p}} f
\end{equation}
However, Equation \ref{eq:intXf} only holds if we reduce both sides mod $p$, in which case it's a consequence of the fact that the Euler characteristic of a space with a free $\bZ/p$-action is divisible by $p$.

In other words, to get a good Smith theory at the level of functions we have to reduce the values of those functions mod $p$.  Working with the funny coefficients $\tate$ is analogous to this reduction in the following sense: while the Grothendieck group of the category of $\bF_p$-vector spaces  is $\bZ$, the Grothendieck group of the category of $\tate$-modules is $\bZ/p$.  The projection $\bZ \to \bZ/p$ is realized by a natural ``change of coefficients'' functor from $\bF_p$-vector spaces to $\cT$-modules.

\subsection{Hecke algebras and the Satake isomorphism}

Let $G$ be a Lie group acting on a space $X$.  In reasonable situations (for instance if $X$ and the action are real subanalytic) we may attach a ring to $X$ and $G$, called the Hecke algebra of the action.  The additive structure of the algebra is the group of $G$-invariant functions on $X \times X$, and the multiplication is given by
$$f_1 \ast f_2 (x,y) = \int f_1(x,z) \cdot f_2(z,y) dz$$
where the right-hand side again denotes the Euler characteristic integral.  A similar construction makes sense on a categorical level, yielding monoidal ``Hecke  categories'' that we discuss in Section \ref{sec:heckecat}.

We give an application of Smith theory by considering the spherical Hecke algebra of Satake, or rather the loop-group analog considered by Beilinson-Drinfeld and others.  Let $G$ be a compact group, $LG$ the space of free loops $\alpha:S^1 \to G$, and $\Omega G$ the space of based loops $\beta$ with $\beta(1) = 1$. 

\begin{remark}
As these are infinite-dimensional spaces, we will have to take some care to construct Hecke algebras.  In particular we will work with the usual affine Grassmannian model of $\Omega G$, see Section \ref{sec:ind} for details.
\end{remark}

The spherical Hecke algebra attached to $G$ is a group of $LG$-invariant functions on $\Omega G \times \Omega G$, where $LG$ acts on $\Omega G$ by
$$(\alpha \cdot \beta)(t) = \alpha(t) \beta(t) \alpha(1)^{-1}$$

The Satake isomorphism is an identification
$$\SHA_G \cong \Rep(G^\vee)$$
where the right hand side denotes the Grothendieck ring of representations of Langlands dual group $G^\vee$ to $G$.  

An element $g \in G$ acts by conjugation on $LG$ and $\Omega G$, and the fixed points are $L(Z_G(g))$ and $\Omega(Z_G(g))$---the free and based loop spaces of the centralizer of $g$.  In case $g$ has prime order $p$, Smith theory for functions provides a map
$$\Sm:\SHA_G \otimes_\bZ \bF_p \to \SHA_{Z_G(g)} \otimes_\bZ \bF_p$$
This map simply restricts an $\bF_p$-valued function on $\Omega G \times \Omega G$ to the fixed points $\Omega Z_G(g) \times \Omega Z_G(g)$.  Since this operation commutes with the Euler integral, it is a homomorphism of algebras.

It is natural to search for a representation-theoretic meaning of the corresponding homomorphism
$$\Sm:\Rep(G^\vee) \otimes_{\bZ} \bF_p \to \Rep(Z_G(g)^\vee) \otimes_{\bZ} \bF_p$$
The group $Z_G(g)^\vee$ is called an ``endoscopic group'' for $G^\vee$.  It often happens that $Z_G(g)$ is a Levi subgroup of $G^\vee$, in which case $Z_G(g)^\vee$ is a Levi subgroup of $G^\vee$ and the map $\Sm$ is given by restriction of representations.  However the endoscopic group is not in general a subgroup---for instance we may realize $Z = \Sp(2n) \times \Sp(2m)$ as the centralizer of an element of order 2 in $G = \Sp(2n+2m)$, but there is no way to include the subgroup $\SO(2n+1) \times \SO(2m+1)$ into $\SO(2n+2m+1)$.

We make two observations:
\begin{enumerate}
\item If we regard $G^\vee$ as an algebraic group over the field $K$, the representation ring $\Rep(G^\vee)$ is not sensitive to $K$.
\item There \emph{is} an inclusion of $\SO(2n+1)\times \SO(2m+1)$ into $\SO(2n+2m+1)$ so long as $K$ has characteristic 2.  See Section \ref{subsubsec:Cn}.
\end{enumerate}
More generally we prove the following Theorem:

\begin{theorem}
\label{thm:1.4}
Let $G$ be a compact simply connected simple Lie group, and let $g \in G$ be an element of order $p$ whose centralizer $Z_G(g)$ is semisimple.  Then the endoscopic group $Z_G(g)^\vee$ injects into $G^\vee$ when these groups are regarded as algebraic groups of characteristic $p$.  The restriction homomorphism $\Rep(G^\vee) \to \Rep(Z_G(g)^\vee)$ and the Smith homomorphism $\SHA_G \otimes_\bZ \bF_p \to \SHA_{Z_G(g)} \otimes_\bZ \bF_p$ are compatible with each other under Satake.
\end{theorem}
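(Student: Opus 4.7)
The strategy is to derive the theorem from the sheaf-theoretic Smith operation of Theorem \ref{thm:main} via the geometric Satake equivalence. First I would apply the equivariant version of $\Sm$ to the setting where $g$ acts by conjugation on $\Omega G$, with fixed point set $\Omega Z_G(g)$ (see the Remark in Section \ref{sec:ind} for how to make sense of this for the ind-scheme model). This yields a functor from $LG$-equivariant sheaves on $\Omega G$ to $LZ_G(g)$-equivariant sheaves on $\Omega Z_G(g)$, with coefficients now in $\tate$-modules. Because $\Sm$ commutes with pullback, pushforward and tensor product, it is monoidal with respect to convolution, and hence restricts to a tensor functor $\Sm: \Sat_G \to \Sat_{Z_G(g)}$ between the Satake subcategories.

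By the geometric Satake equivalence and its analog for $\tate$-coefficients, such a tensor functor corresponds by Tannakian reconstruction to a homomorphism $\varphi: Z_G(g)^\vee \to G^\vee$ of group schemes over the coefficient ring, which at the level of $\pi_0$ is $\bF_p$. Passing to Grothendieck rings, the induced map $\Rep(G^\vee) \to \Rep(Z_G(g)^\vee)$ is by construction the restriction along $\varphi$, and under Satake it matches the Smith homomorphism $\SHA_G \otimes \bF_p \to \SHA_{Z_G(g)} \otimes \bF_p$ because the Hecke-algebra Smith map is itself the Grothendieck-group shadow of the sheaf-level Smith functor. So once $\varphi$ has been constructed and shown to be a closed embedding, the compatibility assertion is automatic.

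The remaining task, and the main obstacle, is to show that $\varphi$ is an injection and to identify it with the exceptional closed embeddings of the type exhibited in the introduction (e.g.\ $\SO(2n+1)\times\SO(2m+1)\hookrightarrow\SO(2n+2m+1)$ in characteristic $2$). I would proceed by case analysis based on Borel--de Siebenthal: for $G$ compact, simply connected, and simple, the conjugacy classes of elements $g$ of prime order $p$ with $Z_G(g)$ semisimple correspond to the nodes of coefficient $p$ in the extended Dynkin diagram of $G$. For each resulting pair $(G, Z_G(g))$ one writes down the dual root data of $G^\vee$ and $Z_G(g)^\vee$ and exhibits an explicit closed embedding $Z_G(g)^\vee \hookrightarrow G^\vee$ in characteristic $p$, exploiting the fact that in small characteristic the distinction between long and short coroots partially collapses and extra isogenies become available.

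Finally, I would match the abstract $\varphi$ coming from Smith theory with the explicit case-by-case embedding. The cleanest way is to compute $\Sm$ on a set of standard perverse sheaves, namely the IC sheaves of $LG$-orbits corresponding to minuscule or quasi-minuscule coweights, and to read off the highest weights of the resulting $Z_G(g)^\vee$-representations; these weights should recover the restrictions of the fundamental representations of $G^\vee$ along the candidate embedding. The delicate cases are exactly those where exceptional isogenies occur: types $B_n$, $C_n$, $F_4$ at $p=2$ and type $G_2$ at $p=3$. Verifying the match in these cases will be the technical heart of the argument, but once it is done, injectivity of $\varphi$ follows because the explicit embedding is manifestly a closed embedding.
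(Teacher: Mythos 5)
Your strategy is precisely the ``Tannakian'' route that the paper raises in Section~\ref{sec:heckecat} \emph{only to explain why it cannot (yet) be carried out}. The step where you invoke ``the geometric Satake equivalence and its analog for $\tate$-coefficients'' to get a homomorphism $\varphi:Z_G(g)^\vee\to G^\vee$ by Tannakian reconstruction does not go through: Tannakian reconstruction operates on the abelian tensor category of perverse sheaves, and by Proposition~\ref{prop:periodic} the category $\Sat(Z_G(g);\tate)$ admits no $t$-structure whatsoever (the shift-by-2 functor is naturally isomorphic to the identity), so there is no perverse heart and no Tannakian category on the target side. This is exactly the obstruction the paper flags explicitly: ``with a better understanding of how $\Sm$ interacts with the theory of perverse sheaves, one might be able to give a Smith-theoretic construction of the dotted arrow, and presumably standard Tannakian considerations could then be used to deduce Theorem~\ref{thm:1.4}.'' That ``better understanding'' is the content of the Conjecture in Section~\ref{sec:conjecture}, which is left open. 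Your argument silently assumes it.

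What the paper actually does is more elementary and avoids the geometric Satake machinery entirely. It observes (Remark~\ref{rem:autom} and Lemma~\ref{lem:37}) that once one has an injection $Z_G(\varpi)^\vee\hookrightarrow G^\vee$ of groups of the same rank, assertions (2) and (3) of Theorem~\ref{thm:centralizer} are formal: both the restriction map $\Rep(G^\vee)\to\Rep(Z_G(\varpi)^\vee)$ and the Smith map $\SHA_{G,\k}\to\SHA_{Z_G(\varpi),\k}$ are, after the classical (function-level) Satake isomorphism of Section~\ref{sec:31}, identified with the inclusion $\bZ[X^*(T)]^{W_{G}}\hookrightarrow\bZ[X^*(T)]^{W_{Z_G(\varpi)}}$, and these two inclusions agree because the Weyl group embeddings agree. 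So the compatibility assertion is a consequence of the injection, not something to be checked separately against a sheaf-level computation. The injection itself is proved by case analysis in Section~\ref{sec:33} (using quadratic-form tricks in characteristic $2$ for types B, C, and the Liebeck--Seitz classification plus an ad hoc argument for the exceptional $\F_4$ case). You correctly anticipate that a case analysis is needed, but your plan to ``read off highest weights of $\Sm$ applied to minuscule IC sheaves'' to \emph{identify} $\varphi$ with the explicit embedding is both circular (you would need the Conjecture to make sense of $\Sm$ of a perverse sheaf as a representation) and unnecessary once Lemma~\ref{lem:37} is in hand.

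In short: keep the case-by-case construction of the embedding, drop the Tannakian reconstruction step (or else you must first prove the paper's Conjecture), and replace the ``match $\varphi$ against explicit embeddings by computing on IC sheaves'' step with the Weyl-invariants argument of Lemma~\ref{lem:37}, which gives the compatibility for free.
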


\begin{remark}
In particular, there is a natural lift of the Smith operator $\SHA_G \otimes_{\bZ} \bF_p \to \SHA_{Z_G(g)} \otimes_\bZ \bF_p$ to $\SHA_G \to \SHA_{Z_G(g)}$.  The Smith homomorphism does nothing more than restrict an $\bF_p$-valued function on $\Omega G \times \Omega G$ to the subset $\Omega Z_G(g) \times \Omega Z_G(g)$, but the lift to $\bZ$-valued functions is necessarily more intricate.  I do not know how to interpret this lift geometrically.
\end{remark}

\begin{remark}
It's natural to place this result in the context of results of Borel-de Siebenthal and Kac \cite{bds,kac}.  Let $G$ be a connected complex reductive group.  Then 
\begin{enumerate}
\item If $H \subset G$ is a connected subgroup of the same rank of $G$, then $H = Z_G(Z(H))$, i.e. $H$ is the centralizer of its center.
\item If $H \subset G$ is furthermore maximal among subgroups of full rank, then $H$ is the centralizer of an element of prime order.
\item Up to conjugacy, the elements of $G$ of order $k$ whose centralizer is semisimple are in one-to-one correspondence with simple roots whose coefficient in the maximal root of $G$ is equal to $k$.  
\end{enumerate}
Results (1) and (2) do not hold in characterstics 2 and 3.  In those characteristics maximal semisimple subgroups have been classified by Liebeck and Seitz \cite{ting}.  According to this classification, every such subgroup arises in the manner of Theorem \ref{thm:1.4}.  The converse is almost true---with the exception of a single conjugacy class of order 2 in $\F_4$, whenever $Z_G(g) \subset G$ is maximal, the subgroup $Z_G(g)^\vee \subset G^\vee$ is also maximal.
\end{remark}

\subsection{Hecke categories and the geometric Satake correspondence of Mirkovi{\'c}-Vilonen}
\label{sec:heckecat}

In the end, our proof of Theorem \ref{thm:1.4} is a case-by-case analysis.  But let us speculate about an alternative ``Tannakian'' proof.  Let $R$ be a commutative ring and let $\Sat(G,R)$ denote the ``$R$-linear Satake category''---this is the triangulated category of suitably constructible $LG$-equivariant sheaves of $R$-modules on $\Omega G \times \Omega G$.  $\Sat(G,R)$ is a categorification of the spherical Hecke algebra, with a monoidal structure that lifts the algebra structure on $\SHA_G$.  There is a second ``fusion'' product on $\Sat(G,R)$ as well, exhibiting it as a symmetric monoidal category.  A theorem of Mirkovi{\'c} and Vilonen \cite{MV} (following Lusztig, Beilinson-Drinfeld, and Ginzburg in case $R = \bC$) identifies the abelian subcategory $\cP(G,R) \subset \Sat(G,R)$ of perverse sheaves with the tensor category of representations of the split $R$-form of $G^\vee$.

\begin{remark}
We have called $\Sat(G,R)$ a ``symmetric monoidal category,'' but that is somewhat misleading.  We are regarding $\Sat(G,R)$ as a triangulated category in the sense of Verdier, but the more natural object is a certain stable $\infty$-category whose homotopy category is $\Sat(G,R)$.  In the $\infty$-categorical world, there is a hierarchy of commutativity constraints $E_2,E_3,\ldots,E_\infty$ on monoidal structures, and the one usually considered on $\Sat(G,R)$ is only $E_3$, not $E_\infty$, at the stable $\infty$-level.  The difference between $E_3$ and $E_\infty$ monoidal structures vanishes when we restrict attention to the subcategory $\cP$.
\end{remark}

There is a good version of the monoidal category $\Sat(G,R)$ when $R = \tate$ as well.  The categorical version of our Smith theory, Theorem \ref{thm:main}, gives us a functor
$$\Sm:\Sat(G,\bF_p) \to \Sat(Z_G(g),\tate)$$
As $\Sm$ commutes with all operations, including those used to define the convolution and fusion products on $\Sat(G,\bF_p)$ and $\Sat(Z_G(g),\tate)$, this can be shown to be a symmetric (or better, per the Remark, an $E_3$) monoidal functor.

There is no perverse $t$-structure on $\Sat(Z_G(g),\tate)$.  In fact, there can be no $t$-structure on $\Sat(Z_G(g),\tate)$ at all, for in the category of $\tate$-modules the identity functor is isomorphic to the shift-by-two functor.  However, we can ``extend coefficients'' from $\bF_p$ to $\cT$, which gives us a functor
$$\otimes_{\bF_p} \cT: \cP(Z_G(g);\bF_p) \to \Sat(Z_G(g);\tate)$$
As $\cP(Z_G(g);\bF_p)$ is equivalent to the category of representations of the $\bF_p$-form of $Z_G(g)^\vee$.  A consequence of Theorem \ref{thm:1.4} is that we may fill in the dotted arrow in the diagram
$$\xymatrix{
& \cP(Z_G(g);\bF_p) \ar[d]^{\otimes_{\bF_p} \cT} \\
\cP(G;\bF_p) \ar[r]_{\Sm\quad} \ar@{-->}[ur] & \Sat(Z_G(g);\cT)
}$$
Under the equivalence of Mirkovi{\'c} and Vilonen, the dotted arrow corresponds to the restriction functor of the inclusion $Z_G(g)^\vee \to G^\vee$ that exists over $\bF_p$.  With a better understanding of how $\Sm$ interacts with the theory of perverse sheaves, one might be able to give a Smith-theoretic construction of the dotted arrow, and presumably standard Tannakian considerations could then be used to deduce Theorem \ref{thm:1.4}.  We make a conjecture in Section \ref{sec:conjecture} along these lines.

\subsection{Notation and conventions}

To get a good theory of constructible functions and sheaves, we will work with real subanalytic sets.  We refer to \cite{KS} for the basic theory of subanalytic geometry.  We also often work with complex algebraic varieties, which we regard in their usual, locally compact and Hausdorff topology.  We will let $\chi_c$ denote the compactly supported Euler characteristic of a subanalytic or complex algebraic set, which is always well-defined.

The symbol $p$ always denotes a prime number.  We let $\k$ be a commutative ring (usually $\bZ$ or $\bZ/p$), and $K$ an algebraically closed field (usually $\overline{\bF_p}$).

If $G$ is a group, we write $Z_G(g)$ for the centralizer of an element $g \in G$.  The symbol $\varpi$ will usually denote a group of order $p$, and we sometimes write $Z_G(\varpi)$ instead of $Z_G(g)$ if $g$ has order $p$.

\section{Constructible functions and Hecke algebras}

Let $X$ be a real subanalytic set.  A function $f:X \to \k$ is called constructible if it is constant along the strata of a real subanalytic stratification of $X$, and zero away from a finite union of strata.  Write $\Fun_{\Rc}(X,\k)$ for the module of $\k$-valued functions on $X$ that are constructible with respect to a real subanalytic stratification of $X$.  If $X$ has a complex algebraic structure write $\Fun_{\Cc}(X,\k) \subset \Fun_{\Rc}(X,\k)$ for the $\k$-valued functions that are constructible with respect to a complex algebraic stratification of $X$.  When it is clear from context whether we are working in the real or complex setting, and which ring $\k$ we are considering, we will often write simply $\Fun(X)$.

\begin{definition}
\label{def:eulerint}
Let $\int:\Fun_{\Rc}(X,\k) \to \k$ denote the operator that takes a function $f$ to 
$$\sum_{i \in \k} \chi_c(f^{-1}(i)) \cdot i$$
Here $\chi_c$ denotes compactly-supported Euler characteristic.  
\end{definition}

\begin{remark}
Note that if $f$ is in $\Fun_{\Cc}(X,\k)$, then we have the alternative formula
$$\int f = \sum_{i \in \k} \chi(f^{-1}(i)) \cdot i$$
because of the relation $H^i_c(X) \cong H^{2n-i}(X)$ when $X$ is a smooth affine variety.
\end{remark}

If $u:X \to Y$ is a subanalytic (resp. complex algebraic) map, we define operations $u^*:\Fun(Y) \to \Fun(X)$ and $u_!:\Fun(X) \to \Fun(Y)$ by the formulas
$$
\begin{array}{rcl}
u^*(f)(x) & = &  f(u(x)) \\
u_!(f)(x) & = & \int_{u^{-1}(x)} f\vert_{u^{-1}(x)}
\end{array}
$$

We have, essentially by definition, the fundamental ``base-change'' relation:

\begin{proposition}
Suppose that the square
$$
\xymatrix{
X \ar[r]^v \ar[d]_{u'} & Y \ar[d]^u \\
Z \ar[r]_{v'} & W
}
$$
is Cartesian.  Then the operators $(v')^* \circ u_!:\Fun(Y) \to \Fun(Z)$ and $u'_! \circ v^*:\Fun(Y) \to \Fun(Z)$ are the same. 
\end{proposition}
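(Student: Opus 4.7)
The plan is to verify the identity pointwise: fix $z \in Z$ and $f \in \Fun(Y)$, and compare $((v')^* u_! f)(z)$ with $(u'_! v^* f)(z)$.

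By the definitions of $u^*$ and $u_!$,
$$((v')^* u_! f)(z) = (u_! f)(v'(z)) = \int_{u^{-1}(v'(z))} f\bigl|_{u^{-1}(v'(z))},$$
while
$$(u'_! v^* f)(z) = \int_{(u')^{-1}(z)} (v^* f)\bigl|_{(u')^{-1}(z)} = \int_{(u')^{-1}(z)} (f\circ v)\bigl|_{(u')^{-1}(z)}.$$
So I just need the two Euler integrals on the right to agree.

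The Cartesian property of the square means that $v$ restricts to a subanalytic (respectively, complex algebraic) isomorphism $v_z : (u')^{-1}(z) \xrightarrow{\sim} u^{-1}(v'(z))$: indeed, for $x \in (u')^{-1}(z)$ one has $u(v(x)) = v'(u'(x)) = v'(z)$, and the universal property of the fibered product provides an inverse. Under this isomorphism the function $f\bigl|_{u^{-1}(v'(z))}$ pulls back to $(f\circ v)\bigl|_{(u')^{-1}(z)}$ by construction.

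The final step is to observe that the Euler integral from Definition~\ref{def:eulerint} depends only on the subanalytic (or algebraic) isomorphism class of the pair (set, function): if $\varphi: A \xrightarrow{\sim} B$ is a subanalytic isomorphism and $g \in \Fun_{\Rc}(B,\k)$, then for each $i \in \k$ the map $\varphi$ restricts to a subanalytic isomorphism $g^{-1}(i) \cong (g\circ\varphi)^{-1}(i)$, and compactly supported Euler characteristic is invariant under such isomorphisms. Applying this to $\varphi = v_z$ gives the equality of the two integrals, completing the proof. There is no real obstacle here; the content of the proposition is simply that the Euler integral along a fiber is topologically well-defined, and the Cartesian hypothesis matches up the fibers on the two sides.
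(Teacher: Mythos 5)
Your proof is correct and is the natural unpacking of the definitions; the paper itself gives no proof, remarking only that the statement holds ``essentially by definition.'' The two ingredients you isolate --- that the Cartesian hypothesis makes $v$ restrict to a subanalytic isomorphism $(u')^{-1}(z) \xrightarrow{\sim} u^{-1}(v'(z))$, and that the Euler integral of Definition~\ref{def:eulerint} is invariant under subanalytic isomorphism because $\chi_c$ is --- are precisely what the paper is implicitly appealing to.
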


\begin{remark}
\label{rem:cor1}
We can reformulate this relation in the language of categories.  Let $\Cor_\bR$ (resp. $\Cor_\bC$) denote the category whose objects are real subanalytic spaces and whose morphisms are ``correspondences'' $u = (\overleftarrow{u},\overrightarrow{u})$, i.e. diagrams of the form
$$X \stackrel{\overleftarrow{u}}{\leftarrow} U \stackrel{\overrightarrow{u}}{\rightarrow} Y$$
The composite of $u:X \to Y$ with $v:Y \to Z$ is given by the diagram
$$X \stackrel{\overleftarrow{u}\circ \mathrm{proj}}{\longleftarrow} U \times_{\overrightarrow{u},\overleftarrow{v}} V \stackrel{\overrightarrow{v} \circ \mathrm{proj}}{\longrightarrow} Z$$

A correspondence $u = (\overleftarrow{u},\overrightarrow{u})$ determines an operation $\overrightarrow{u}_! \overleftarrow{u}^*:\Fun(X) \to\Fun(Y)$.  The base-change relation is equivalent to the statement that $X \mapsto \Fun(X)$, $u \mapsto \overrightarrow{u}_! \overleftarrow{u}^*$ is a functor from $\Cor$ to the category of $\k$-modules.
\end{remark}

\subsection{Hecke algebras}
\label{sec:heckealgebras}

Let $G$ be a Lie group (resp. complex algebraic group) and suppose that it acts subanalytically (resp. algebraically) on $X$.  We define $\Fun_G(X)$ to be the $\k$-submodule of $\Fun(X)$ consisting of constructible functions that are constant along $G$-orbits.  If $u:X \to Y$ is a $G$-equivariant map then the operations $u^*$ and $u_!$ carry the $G$-invariant submodules into each other.  

We may use these operations to define a natural ring structure on $\Fun_G(X \times X)$, where $G$ acts by the diagonal action.  Specifically, given $f_1$ and $f_2$ in $\Fun_G(X \times X)$, we define
$$(f_1 \ast f_2)(x,y) = \int_z f_1(x,z) f_2(z,y)$$
It is clear that this new function is $G$-invariant.

\begin{definition}
The \emph{Hecke algebra} associated to a $G$-space is the algebra $\Fun_G(X \times X)$ just described.
\end{definition}

\begin{example}
\label{ex:doublecosets}
Let $X$ be a homogeneous space for $G$, say $X = G/H$.  Let $H \times H$ act on $G$ by $(h_1,h_2) \cdot g = h_1 g h_2^{-1}$.   The map $\Fun_G(X \times X) \to \Fun_{H \times H}(G)$ given by sending the function $f(g_1H,g_2H)$ to the function $f(1H,gH)$ is an isomorphism.  This identifies $\Fun_G(X \times X)$ with the group of functions on $G$ that are constant on $H-H$ cosets.  In particular, suppose $G$ is finite and $H$ is the trivial subgroup.  Then $\Fun_G(G \times G;\k)$ is naturally equivalent to the group ring of $G$ (though note that there are two such natural equivalences, which are exchanged by the involution $g \mapsto g^{-1}$ of the group ring).  
\end{example}

\subsection{The Smith operator}

Suppose now that $\k$ has characteristic $p > 0$ and let $\varpi$ be the group $\bZ/p$.  Let $X$ be a space with a $\varpi$ action, and let $X^\varpi$ denote the fixed points.  We define $\Fun_\varpi(X) \subset \Fun(X)$ to be the submodule of maps that are constant on $\varpi$-orbits.  The Smith operator is the map
$$\Sm:\Fun_\varpi(X) \to \Fun(X^\varpi):f \mapsto f\vert_{X^\varpi}$$

\begin{remark}
It is worth emphasizing that on functions $\Sm$ does nothing more than restrict $f:X \to \bF_p$ to the set of $\varpi$-fixed points.
\end{remark}

Suppose $u:X \to Y$ is a $\varpi$-equivariant map, and by abuse of notation write $u$ also for the map $X^\varpi \to Y^\varpi$.  We clearly have $u^*(\Sm(f)) = \Sm(u^*(f))$.  More interestingly, since $\k$ has characteristic $p$, we also have $u_!(\Sm(f)) = \Sm(u_!(f))$.

\begin{proposition}
\label{prop:Smith}
Let $X$ and $Y$ be $\varpi = \bZ/p$-spaces and let $u:X \to Y$ be a $\varpi$-equivariant map.  Let $X^\varpi$ and $Y^\varpi$ denote the fixed points.  Suppose that $\k$ has characteristic $p$.  Then the square
$$
\xymatrix{
\Fun_\varpi(X,\k) \ar[r]^{\Sm} 
\ar[d]_{u_!} 
& \Fun(X^\varpi,\k) 
\ar[d]^{u_!} \\
\Fun_\varpi(Y,\k) \ar[r]_{\Sm} & \Fun(Y^\varpi,\k)
}$$
is commutative
\end{proposition}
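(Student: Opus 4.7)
The plan is to reduce the statement to a pointwise calculation on $Y^\varpi$ and then to the following basic lemma: if $W$ is a real subanalytic set with a free $\varpi$-action, then $\chi_c(W) \equiv 0 \pmod p$, so that more generally any $\varpi$-invariant constructible function $g$ on a $\varpi$-space $Z$ satisfies
\begin{equation*}
\int_Z g \;=\; \int_{Z^\varpi} g\vert_{Z^\varpi} \quad \text{in } \k.
\end{equation*}

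To reduce to this lemma, fix $y \in Y^\varpi$ and evaluate both sides of the asserted commutativity at $y$. Since $y$ is $\varpi$-fixed, the fiber $Z := u^{-1}(y)$ is a $\varpi$-invariant subanalytic subset of $X$, and its fixed points are exactly $Z^\varpi = u^{-1}(y) \cap X^\varpi$. Unwinding the definitions, $(\Sm \circ u_!)(f)(y) = \int_Z f\vert_Z$, whereas $(u_! \circ \Sm)(f)(y) = \int_{Z^\varpi} f\vert_{Z^\varpi}$. So the proposition follows once we verify the displayed identity applied to $g = f\vert_Z$.

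To prove the identity, decompose $Z = Z^\varpi \sqcup (Z \setminus Z^\varpi)$. Additivity of Euler integration with respect to a locally closed subanalytic decomposition gives
\begin{equation*}
\int_Z g \;=\; \int_{Z^\varpi} g\vert_{Z^\varpi} \;+\; \int_{Z \setminus Z^\varpi} g\vert_{Z \setminus Z^\varpi},
\end{equation*}
so it suffices to show that the second summand vanishes in $\k$. Writing it out using Definition \ref{def:eulerint},
\begin{equation*}
\int_{Z \setminus Z^\varpi} g\vert_{Z \setminus Z^\varpi} \;=\; \sum_{i \in \k} \chi_c\!\left(g^{-1}(i) \cap (Z \setminus Z^\varpi)\right) \cdot i,
\end{equation*}
and because $g$ is $\varpi$-invariant, each level set $g^{-1}(i) \cap (Z \setminus Z^\varpi)$ is a $\varpi$-invariant subanalytic subset on which $\varpi$ acts freely. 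The basic lemma then forces each coefficient to be divisible by $p$, and hence zero in $\k$.

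The only real content is the basic lemma itself. The approach I would take: the quotient $W/\varpi$ of a subanalytic $\varpi$-space with free action exists as a subanalytic set, and $W \to W/\varpi$ is an unramified $p$-sheeted covering, so one obtains $\chi_c(W) = p \cdot \chi_c(W/\varpi)$ by the multiplicativity of $\chi_c$ for finite coverings (which itself follows from a $\varpi$-equivariant subanalytic triangulation, where every simplex in $W$ off the fixed point set occurs in a free $\varpi$-orbit). This is the step requiring the most care -- existence of a $\varpi$-equivariant subanalytic triangulation (or an equivalent stratification argument from \cite{KS}) -- but it is standard, and once in hand the rest of the argument is bookkeeping.
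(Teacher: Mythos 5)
Your proof is correct and takes essentially the same route as the paper: reduce to a single fiber (the paper does this by invoking the base-change proposition and taking $Y = \pt$, you do it by evaluating directly at $y \in Y^\varpi$, which is the same reduction), then observe that the difference of integrals is a sum of terms $\chi_c$ of free $\varpi$-sets, each divisible by $p$ by an equivariant triangulation argument. The only cosmetic difference is that you spell out the decomposition $Z = Z^\varpi \sqcup (Z \setminus Z^\varpi)$ and the bookkeeping with level sets, whereas the paper compresses this to the statement that $\chi_c(f^{-1}(t)) - \chi_c(f^{-1}(t)^\varpi) \equiv 0 \pmod p$.
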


\begin{proof}
By the base-change relation, we may assume $Y$ is a point, i.e. we only have to show that
$$\int_{X^\varpi} \Sm(f) = \int_X f$$
when $\k$ has characteristic $p$.  By definition it suffices to show that 
$$\chi_c(f^{-1}(t)) - \chi_c(f^{-1}(t)^\varpi) = 0 \text{ mod }p$$
for every $t$.  This follows from the fact that each $f^{-1}(t) - f^{-1}(t)^\varpi$ is triangulable compatible with the free $\varpi$-action.
\end{proof}

\begin{remark}
\label{rem:cor2}
Let $\Cor_\varpi$ denote the category of $\varpi$-spaces and $\varpi$-equivariant correspondences, defined in the evident way.  The compatibility of the Smith operator with the operators $u_!$ and $u^*$ is equivalent to the statement that $\Sm$ defines a natural transformation between the functors of Remark \ref{rem:cor1}
$$
\xymatrix{
\Cor_\varpi \ar[rr]^{X \mapsto X^\varpi} \ar[dr]_{\Fun_\varpi} & & \Cor \ar[dl]^{\Fun} \\
& \k\text{-mod} &
}
$$
\end{remark}

\begin{remark}
\label{rem:pgroup}
We may generalize the Smith operator and Proposition \ref{prop:Smith} to the case of finite $p$-groups.  Let $\varrho$ be a $p$-group with $p^n$ elements and let $\Sm_{\varrho}$ denote the restriction map $\Fun_{\varrho}(X;\k) \to \Fun(X^\varrho;k)$.  We may always find a normal subgroup $\varrho' \subset \varrho$ of order $p^{n-1}$, in which case $\Sm_\varrho$ factors as
$$\Fun_\varrho(X;\k) \stackrel{\Sm_{\varrho'}}{\longrightarrow} \Fun_{\varrho/\varrho'}(X^{\varrho'};\k) \stackrel{\Sm_{\varrho/\varrho'}}{\longrightarrow} \Fun(X^\varrho;\k)$$.  By induction on $n$, we conclude that 
$$\int \Sm_{\varrho} f = \int f$$
when $f \in \Fun_{\varrho}(X;\k)$.
\end{remark}

\subsection{The Borel operator: localization for torus actions}
\label{sec:u1}

We can cast some torus-localization results in similar terms, and use the Smith operator to deduce them.  Let $T$ be a group isomorphic to $\U(1)^{\times n}$ if we are working in the real subanalytic setting or $(\bC^*)^{\times n}$ if we are working in the complex algebraic setting.  
Suppose that $T$ acts subanalytically or complex algebraically on $X$.  We have a restriction map
$$\Fun_T(X;\bZ) \to \Fun(X^T;\bZ): f \mapsto f\vert_{X^T}$$
We refer to this map as the \emph{Borel operator} $\Bor$.

\begin{remark}
\label{rem:BorSmcompat}
We have a basic compatibility between the Borel and Smith operators.  For each $p$, we may find a  $p$-group $\varrho$ contained in $T$ with $X^\varrho = X^T$.  In that case it is clear that reducing the Borel map for $T$ mod $p$ yields the Smith map for $\varrho$.
\end{remark}

\begin{proposition}
Suppose $T$ acts on both $X$ and $Y$ and that $f:X \to Y$ is $T$-equivariant.  The square
$$
\xymatrix{
\Fun_T(X,\bZ) \ar[r]^{\mathbf{Bor}} 
\ar[d]_{u_!} 
& \Fun(X^T,\bZ) 
\ar[d]^{u_!} \\
\Fun_T(Y,\bZ) \ar[r]_{\mathbf{Bor}} & \Fun(Y^T,\bZ)
}$$
commutes.
\end{proposition}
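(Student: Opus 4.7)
The plan is to leverage the compatibility between the Borel and Smith operators pointed out in Remark \ref{rem:BorSmcompat}. First I would apply the base-change relation (exactly as in the proof of Proposition \ref{prop:Smith}) to reduce to the case $Y = \pt$, so the claim becomes the integral identity
$$\int_X f = \int_{X^T} f\big|_{X^T}$$
in $\bZ$ for every $T$-invariant constructible $\bZ$-valued function $f$. Writing $f$ as a $\bZ$-linear combination of indicator functions of its (finitely many, automatically $T$-invariant) nonzero level sets, it suffices to show $\chi_c(Z) = \chi_c(Z^T)$ for each such level set $Z = f^{-1}(i)$.

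For each prime $p$, Remark \ref{rem:BorSmcompat} provides a finite $p$-subgroup $\varrho \subset T$ with $Z^\varrho = Z^T$; a single $\varrho$ can be taken to work for all level sets of $f$ simultaneously, since the subanalytic or algebraic set $\bigcup_i f^{-1}(i)$ admits an invariant stratification with constant isotropy type on each stratum, hence has only finitely many proper isotropy subgroups $H_1,\dots,H_m \subsetneq T$ to avoid. Applying the $p$-group form of the Smith identity from Remark \ref{rem:pgroup} to the constant function $1 \in \Fun_\varrho(Z,\bF_p)$ then gives
$$\chi_c(Z^T) \;=\; \chi_c(Z^\varrho) \;\equiv\; \chi_c(Z) \pmod{p}.$$
Since this congruence holds for every prime $p$, the two integers $\chi_c(Z)$ and $\chi_c(Z^T)$ must coincide, which combined with the level-set decomposition finishes the proof of the proposition.

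The single nontrivial ingredient, and the main obstacle, is producing the $p$-group $\varrho$. This relies on the density of the $p$-primary torsion subgroup $T[p^\infty] = \bigcup_k T[p^k]$ in $T$ (topological density when $T = \U(1)^n$, Zariski density when $T = (\bC^*)^n$), which guarantees $T[p^\infty]$ cannot be contained in any of the proper closed subgroups $H_i$, combined with the finiteness of the orbit-type stratification so that $T[p^k]$ avoids all the $H_i$ for $k$ sufficiently large. This density/finiteness argument is standard; granted it, the proposition is a formal consequence of Proposition \ref{prop:Smith} and Remark \ref{rem:pgroup}.
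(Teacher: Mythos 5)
Your proof is correct and rests on the same central idea as the paper's: detect the integer identity $\int_X f = \int_{X^T} f|_{X^T}$ by reducing mod $p$ and invoking Smith theory. The one technical difference is that the paper first reduces to $T = \U(1)$ by induction on $\dim T$ and then uses the subgroup $\bZ/p \subset \U(1)$ for all sufficiently large primes $p$, finishing with the remark that a map of free abelian groups is determined by its reduction mod infinitely many primes; you instead use finite $p$-groups $\varrho \subset T$ via Remark~\ref{rem:pgroup} for \emph{every} prime $p$, which sidesteps the induction on $\dim T$ at the cost of needing the $p$-group (rather than just $\bZ/p$) form of the Smith congruence. Both routes lean on the same finiteness-of-orbit-types fact to produce the requisite cyclic or $p$-subgroup with the correct fixed-point set, so the proofs are morally identical and your variant is, if anything, slightly more streamlined.
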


\begin{proof}
Let us give the proof in the subanalytic setting, the complex algebraic version is similar.  We may reduce to the case $T = \U(1)$ by induction on the dimension of $T$.  For each prime $p$ we have an inclusion $\bZ/p \subset \U(1)$, and since the action is subanalytic we have $X^{\U(1)} = X^{\bZ/p}$ for $p$ sufficiently large.  The Proposition now follows from Proposition \ref{prop:Smith} and Remark \ref{rem:BorSmcompat}, and the observation that a map between free abelian groups is determined by its reduction mod infinitely many primes.
\end{proof}

\subsection{The Smith operator for $G$-spaces}
Let $G$ be a Lie group (or a complex algebraic group) and let $X$ be a $G$-space.  If we fix a subgroup of $G$ of the form $\varpi = \bZ/p$, then $\Fun_G(X) \subset \Fun_\varpi(X)$.  Moreover, if $f \in \Fun_G(X)$ then $\Sm(f):X^\varpi \to \k$ is constant along $Z_G(\varpi)$-orbits, where $Z_G(\varpi)$ is the centralizer of $\varpi$ in $G$.  We define the equivariant Smith operator to be the function
$$\Sm:\Fun_G(X) \to \Fun_{Z(\varpi)}(X^\varpi)$$

\begin{remark}
\label{rem:normalizer}
In fact $\Sm(f)$ is constant along orbits for the slightly larger group $N_G(\varpi) \supset Z_G(\varpi)$, the normalizer of $\varpi$ in $G$.  If $\varpi$ is a larger $p$-group or torus, then $N_G(\varpi)$ can be quite a bit larger than $Z_G(\varpi)$ and this extra invariance is useful.  However at the level of sheaves this larger group turns out to play a different role than $Z_G(\varpi)$, so we prefer to regard $\Sm$ as taking values in $\Fun_{Z_G}(X)$.
\end{remark}

Combining the results of this section gives us the following:

\begin{theorem}
\label{thm:smithhecke}
Let $G$ act on $X$ and let $\varpi$ be a subgroup of $G$ of the from $\bZ/p$.  The Smith operator
$$\Sm:\Fun_G(X \times X,\k) \to \Fun_{Z_G(\varpi)}(X^\varpi \times X^\varpi,\k)$$
is a $\k$-algebra homomorphism.
\end{theorem}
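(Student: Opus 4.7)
My plan is to derive Theorem~\ref{thm:smithhecke} directly from Proposition~\ref{prop:Smith}, together with the obvious compatibility of $\Sm$ with pullback and pointwise multiplication, by writing the convolution product in the correspondence formalism.

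First I would check well-definedness. For $f \in \Fun_G(X \times X, \k)$, the restriction $\Sm(f)$ must lie in $\Fun_{Z_G(\varpi)}(X^\varpi \times X^\varpi, \k)$. The fixed locus of the diagonal $\varpi$-action on $X \times X$ is precisely $X^\varpi \times X^\varpi$; any element of $Z_G(\varpi)$ commutes with $\varpi$ and hence preserves $X^\varpi$; and the $G$-invariance of $f$ forces $Z_G(\varpi)$-invariance of its restriction. $\k$-linearity of $\Sm$ is automatic since restriction is linear.

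The heart of the argument is multiplicativity. Let $p_{12}, p_{23}, p_{13} : X \times X \times X \to X \times X$ denote the three projections. The defining formula $(f_1 \ast f_2)(x,y) = \int_z f_1(x,z)\, f_2(z,y)$ can be rewritten as
$$f_1 \ast f_2 \;=\; (p_{13})_!\bigl(p_{12}^* f_1 \cdot p_{23}^* f_2\bigr),$$
where $\cdot$ denotes pointwise product. All three projections are $\varpi$-equivariant for the diagonal action, and $(X \times X \times X)^\varpi = X^\varpi \times X^\varpi \times X^\varpi$. Applying $\Sm$ and using (i) that $\Sm$ trivially commutes with $u^*$ and with pointwise product, both being manifestations of restriction to a subset, and (ii) that $\Sm$ commutes with $u_!$ by Proposition~\ref{prop:Smith}, we obtain
$$\Sm(f_1 \ast f_2) \;=\; (p_{13})_!\bigl(p_{12}^* \Sm(f_1) \cdot p_{23}^* \Sm(f_2)\bigr) \;=\; \Sm(f_1) \ast \Sm(f_2),$$
where the projections on the right are the analogous maps between $X^\varpi \times X^\varpi \times X^\varpi$ and $X^\varpi \times X^\varpi$. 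Unitality is immediate: the unit of each Hecke algebra is the characteristic function of the diagonal, and restriction sends $\mathbf{1}_{\Delta_X}$ to $\mathbf{1}_{\Delta_{X^\varpi}}$.

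There is no genuine obstacle: all substantive input is already present in Proposition~\ref{prop:Smith}, and the theorem is essentially a special case of the naturality of $\Sm$ with respect to the correspondence category (Remark~\ref{rem:cor2}), applied to the particular correspondence $(X \times X) \leftarrow X \times X \times X \rightarrow (X \times X) \times (X \times X)$ that defines convolution.
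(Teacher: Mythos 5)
Your proof is correct and matches the paper's intent: the paper gives no explicit proof of this theorem, merely stating that it follows by \emph{combining the results of this section}, and your decomposition of convolution as $(p_{13})_!\bigl(p_{12}^* f_1 \cdot p_{23}^* f_2\bigr)$ together with Proposition~\ref{prop:Smith} (for $u_!$) and the trivial compatibility of restriction with pullback and pointwise product supplies exactly those details. One small slip in your closing remark: the correspondence realizing convolution should be written $(X\times X)\times(X\times X) \leftarrow X\times X\times X \rightarrow X\times X$, not in the reverse direction, since a correspondence $A \leftarrow B \rightarrow C$ induces an operator $\Fun(A) \to \Fun(C)$.
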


\subsection{Ind-varieties}
\label{sec:ind}

We will explain how to extend some of the formalism of constructible functions, Hecke algebras, and the Smith operator to the setting of ind-complex varieties.  In this paper (in section \ref{sec:SHA}) we only treat the case of $G(\cK)$ acting on the affine Grassmannian, but we make a few general remarks here.

For us, an ind-variety is a topological space $X$ together with an exhaustive filtration $X \supset \cdots X_i \supset X_{i-1} \supset \cdots \supset X_0$ by closed subspaces, each equipped with the structure of a complex algebraic variety and each inclusion $X_j \hookrightarrow X_{j+1}$ being algebraic.  See \cite[Section 2.2]{Nadler} for an exposition of ind-varieties relevant for our applications.

Suppose $X = \bigcup X_i$ is an ind-variety.  Denote by $\Fun^{\fd}(X;\k)$ the $\k$-module of functions on $X$ that are supported on one of the $X_i$, and that are constructible there.  We call $\Fun^{\fd}(X;\k)$ the $\k$-module of constructible functions with finite-dimensional support.  On ind-varieties of the form $X \times X$ it is useful to consider a larger class of functions.  Let us say a function $f:X \times X \to \k$ has ``property H'' if for every finite-dimensional subvariety $Z \subset X$, the functions $f \vert_{Z \times X}$ and $f\vert_{X \times Z}$ have finite-dimensional support.  We $\Fun^{\rmH}$ denote the group of functions satisfying property $H$.  It is clear that the operation $(f,g) \mapsto f \ast g$ where
$$f \ast g(x,y) = \int_z f(x,z) g(z,y)$$
is well-defined whenever both $f$ and $g$ have property H.

\begin{example}
In case $X$ is discrete, we may identify $\Fun^{\rmH}(X \times X;\k)$ with the $\k$-module of infinite square matrices each row and each column of which have only finitely many nonzero entries.
\end{example}

\begin{example}
If $G$ is an infinite discrete group then we may identify $\Fun_G^{\mathrm{H}}(G \times G)$ with the group ring of $G$ as in Example \ref{ex:doublecosets}.
\end{example}

It is clear that whenever $Y$ is an ind-subvariety of $X$, a function $f:X \times X \to \k$ with property H restricts to a function on $Y \times Y$ with property $H$.  In particular if $\varpi$ acts on $X$ then we have a natural restriction map
$$\Sm:\Fun_\varpi^{\rmH}(X \times X;\k) \to \Fun^{\rmH}(X^\varpi \times X^\varpi;\k)$$
which is an algebra homomorphism if $\k$ has characteristic $p$ and which we call the ``Smith operator'' as usual.

It is natural to consider actions of ind-pro algebraic groups $\bfG$ on ind-varieties $X$.  In that case there is a subalgebra $\Fun_{\bfG}^{\rmH}(X\times X;\k) \subset \Fun^{\rmH}(X \times X;\k)$ of H-functions that are constant on the $\bfG$-orbits of $X \times X$.  If we have an inclusion $\varpi \subset \bfG$ then we have a ind pro algebraic subgroup $Z_\bfG(\varpi) \subset \bfG$ that centralizes $\varpi$.  In that case the Smith operator defines an algebra homomorphism

$$\Fun^{\rmH}_{\bfG}(X \times X;\k) \to \Fun^{\rmH}_{Z_{\bfG}(\varpi)}(X^\varpi \times X^\varpi;\k)$$

\subsection{Smith and the natural operators on constructible functions}

Let us recall some other important operations on functions, and show that the Smith operators is compatible with all of them.

\subsubsection{Duality}

Let $M$ be a real analytic manifold.  Let us call an open subset $U$ of $M$ \emph{good} if there exists a subanalytic triangulation of $M$ for which $U$ is the star of a vertex.  We moreover require that the triangulation is fine enough that the boundary of $U$ is a topological sphere.  In that case we define constructible functions $i_U$ and $j_U$ by
$$
\begin{array}{c}
i_U(x) := \bigg\{ 
   \begin{array}{ll}
   1 & \text{if $x$ is in the closure of $U$}\\
   0 & \text{otherwise}
   \end{array} \\
j_U(x) := \bigg\{
   \begin{array}{ll}
   (-1)^{\dim(U)} & \text{if $x$ is in $U$}\\
   0 & \text{if $x$ is outside of or on the boundary of $U$}
   \end{array}
\end{array}
$$
We refer to $i_U$ and $j_U$ as the \emph{standard} and \emph{costandard} functions on $M$ associated to $U$ respectively.  
Standard results on the existence of subanalytic triangulations imply that the functions $i_U$ (resp. $j_U$) generate $\Fun(M;\k)$ as a $\k$-module.  

\begin{remark}
The following variations on this fact are useful:
\begin{enumerate}
\item Suppose $\{\cU_i\}_{i \in I}$ is an open cover of $M$.  Then the collection of functions $j_U$ where $U$ is a good open subset of $M$ entirely contained in one of the $\cU_i$ span $\Fun(M;\k)$.
\item Suppose $X \subset M$ is a subanalytic set.  Then the functions of the form $j_U\vert_X$ span $\Fun(X;\k)$.
\end{enumerate}
\end{remark}

From this we can deduce the following:

\begin{theorem}
\label{thm:dualityfun}
There is a unique system of operators $\bD_X:\Fun(X) \to \Fun(X)$ satisfying the following conditions:
\begin{enumerate}
\item When $M$ is a real analytic manifold and $U \subset M$ is a subanalytic open subset with smooth boundary, then $\bD_X(i_U) = \bD_X(j_U)$.
\item When $f:X \to Y$ is a closed immersion, then $\bD_Y \circ f_! = f_! \circ \bD_X$.
\end{enumerate}
The operators $\bD_X$ satisfy the following additional properties:
\begin{enumerate}
\item[(3)] When $X \stackrel{\overleftarrow{u}}{\leftarrow} Z \stackrel{\overrightarrow{u}}{\rightarrow} Y$ is a subanalytic correspondence with $\overleftarrow{u}$ \'etale and $\overrightarrow{u}$ proper, the operator $\overrightarrow{u}_! \overleftarrow{u}^*:\Fun(X) \to \Fun(Y)$ intertwines $\bD_X$ and $\bD_Y$.
\item[(4)] $\bD_X \circ \bD_X = \mathrm{id}$
\end{enumerate}
\end{theorem}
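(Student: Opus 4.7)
The plan is to prove uniqueness first, then construct $\bD_X$ by descent from an ambient manifold, and finally verify the additional properties. For uniqueness, every subanalytic $X$ embeds as a closed subanalytic subset $\iota \colon X \hookrightarrow M$ of a real-analytic manifold (e.g.\ in some $\bR^N$), and by the remark preceding the theorem the restricted functions $j_U\vert_X$ with $U$ a good open in $M$ span $\Fun(X;\k)$. Condition (2) applied to the closed immersion $\iota$ forces $\iota_!\bD_X(f) = \bD_M(\iota_! f)$; since $\iota_!$ is injective on functions, $\bD_X$ is determined by $\bD_M$. Condition (1) in turn determines $\bD_M$ on the spanning set $\{i_U\}$, hence on all of $\Fun(M;\k)$.

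For existence I would construct $\bD_M$ from a subanalytic triangulation $\Sigma$ of $M$. The indicators $\mathbf{1}_{\sigma^\circ}$ of the open simplices form a $\k$-basis of $\Fun(M;\k)$, and I would define $\bD_M(\mathbf{1}_{\sigma^\circ})$ by a signed star-sum formula, expressing it as a $\pm$-weighted combination of $\mathbf{1}_{\overline{\tau}}$ for $\tau$ ranging over the simplices containing $\sigma$. This is the function-level incarnation of sheaf-theoretic Verdier duality, whose stalks involve local Euler characteristics of links. A common-refinement argument, resting on Euler-characteristic additivity and on the existence of common refinements of subanalytic triangulations, shows the formula is independent of $\Sigma$. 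A direct computation on a triangulation adapted to a good open $U$ then verifies condition (1): interior points contribute $(-1)^{\dim U}$ from the full star inside $U$, while boundary points cancel because the link of a smooth boundary point splits into two hemispheres with opposite-sign Euler contributions. For general subanalytic $X$ with closed embedding $\iota \colon X \hookrightarrow M$, the operator $\bD_M$ preserves the submodule $\iota_!\Fun(X;\k)$ (only cells of $X$ appear in star sums rooted in $X$), and $\bD_X$ is obtained by restriction; independence of the embedding is checked by comparing $\iota_1 \colon X \hookrightarrow M_1$ and $\iota_2 \colon X \hookrightarrow M_2$ through the product embedding into $M_1 \times M_2$.

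Property (3) I would verify on the cell basis: given compatible triangulations of $X, Z, Y$, the map $\overleftarrow{u}^*$ sends each open cell of $X$ to a disjoint union of open cells of $Z$ of the same dimension (since $\overleftarrow{u}$ is \'etale), and $\overrightarrow{u}_!$ commutes with the star-sum formula because properness ensures each fiber of $\overrightarrow{u}$ is a finite union of cells together with their full stars. Property (4) amounts to $\bD_M^2(\mathbf{1}_{\sigma^\circ}) = \mathbf{1}_{\sigma^\circ}$: expanding and swapping summations produces, for each pair $(\sigma, \rho)$, a signed M\"obius number on the face lattice of the open star of $\sigma$, which evaluates to $\delta_{\sigma,\rho}$---this is a combinatorial incarnation of Poincar\'e--Lefschetz duality for the star.

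The main obstacle will be triangulation- and embedding-independence of the construction of $\bD_M$. Both reduce to bookkeeping under refinement, based on additivity of the compactly supported Euler characteristic and on subanalytic triangulation theory \cite{KS}; nothing conceptually deep is needed beyond these inputs, but this is where the subanalytic hypothesis does genuine work. Once the construction is in hand, properties (3) and (4) follow by direct calculation on the cell basis.
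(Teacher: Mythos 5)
The paper does not actually give a proof of this result; it simply cites \cite[Theorem 2.5]{Schapira}, so your from-scratch triangulation-based argument is a genuinely different route, and the overall structure (uniqueness via an embedding and condition (2); existence via a formula on a simplicial basis; triangulation-independence by common refinement) is the right plan. But the central formula is wrong. The Verdier dual of $j_!K_{\sigma^\circ}$ (where $j\colon\sigma^\circ\hookrightarrow M$ is the inclusion of an open simplex) is $j_*K_{\sigma^\circ}[\dim\sigma]$, whose associated constructible function is $(-1)^{\dim\sigma}\mathbf{1}_{\overline\sigma}$. Thus
$$\bD_M(\mathbf{1}_{\sigma^\circ})=(-1)^{\dim\sigma}\mathbf{1}_{\overline\sigma},$$
a single term supported on the closure of $\sigma$; there is no star sum over cofaces $\tau\supseteq\sigma$. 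The faces of $\sigma$, not the simplices in its open star, are what enter. You are probably conflating this with the \emph{evaluation} of $\bD_M(f)$ at a point: for $x\in\tau^\circ$, writing $f=\sum_\sigma c_\sigma\mathbf{1}_{\sigma^\circ}$ yields $\bD_M(f)(x)=\sum_{\sigma\supseteq\tau}(-1)^{\dim\sigma}c_\sigma$, and \emph{there} a sum over the star of $\tau$ and link Euler characteristics appear. With the corrected formula, condition (1) amounts to the identity $\sum_{\sigma\colon\tau\subseteq\sigma\subseteq\overline U}(-1)^{\dim\sigma}=(-1)^n$ when $v\in\overline\tau$ (using $\chi(\mathrm{lk}\,\tau)=1+(-1)^{n-\dim\tau-1}$ for an interior simplex) and $=0$ otherwise (by the sign-reversing involution $\sigma\mapsto\sigma\,\triangle\,\{v\}$), and property (4) reduces to the binomial identity $\sum_{\tau\subseteq\rho\subseteq\sigma}(-1)^{\dim\rho}=(-1)^{\dim\sigma}\delta_{\tau\sigma}$ on the Boolean interval $[\tau,\sigma]$ --- not a M\"obius sum over the star as you wrote. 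Finally, your sketch of (3) is the weakest part: arranging triangulations of $X$, $Z$, $Y$ simultaneously compatible with both legs of the correspondence is more delicate than one line, and the appeal to properness is not doing the work you claim; you would either need a genuine common-stratification argument or should try to deduce (3) formally from (1), (2), and base change.
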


\begin{proof}
See \cite[Theorem 2.5]{Schapira}
\end{proof}

\begin{definition}
Let $\bD_X$ be the operators of Theorem \ref{thm:dualityfun}.  Let $u:X \to Y$ be a subanalytic map between subanalytic varieties.  We introduce the following notation:
\begin{enumerate}
\item We refer to $\bD_X$ as the duality operator of $X$
\item We let $u_*:\Fun(X) \to \Fun(Y)$ denote the unique operator with $\bD_Y \circ u_* = u_! \circ \bD_X$.
\item We let $u^!:\Fun(Y) \to \Fun(X)$ denote the unique operator with $\bD_Y \circ u^! = u^* \circ \bD_Y$.
\end{enumerate}
\end{definition}

Thus, if $u$ is proper then $u_* = u_!$, and if $u$ is \'etale then $u^! = u^*$.  

If $\varpi$ acts subanalytically on $X$, then applying property (2) to the translation maps in $\varpi$ we see that $\bD_X$ must carry $\varpi$-invariant functions to $\varpi$-invariant functions.  Similarly $u_*$ and $u^!$ carry $\varpi$-invariant functions to $\varpi$-invariant functions whenever $u$ is $\varpi$-equivariant.

\begin{proposition}
\label{prop:Smithduality}
Let $X$ and $Y$ be subanalytic varieties equipped with $\varpi$-actions, and let $u$ be a $\varpi$-equivariant map.  Suppose $\k$ has characteristic $p$.
\begin{enumerate}
\item We have $\Sm \circ \bD_X = \bD_X \circ \Sm$ 
\item We have $u_* \circ \Sm = \Sm \circ u_*$
\item We have $u^! \circ \Sm = \Sm \circ u^!$

\end{enumerate}

\end{proposition}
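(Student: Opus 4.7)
The plan is to prove (1) directly; (2) and (3) then follow formally.

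\textbf{Reduction of (2), (3) to (1).} By definition $u_* = \bD_Y u_! \bD_X$ and $u^! = \bD_X u^* \bD_Y$. The compatibility $\Sm u_! = u_! \Sm$ is Proposition \ref{prop:Smith}, and $\Sm u^* = u^* \Sm$ was noted in the discussion preceding Proposition \ref{prop:Smith}. Granted (1), which asserts $\Sm \bD_X = \bD_{X^\varpi} \Sm$, we compute
\[
\Sm u_* = \Sm \bD_Y u_! \bD_X = \bD_{Y^\varpi} \Sm u_! \bD_X = \bD_{Y^\varpi} u_! \Sm \bD_X = \bD_{Y^\varpi} u_! \bD_{X^\varpi} \Sm = u_* \Sm,
\]
and similarly $\Sm u^! = u^! \Sm$.

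\textbf{Strategy for (1).} Since $\Sm(g)(x) = g(x)$ for $x \in X^\varpi$, the claim $\Sm \bD_X = \bD_{X^\varpi} \Sm$ reduces to the pointwise identity
\[
\bD_X(f)(x) = \bD_{X^\varpi}(f|_{X^\varpi})(x) \qquad \text{in } \k, \quad x \in X^\varpi, \; f \in \Fun_\varpi(X,\k).
\]
I would use a local Euler-integral formula for duality: for a sufficiently small subanalytic closed ball $\overline{B_\epsilon(x)} \subset X$ with boundary sphere $S_\epsilon(x)$,
\[
\bD_X(f)(x) = \int_{\overline{B_\epsilon(x)}} f \; - \; \int_{S_\epsilon(x)} f.
\]
One verifies this by testing on the generators $i_U$ and $j_U$ of Theorem \ref{thm:dualityfun}, using the elementary inputs $\chi(\bar D^n)=1$ and $\chi(S^{n-1})=1+(-1)^{n-1}$. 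At a fixed point $x$, averaging a subanalytic distance function over the finite group $\varpi$ yields a $\varpi$-invariant such ball. The functions $f \cdot \mathbf{1}_{\overline{B_\epsilon(x)}}$ and $f \cdot \mathbf{1}_{S_\epsilon(x)}$ are then $\varpi$-invariant, and Remark \ref{rem:pgroup} gives
\[
\int_{\overline{B_\epsilon(x)}} f \equiv \int_{\overline{B_\epsilon(x)} \cap X^\varpi} f|_{X^\varpi}, \qquad \int_{S_\epsilon(x)} f \equiv \int_{S_\epsilon(x) \cap X^\varpi} f|_{X^\varpi} \pmod{p}.
\]
Subtracting and applying the same local formula inside $X^\varpi$ to $f|_{X^\varpi}$ at $x$ yields $\bD_X(f)(x) \equiv \bD_{X^\varpi}(f|_{X^\varpi})(x) \pmod{p}$; since $\k$ has characteristic $p$, this is an equality in $\k$.

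\textbf{Main obstacle.} The only non-formal step is the geometric input: producing $\varpi$-invariant subanalytic closed balls around $x$ whose intersection with $X^\varpi$ is a good closed ball for the local duality formula on $X^\varpi$, with $S_\epsilon(x) \cap X^\varpi$ equal to its boundary sphere. This is handled by constructing a $\varpi$-equivariant subanalytic chart at $x$: embed a neighborhood of $x$ $\varpi$-equivariantly into a finite-dimensional real representation of $\varpi$ (which exists by averaging an arbitrary subanalytic chart), and pull back Euclidean $\epsilon$-balls. With this choice, $\overline{B_\epsilon(x)} \cap X^\varpi$ and $S_\epsilon(x) \cap X^\varpi$ are concentric ball and sphere in the fixed linear subspace (up to the subanalytic homeomorphism from the chart), and the local formula on $X^\varpi$ applies for all sufficiently small $\epsilon$.
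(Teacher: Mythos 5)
Your reduction of (2) and (3) to (1) via $u_* = \bD_Y u_! \bD_X$, $u^! = \bD_X u^* \bD_Y$, together with Proposition \ref{prop:Smith} and the pullback compatibility, matches the paper's argument exactly. For (1) you take a genuinely different route. The paper also begins with a $\varpi$-equivariant subanalytic embedding into a linear $\varpi$-representation $V$, but then argues on \emph{generators}: it invokes the fact that the costandard functions $j_U$, for $U$ a $\varpi$-invariant good open subset of $V$, span $\Fun_\varpi(V;\k)$, observes that linearity of the action makes $U^\varpi$ a good open subset of $V^\varpi$ with $j_U\vert_{V^\varpi} = j_{U^\varpi}$ in $\k$ (a sign $(-1)^{\dim V - \dim V^\varpi}$ appears here, harmless because $\dim V - \dim V^\varpi$ is even for $p$ odd and $-1 = 1$ for $p=2$), and then simply computes $\bD_{V^\varpi}\Sm(j_U) = i_{U^\varpi} = \Sm(i_U) = \Sm\bD_V(j_U)$. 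You instead prove (1) pointwise via a local Euler-integral formula $\bD_X(f)(x) = \int_{\overline{B_\epsilon(x)}} f - \int_{S_\epsilon(x)} f$ (equivalently $\int_{B_\epsilon(x)} f$, by additivity of the Euler integral over $\overline{B_\epsilon} = B_\epsilon \sqcup S_\epsilon$), and then apply Proposition \ref{prop:Smith}/Remark \ref{rem:pgroup} to the $\varpi$-invariant compact sets $\overline{B_\epsilon(x)}$ and $S_\epsilon(x)$. Both routes are sound and both ultimately lean on the same geometric input, a $\varpi$-equivariant subanalytic chart. The trade-off: the paper's approach requires the (nontrivial, triangulation-theoretic) fact that $\varpi$-invariant good open subsets generate $\Fun_\varpi(V)$; yours avoids that but shifts the burden onto establishing the local formula for $\bD_X$, which you only assert and verify informally on $i_U$, $j_U$ --- that verification is not a one-line computation (you must track the intersection of $B_\epsilon(x)$ with $U$ and $\overline{U}$ for $x$ in the interior, on the boundary sphere, and outside, and also confirm the formula is well defined and compatible with closed immersions, so that it really is the unique operator of Theorem \ref{thm:dualityfun}). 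If you spell out that verification, your proof is complete and arguably more transparent at fixed points than the generator argument; as written it leaves the central analytic lemma as a sketch.
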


\begin{proof}
Pick a subanalytic embedding $X \hookrightarrow V$ into a real vector space $V$, and suppose that the embedding is equivariant for a linear $\varpi$-action on $V$.  It suffices to verify that $\bD_{V^\varpi} \circ \Sm(f)  = \Sm \circ \bD_{V}(f)$ for all $f \in \Fun_\varpi(V;\k)$, and since the costandard functions $j_U$ where $U$ is $\varpi$-invariant good open subset generate we may assume $f$ is of this form.  Since the $\varpi$-action is linear, $U^\varpi$ is a good open subset of $V^\varpi$, so $j_U\vert_{V^\varpi} = j_{U^\varpi}$.  Then $\bD_{V^\varpi} \circ \Sm(j_U) = i_{U^\varpi} = \Sm(i_U) = \Sm \circ \bD_V(j_U)$.  This proves property (1).  

Properties (2) and (3) follow from property (1), Proposition \ref{prop:Smith}, and the definitions. 
\end{proof}

\subsubsection{Specialization}
\label{sec:specialization}

\begin{definition}
Let $X$ be a subanalytic variety and let $u:X \to \bR$ be a subanalytic map.  Let $i$ denote the inclusion $u^{-1}(0) \hookrightarrow X$ and $j$ the inclusion $u^{-1}\{ t \mid t > 0\} \hookrightarrow X$.  The \emph{upper specialization} operator is the homomorphism $\psi_u^+:\Fun(X;\k) \to \Fun(u^{-1}(0);\k)$ given by 
$$\psi_u^+(f) = i^* j_* j^* f$$
\end{definition}

If $\varpi$ acts on $X$ and $u$ is constant on $\varpi$-orbits, $\varpi$ also acts on $u^{-1}(0)$ and the specialization map carries $\Fun_\varpi(X;\k)$ to $\Fun_\varpi(u^{-1}(0);\varpi)$.

\begin{proposition}
\label{prop:speccompat}
Let $\varpi$ act on $X$ and let $\k$ have characteristic $p$.  Suppose $u:X \to \bR$ is constant on $\varpi$-orbits.  We have a commutative square
$$
\xymatrix{
\Fun_\varpi(X;\k) \ar[r]^{\Sm} \ar[d]_{\psi^+_u} & \Fun(X^\varpi,\k) \ar[d]^{\psi_{u\vert_{X^\varpi}}^+} \\
\Fun_\varpi(u^{-1}(0);\k) \ar[r]_{\Sm} & \Fun(u^{-1}(0)^\varpi,\k)
}
$$
\end{proposition}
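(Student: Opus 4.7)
The plan is to decompose the upper specialization operator as $\psi_u^+ = i^* \circ j_* \circ j^*$ and verify that the Smith operator intertwines each of these three building blocks with the corresponding building blocks of $\psi^+_{u|_{X^\varpi}}$. The compatibilities for the two pullbacks $i^*$ and $j^*$ come directly from the fact that pullbacks commute with $\Sm$ (stated in the paragraph preceding Proposition \ref{prop:Smith}), while the compatibility for $j_*$ is exactly Proposition \ref{prop:Smithduality}(2).

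First I would unwind what the right-hand specialization actually is on the fixed locus. Because $u$ is constant on $\varpi$-orbits, the subsets $Z := u^{-1}(0)$ and $U := u^{-1}(\{t > 0\})$ are $\varpi$-invariant, and taking $\varpi$-fixed points commutes with taking preimages under $u$; concretely, $Z^\varpi = (u|_{X^\varpi})^{-1}(0)$ and $U^\varpi = (u|_{X^\varpi})^{-1}(\{t > 0\})$. Hence the inclusions $i : Z \hookrightarrow X$ and $j : U \hookrightarrow X$ restrict on fixed points to precisely the inclusions $i' : Z^\varpi \hookrightarrow X^\varpi$ and $j' : U^\varpi \hookrightarrow X^\varpi$ that define $\psi^+_{u|_{X^\varpi}}$. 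This is the only structural identification required.

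With that in hand, for $f \in \Fun_\varpi(X;\k)$ I would chain the three compatibilities in order: $\Sm \circ j^* = (j')^* \circ \Sm$ by commutativity of $\Sm$ with pullback, then $\Sm \circ j_* = (j')_* \circ \Sm$ by Proposition \ref{prop:Smithduality}(2), and finally $\Sm \circ i^* = (i')^* \circ \Sm$ by commutativity of $\Sm$ with pullback again. Composing gives $\Sm \circ \psi_u^+ = (i')^* \circ (j')_* \circ (j')^* \circ \Sm = \psi^+_{u|_{X^\varpi}} \circ \Sm$, which is the asserted commutativity.

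No step poses a real obstacle: all of the substantive content has been established in Propositions \ref{prop:Smith} and \ref{prop:Smithduality}, and the remainder is formal manipulation using that fixed-point loci commute with preimages under $\varpi$-invariant maps. The only point meriting attention is that $j_*$ is defined via duality (since $j$ is open and not proper), so its compatibility with $\Sm$ must be routed through Proposition \ref{prop:Smithduality} rather than through the more elementary base-change result for $u_!$.
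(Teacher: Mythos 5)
Your argument is correct and is the intended (though unwritten) proof: the paper states Proposition \ref{prop:speccompat} without proof, clearly expecting the reader to unwind $\psi_u^+ = i^* j_* j^*$ and apply the compatibilities of $\Sm$ with $i^*$, $j^*$ (the paragraph before Proposition \ref{prop:Smith}) and with $j_*$ (Proposition \ref{prop:Smithduality}(2)), exactly as you do. You also correctly flag the only delicate point, namely that $j_*$ for an open inclusion must be handled via duality rather than via the $u_!$ base-change compatibility.
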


\subsubsection{Fourier-Sato transform}
\label{sec:FT}

\begin{definition}
Let $V$ be a real vector space.  We say a constructible function $f:V \to \k$ is \emph{conic} if $f(t \cdot v) = f(v)$ for all real numbers $t > 0$.  Let $\Fun_{\bR_{>0}}(V;\k) \subset \Fun(V;\k)$ denote the $\k$-module of conical constructible functions.  The \emph{Fourier-Sato transform} $\FT:\Fun_{\bR_{>0}}(V;\k) \to \Fun_{\bR_{>0}}(V^*;\k)$ given by
$$\FT(f)(\xi) = \int_{\{v \in V \mid \xi(v) < 1\}} f$$
\end{definition}

\begin{remark}
We have an evident relative version of this notion: if $V$ is a vector bundle over a space $X$, then $\FT$ carries dilation-invariant functions on the total space of $V$ to dilation-invariant functions on the total space of $V^*$.
\end{remark}

Suppose now that $\varpi$ acts linearly on $V$.  Note that the restriction map $(V^*)^\varpi \to (V^\varpi)^*$ is an isomorphism; we compute the inverse by sending a functional $\xi:V^\varpi \to \bR$ to the $\varpi$-invariant functional $\tilde{\xi}$ given by
$$\tilde{\xi}(v) = \frac{1}{p} \xi(\sum_{g \in \varpi} gx)$$
Write $\Fun_{\bR_{>0},\varpi}(V;\k)$ for the $\k$-module of constructible functions that are conical and constant on $\varpi$-orbits.  The Smith operator defines a map
$$\Sm:\Fun_{\bR_{>0},\varpi}(V;\k) \to \Fun_{\bR_{>0}}(V^\varpi;\k)$$

\begin{proposition}
\label{prop:FTcompat}
Let $V$ be a real vector space with a linear $\varpi$-action, and endow the dual vector space $V^*$ with the contragredient $\varpi$-action.  The Smith operator is compatible with the Fourier-Sato transform and the identification $(V^\varpi)^* = (V^*)^\varpi$.  That is, the following diagram commutes
$$
\xymatrix{
\Fun_{\bR_{>0},\varpi}(V,\k) \ar[rr]^{\FT} \ar[d]_{\Sm} & & \Fun_{\bR_{>0},\varpi}(V^*;\k) \ar[d]^{\Sm} \\
\Fun_{\bR_{>0}}(V^\varpi;\k) \ar[r]^{\FT} & \Fun_{\bR_{>0}}((V^\varpi)^*;\k) \ar[r]^{\xi \mapsto \tilde{\xi}} & \Fun_{\bR_{>0}}((V^*)^\varpi;\k)
}
$$
\end{proposition}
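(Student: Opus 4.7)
The plan is to verify the diagram pointwise: fix an arbitrary $\eta \in (V^*)^\varpi$ and show the two paths produce the same value at $\eta$. Unfolding the top-and-right path, $\Sm(\FT(f))(\eta)$ is obtained by restriction of $\FT(f)$, so it equals
$$\int_{\{v \in V \mid \eta(v) < 1\}} f.$$
For the bottom path, note first that the formula $\widetilde{\xi}(v) = \tfrac{1}{p}\xi(\sum_g g v)$ satisfies $\widetilde{\xi}\vert_{V^\varpi} = \xi$, so $\xi \mapsto \widetilde{\xi}$ is a genuine inverse to the restriction $(V^*)^\varpi \to (V^\varpi)^*$; concretely, the preimage of $\eta \in (V^*)^\varpi$ under this identification is $\eta\vert_{V^\varpi}$. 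Therefore the bottom path evaluates at $\eta$ to
$$\FT(\Sm(f))(\eta\vert_{V^\varpi}) = \int_{\{v \in V^\varpi \mid \eta(v) < 1\}} \Sm(f).$$

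The next step is to reduce the equality of these two integrals to Proposition \ref{prop:Smith}. Let $U := \{v \in V \mid \eta(v) < 1\}$. Because $\eta$ is $\varpi$-invariant, $U$ is a $\varpi$-invariant subanalytic open subset of $V$ with fixed point set $U^\varpi = \{v \in V^\varpi \mid \eta(v) < 1\}$. The function $f\vert_U$ lies in $\Fun_\varpi(U;\k)$, and by the definition of $\Sm$ we have $\Sm(f\vert_U) = \Sm(f)\vert_{U^\varpi}$. Applying Proposition \ref{prop:Smith} to the structure map $U \to \pt$ (which is $\varpi$-equivariant for the trivial action on a point) and using that $\k$ has characteristic $p$, we obtain
$$\int_U f = \int_{U^\varpi} \Sm(f)\vert_{U^\varpi},$$
which is exactly the desired identification of the two pointwise values.

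There is no serious obstacle here; the only thing to be careful about is matching the two different bases for characters of $\varpi$-representations, namely $(V^\varpi)^*$ and $(V^*)^\varpi$. The computation $\widetilde{\xi}\vert_{V^\varpi} = \xi$ confirms that these are inverse identifications and lets the half-space $\{\eta < 1\} \subset V$ match its intersection $\{\eta\vert_{V^\varpi} < 1\} \subset V^\varpi$ with the fixed point set. Once this bookkeeping is in place, the substance of the proposition is the mod-$p$ vanishing of the Euler characteristic of a free $\varpi$-action that already powers Proposition \ref{prop:Smith}.
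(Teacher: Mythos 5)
Your proof is correct and follows essentially the same route as the paper's: both proofs pass through the observation that the averaging map $\xi \mapsto \tilde\xi$ and restriction are mutually inverse, unwind both paths to the integrals $\int_{\{\eta < 1\}\subset V} f$ and $\int_{\{\eta < 1\}\subset V^\varpi} \Sm(f)$, and then invoke Proposition~\ref{prop:Smith} applied to the half-space $\{\eta < 1\}$. The only cosmetic difference is that the paper reverses the bottom arrow and evaluates at $\xi \in (V^\varpi)^*$, while you evaluate directly at $\eta \in (V^*)^\varpi$; you are also a bit more explicit about applying Proposition~\ref{prop:Smith} to the specific $\varpi$-stable open set $U$.
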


\begin{proof}
As the averaging map $\Fun_{\bR_{>0}}((V^\varpi)^*;\k) \to \Fun_{\bR_{>0}}((V^*)^\varpi;\k)$ is inverse to the restriction map $\Fun_{\bR_{>0}}((V^*)^\varpi;\k) \to \Fun_{\bR_{>0}}((V^\varpi)^*;\k)$, the Proposition is equivalent to the assertion that the diagram
$$
\xymatrix{
\Fun_{\bR_{>0},\varpi}(V,\k) \ar[rr]^{\FT} \ar[d]_{\Sm} & & \Fun_{\bR_{>0},\varpi}(V^*;\k) \ar[d]^{\Sm} \\
\Fun_{\bR_{>0}}(V^\varpi;\k) \ar[r]^{\FT} & \Fun_{\bR_{>0}}((V^\varpi)^*;\k) & \Fun_{\bR_{>0}}((V^*)^\varpi;\k) \ar[l]
}
$$
commutes.  Let $f:V \to \k$ be a conical, $\varpi$-invariant constructible function on $V$.  The associated map $(V^\varpi)^* \to \k$ given by traveling through the upper right corner of the square is given by
$$\xi \mapsto \int_{\{v \in V \mid \xi(v) < 1\}} f$$
while going through the bottom left corner is given by
$$\xi \mapsto \int_{v \in V^\varpi \mid \xi(v) < 1\}} f\vert_{V^{\varpi}}$$
That these agree follows from the Proposition \ref{prop:Smith}.
\end{proof}

\subsubsection{Specialization and microlocalization}

Let $X$ be a manifold and $M \subset X$ a closed submanifold.  Let $T_M X$ denote the normal bundle and $T_M^* X$ the conormal bundle to $M$ in $X$.  Using the operators of \ref{sec:specialization} and \ref{sec:FT} one defines operators
$$
\begin{array}{c}
\nu_M:\Fun(X;\k) \to \Fun(T_M X;\k) \\
\mu_M:\Fun(X;\k) \to \Fun(T_M^* X;\k)
\end{array}
$$
called ``specialization along $M$'' and ``microlocalization along $M$'' respectively.  The definition involves a new manifold $\widetilde{X}_M$ (see \cite[Chapter IV]{KS} for a construction) called the ``deformation to the normal bundle'' of $M$ in $X$.  $\widetilde{X}_M$ is equipped with an action of $\bR_{>0}$ and a map $\pi:\widetilde{X}_M \to \bR$ with the following properties:

\begin{enumerate}
\item $\pi$ is $\bR_{>0}$-equivariant
\item $\pi^{-1}(t)$ is naturally identified with $X$ for $t \neq 0$
\item $\pi^{-1}(0)$ is naturally identified with the normal bundle $T_M X$.
\end{enumerate}

For each constructible function $f:X \to \k$ we may find a $\bR_{>0}$-invariant constructible function $f_1:\widetilde{X}_M \to \k$ whose restriction to each nonzero fiber agrees with $f$ under the identification (2).  Then we set $\nu_M(f) = \psi_\pi^+(f_1)$ and (as $\nu_M(f)$ is $\bR_{>0}$-invariant) $\mu_M(f) = \FT(\nu_M(f))$.

The construction $(X,M) \mapsto \widetilde{X}_M$ is functorial, in particular if $\varpi$ acts on $X$ and $M$ is stable for this action then $\varpi$ acts on $\widetilde{X}_M$.  Thus we have $\varpi$-equivariant versions of the operators $\nu_M$ and $\mu_M$.
$$\begin{array}{c}
\nu_M:\Fun_\varpi(X;\k) \to \Fun_{\bR_{>0},\varpi}(T_M X;\k)\\
\mu_M:\Fun_\varpi(X;\k) \to \Fun_{\bR_{>0},\varpi}(T^*_M X;\k)
\end{array}
$$

We can identify $(T_M X)^\varpi \cong T_{M^\varpi} X^\varpi$ and $(T_M X)^{*\varpi} \cong T_{M^\varpi}^* X^\varpi$ By Proposition \ref{prop:speccompat} and \ref{prop:FTcompat} we have

\begin{proposition}
Suppose $\varpi$ acts on a manifold $X$ and $M$ is a $\varpi$-invariant submanifold.  Then 
we have commutative squares 
$$
\xymatrix{
\Fun_\varpi(X;\k) \ar[r]^{\nu_M} \ar[d]_{\Sm}  & \Fun_\varpi(T_M X;\k) \ar[d]^{\Sm}  & 
\Fun_\varpi(X;\k) \ar[r]^{\mu_M} \ar[d]_{\Sm}&
\Fun_\varpi(T^*_M X;\k) \ar[d]^{\Sm} \\
\Fun(X^\varpi;k) \ar[r]_{\nu_{M^\varpi}} \ar[r] & \Fun(T_{M^\varpi} X^\varpi; \k)  & \Fun_\varpi(X^\varpi;\k) \ar[r]_{\mu_{M^\varpi}} & \Fun(T^*_{M^\varpi} X^\varpi;\k)
}
$$
\end{proposition}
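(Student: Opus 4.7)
The plan is to reduce both squares to Propositions \ref{prop:speccompat} and \ref{prop:FTcompat} by checking that the deformation-to-the-normal-bundle construction $(X,M) \mapsto \widetilde{X}_M$ is compatible with passing to $\varpi$-fixed points. The key geometric input I would need is an identification
$$(\widetilde{X}_M)^\varpi \;\cong\; \widetilde{X^\varpi}_{M^\varpi}$$
under which the $\bR_{>0}$-action and the map $\pi:\widetilde{X}_M \to \bR$ restrict to the corresponding objects for $(X^\varpi,M^\varpi)$. Because the construction is functorial in the pair $(X,M)$, the $\varpi$-action on $(X,M)$ induces one on $\widetilde{X}_M$ commuting with $\bR_{>0}$ and $\pi$. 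I would verify the identification in an equivariant tubular neighborhood of $M$: pick $\varpi$-invariant coordinates near a point of $M^\varpi$ splitting $M$ off a linear complement on which $\varpi$ acts linearly, and observe that the deformation to the normal bundle is then a standard linear construction which visibly commutes with taking fixed vectors.

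Granting this identification, I would handle the specialization square first. For $f \in \Fun_\varpi(X;\k)$, pick a $\varpi$- and $\bR_{>0}$-invariant constructible extension $f_1:\widetilde{X}_M \to \k$; such an $f_1$ exists because any $\bR_{>0}$-invariant extension appearing in the unequivariant definition of $\nu_M$ can be replaced by its $\varpi$-average. Restricting $f_1$ to $(\widetilde{X}_M)^\varpi \cong \widetilde{X^\varpi}_{M^\varpi}$ yields an $\bR_{>0}$-invariant extension of $\Sm(f)$ appropriate for computing $\nu_{M^\varpi}(\Sm f)$. Since $\pi$ is $\varpi$-invariant, Proposition \ref{prop:speccompat} applies to yield
$$\Sm(\nu_M f) \;=\; \Sm(\psi_\pi^+ f_1) \;=\; \psi_{\pi\vert_{\mathrm{fix}}}^+(\Sm f_1) \;=\; \nu_{M^\varpi}(\Sm f),$$
which is the first commutative square.

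For the microlocalization square I would use the factorization $\mu_M = \FT \circ \nu_M$, with $\FT$ the relative Fourier-Sato transform on the vector bundle $T_M X \to M$, so it remains to check that $\Sm$ intertwines the relative $\FT$ on $T_M X$ with the one on $T_{M^\varpi} X^\varpi$, under the identification of $(T_M^* X)^\varpi$ with $T_{M^\varpi}^* X^\varpi$. Over a fixed point $x \in M^\varpi$ the action of $\varpi$ on the fiber $(T_M X)_x = T_x X/T_x M$ is linear with fixed subspace $T_x X^\varpi/T_x M^\varpi = (T_{M^\varpi} X^\varpi)_x$, and the induced identification on duals is precisely the averaging-inverse-to-restriction map of Section \ref{sec:FT}. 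The fiberwise content of Proposition \ref{prop:FTcompat} then applies, and combining with the first square produces the second.

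The main obstacle is the fixed-point identification of $\widetilde{X}_M$; once it is granted, the rest is formal bookkeeping from the earlier propositions. The essential geometric ingredient is the existence of an equivariant tubular neighborhood of $M$ in $X$ together with the linearity of the $\varpi$-action on the normal directions along $M^\varpi$.
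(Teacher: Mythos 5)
Your approach is the same as the paper's, which likewise deduces both squares from Propositions \ref{prop:speccompat} and \ref{prop:FTcompat} after identifying $(T_M X)^\varpi \cong T_{M^\varpi} X^\varpi$ and $(T^*_M X)^\varpi \cong T^*_{M^\varpi} X^\varpi$; you merely make explicit the compatibility $(\widetilde{X}_M)^\varpi \cong \widetilde{X^\varpi}_{M^\varpi}$ that the paper leaves implicit.

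One slip worth flagging: you cannot produce a $\varpi$-invariant extension $f_1$ by taking a ``$\varpi$-average,'' since $\k$ has characteristic $p$ and $|\varpi| = p$, so the average is undefined; even the unnormalized sum $\sum_{g \in \varpi} g^* f_1$ would restrict to $p\cdot f = 0$ on each nonzero fiber, not to $f$. Fortunately no averaging is needed. On $\pi^{-1}(\bR\setminus\{0\})$ the extension $f_1$ is forced by the condition that it restrict to $f$ on each nonzero fiber; because the identifications $\pi^{-1}(t)\cong X$ are $\varpi$-equivariant (by functoriality of the deformation), this forced restriction is already $\varpi$-invariant whenever $f$ is. And $\psi^+_\pi(f_1) = i^* j_* j^* f_1$ depends only on $f_1\vert_{\pi^{-1}(\bR_{>0})}$, so one may extend over $\pi^{-1}(0)$ in any constructible $\bR_{>0}$-invariant and $\varpi$-invariant way. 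With this correction the rest of your argument goes through.
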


\subsubsection{Singular support}
\label{sec:singsupp}

Let $M$ be a manifold and let $f:M \to \k$ be a constructible function.  We define a subset $\SS(f) \subset T^*M$ to be the closure of the set of $(x,\xi) \in T^*M$ with the following property: in every sufficiently small neighborhood $U$ of $x$ and $\epsilon >0$, if $\psi:U \to \bR$ is a smooth function with $d\psi_x = \xi$, then
$$\int_{\{u \in U \mid \psi(u) \leq \psi(x) + \epsilon\}} f \neq \int_{\{u \in U \mid \psi(u) \leq \psi(x) - \epsilon\}} f$$

If $\varpi$ acts on $M$ and $f$ is $\varpi$-invariant, then $\SS(f)$ is also $\varpi$-invariant.  We may identify $\SS(f)^\varpi$ with a subset of $T^* (M^\varpi)$, and we have the following:

\begin{proposition}
Suppose $\k$ has characteristic $p$.  Then we have $\SS(\Sm(f)) = \SS(f)^\varpi$.
\end{proposition}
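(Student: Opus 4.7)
The plan is to compare $\SS(\Sm f)$ with $\SS(f)^\varpi$ point by point inside the common ambient space $T^*(M^\varpi) = (T^*M)^\varpi$ (using the identification of Section \ref{sec:FT}). Fix $(x, \xi) \in T^*(M^\varpi)$ and write $\tilde\xi \in (T^*_x M)^\varpi$ for the corresponding $\varpi$-invariant covector. The goal is to show that the defining condition for $(x, \xi) \in \SS(\Sm f)$ is equivalent to the defining condition for $(x, \tilde\xi) \in \SS(f)$.

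The first step is to match test functions. Any smooth $\psi: U^\varpi \to \bR$ with $d\psi_x = \xi$, defined on a small neighborhood of $x$ in $M^\varpi$, admits a $\varpi$-invariant smooth extension $\tilde\psi: U \to \bR$ to a $\varpi$-invariant neighborhood $U$ of $x$ in $M$, with $d\tilde\psi_x = \tilde\xi$: pick any smooth extension of $\psi$ and average over the finite group $\varpi$; a chain-rule computation together with the averaging formula for $\tilde\xi$ produces the correct derivative at $x$. Conversely, the restriction of any $\varpi$-invariant $\tilde\psi$ to $U^\varpi$ yields such a $\psi$. For $\tilde\psi$ $\varpi$-invariant, the sublevel set $\{\tilde\psi \leq c\}$ is $\varpi$-invariant and Proposition \ref{prop:Smith} gives, in characteristic $p$,
$$\int_{\{\tilde\psi \leq c\}} f = \int_{\{\tilde\psi|_{M^\varpi} \leq c\}} \Sm(f).$$
Taking the difference of this identity at $c = \psi(x) \pm \epsilon$ identifies the $\tilde\psi$-jump of $f$ at $(x, \tilde\xi)$ with the $\psi$-jump of $\Sm f$ at $(x, \xi)$. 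This already settles the inclusion $\SS(\Sm f) \supseteq \SS(f)^\varpi$: any test function on the fixed-point side lifts to a $\varpi$-invariant test function upstairs, where the hypothesis gives a nonzero jump.

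The main obstacle is the reverse inclusion $\SS(\Sm f) \subseteq \SS(f)^\varpi$, which requires showing that if every $\psi$ on $M^\varpi$ with $d\psi_x = \xi$ witnesses a nonzero jump of $\Sm f$, then every smooth $\tilde\psi$ on $M$ with $d\tilde\psi_x = \tilde\xi$, not only the $\varpi$-invariant ones, witnesses a nonzero jump of $f$. I would establish this by invoking the standard local principle that, for a constructible function $f$, the jump
$$\int_{\{\tilde\psi \leq \tilde\psi(x)+\epsilon\}} f - \int_{\{\tilde\psi \leq \tilde\psi(x)-\epsilon\}} f$$
depends on $\tilde\psi$ only through its $1$-jet $(x, d\tilde\psi_x)$ once $\epsilon$ is sufficiently small relative to a subanalytic stratification adapted to $f$. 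This can be extracted from the formalism of specialization and Fourier--Sato developed in Sections \ref{sec:specialization} and \ref{sec:FT} (cf.\ \cite[Chapter IV]{KS}). Granting it, the average $\bar\psi = \frac{1}{p}\sum_{g \in \varpi} g^*\tilde\psi$ has the same differential $\tilde\xi$ at $x$ (since $\tilde\xi$ is already $\varpi$-fixed) and therefore the same jump as $\tilde\psi$; but $\bar\psi$ is $\varpi$-invariant, so by the first step this jump equals the $\bar\psi|_{M^\varpi}$-jump of $\Sm f$, which is nonzero by hypothesis. This gives equality of the pre-closure sets, and since for constructible $f$ the singular support is already closed, the equality of closures $\SS(\Sm f) = \SS(f)^\varpi$ follows.
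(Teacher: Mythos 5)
Your proof is correct and follows essentially the same strategy as the paper's: reduce to a vector space $V$ with a linear $\varpi$-action, set up the correspondence between test functions $\psi$ on $V^\varpi$ and $\varpi$-invariant test functions $\psi_1$ on $V$ via averaging, and invoke Proposition \ref{prop:Smith} on the $\varpi$-invariant sublevel sets to match the jumps. The one place where you go beyond the paper is in the inclusion whose witness lives on the big space: the paper's proof constructs the $\varpi$-invariant extension $\psi_1$ of a given $\psi$ and then simply asserts ``this is equivalent to $(x,\xi)$ belonging to $\SS(f)$,'' silently passing between arbitrary test functions on $V$ and the $\varpi$-invariant ones. You make this step explicit by invoking the fact that the jump of a constructible function depends only on the $1$-jet of the test function (once $\epsilon$ and the neighborhood are small enough), so that a general $\tilde\psi$ may be replaced by its $\varpi$-average $\bar\psi$ without changing the jump. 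That is exactly the ingredient the paper's terse ``equivalent'' is sweeping under the rug, and it is standard in the Kashiwara--Schapira microlocal framework, though the precise form of the statement (genericity/transversality of $\psi$ relative to a stratification adapted to $f$, absorbed by the closure in the definition of $\SS$) is a bit more delicate than a blanket ``the jump depends only on the $1$-jet.'' In short: same route, but you patch a small gap that the paper leaves to the reader.
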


\begin{proof}
The equality is local on $M$, so it suffices to consider the case where $M$ is an open subset of a vector space $V$.  In that case we identify $\SS(f)^\varpi$ with a subset of $T^*(V^\varpi)$ by sending $(x,\xi) \in \SS(f)^\varpi \subset V \times V^*$ to $(x,\xi\vert_{V^\varpi})$.

A pair $(x,\xi) \in V^\varpi \times (V^\varpi)^*$ belongs to $\SS(\Sm(f))$ if and only if in a neighborhood $U$ of $x$ we may find a smooth $\psi:U \to \bR$ such that $d\psi_x = f$ and
$$\int_{\{u \in U \mid \psi(u) \leq \psi(x) + \epsilon\}} \Sm(f) \neq \int_{\{u \in U \mid \psi(u) \leq \psi(x) - \epsilon\}} \Sm(f)$$
We may always extend $U$ to a $\varpi$-stable open subset $U_1 \subset V$ with $U_1^\varpi = U$, and we may define $\psi_1:U_1 \to \bR$ by
$$\psi_1(u) = \psi(\frac{1}{p}\sum_{g \in \varpi} gu)$$
Then $\psi_1 \vert_U = \psi$ and $\{u \in U \mid \psi(u) \leq t\} = \{u \in U_1 \mid \psi_1(u) \leq t\}^\varpi$, so that by Proposition \ref{prop:Smith} and the equation above we have  
$$\int_{\{u \in U_1 \mid \psi_1(u) \leq \psi_1(x) + \epsilon\}} f \neq \int_{\{u \in U_1 \mid \psi_1(u) \leq \psi_1(x) - \epsilon\}} f$$
As $(d\psi_1)_x = d\psi_x = \xi$, this is equivalent to $(x,\xi)$ belonging to $\SS(f)$.
\end{proof}

\section{Smith theory for the spherical Hecke algebra}
\label{sec:SHA}

In this section we work with complex algebraic varieties and ind-varieties.  Let $\cK$ denote the field of Laurent series $\bC((t))$ and $\cO$ denote the ring  of Taylor series $\bC[[t]]$.  If $G$ is a complex reductive algebraic group then we have an ind-group $G(\cK)$, a subgroup $G(\cO)$, and a coset space $G(\cK)/G(\cO)$ which is an ind-variety in a natural way.

\begin{definition}
Let $\k$ be a commutative ring and let $G$ be a complex connected reductive algebraic group.  The \emph{spherical Hecke algebra} is the $\k$-module of constructible functions $\Fun^{\rmH}_{G(\cK)}(G(\cK)/G(\cO) \times G(\cK)/G(\cO))$ endowed with the convolution product defined in Sections \ref{sec:heckealgebras} and \ref{sec:ind}.  We will denote the spherical Hecke algebra attached to $G$ and $\k$ by $\SHA_{G,\k}$.
\end{definition}

\begin{remark}
Note that if $G$ is not connected then $G(\cK)/G(\cO)$ is naturally identified with $G^\circ(\cK)/G^{\circ}(\cO)$, where $G^\circ$ is the neutral component of $G$.  In that case $\pi_0(G) = \pi_0(G(\cK))$ acts on $\SHA_{G^{\circ},\k}$ and the invariant subring is $\SHA_{G,\k}$.
\end{remark}

Suppose $G$ is connected.  The Satake isomorphism identifies $\SHA_{G,\k}$ with the representation ring $\Rep(G^\vee)$ of the Langlands dual group to $G^\vee$.  If $\varpi$ is a subgroup of order $p$ in $G$ and $\k$ has characteristic $p$ then it turns out the Smith homomorphism maps $\SHA_{G,\k}$ to $\SHA_{Z_G(\varpi);\k}$.  If $G$ is simply connected then $Z_G(\varpi)$ is connected, and we can ask whether the corresponding homomorphism
$$\Rep(G^\vee) \otimes_\bZ \k \to \Rep(Z_G(\varpi)^\vee) \otimes_{\bZ} \k$$
has an interpretation.  Before stating the main result of this section we make a couple of more observations:
\begin{itemize}
\item We can regard $G^\vee$ and $Z_G(\varpi)^\vee$ as algebraic groups defined over any algebraically closed field (or split algebraic groups defined over any field at all) without changing the structure of $\Rep(G^\vee)$ and $\Rep(Z_G(\varpi)^\vee)$.
\item As $\varpi$ is commutative, $Z_G(\varpi)$ contains a maximal torus of $G$ and therefore has the same rank as $G$.  It follows that $Z_G(\varpi)^\vee$ has the same rank as $G^\vee$.
\end{itemize}

\begin{theorem}
\label{thm:centralizer}
Let $G$ be a simply connected simple complex algebraic group.  Let $\varpi \subset G$ be a subgroup of order $p$ and suppose that the centralizer $Z_G(\varpi)$ of $\varpi$ in $G$ is semsimple.  Let $G^\vee$ and $Z_G(\varpi)^\vee$ be the Langlands dual groups over an algebraically closed field $K$
\begin{enumerate}
\item If $K$ has characteristic $p$, then there is an inclusion of $Z_G(\varpi)^\vee$ into $G^\vee$
\item The canonical bijections $\mathrm{coroots}(Z_G(\varpi)^\vee)) \cong \mathrm{roots}(Z_G(\varpi))$ and $\mathrm{coroots}(G^\vee) \cong \mathrm{roots}(G)$ commute with the inclusions induced by $Z_G(\varpi) \hookrightarrow G$ and $Z_G(\varpi)^\vee \hookrightarrow G^\vee$.
\item The square
$$\xymatrix{
\Rep(G^\vee) \otimes_\bZ \k \ar[r]^{\mathrm{res}\quad} \ar[d]_{\cong} & \Rep(Z_G(\varpi)^\vee) \otimes_{\bZ} \k \ar[d]^{\cong} \\
\SHA_{G,\k} \ar[r]_{\Sm} & \SHA_{Z_G(\varpi),\k}
}
$$
commutes.
\end{enumerate}

\end{theorem}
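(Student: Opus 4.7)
The plan is to attack the theorem by the case-by-case analysis signalled in Remark 1.5, organised in three stages. First, the Borel--de Siebenthal/Kac theorem produces a finite, explicit list of pairs $(G, Z_G(\varpi))$: since $Z_G(\varpi)$ is semisimple of the same rank as $G$, it corresponds to a simple root $\alpha$ of the affine Dynkin diagram of $G$ whose coefficient in the highest root equals $p$, and its diagram is obtained by deleting that node. For $p=2$ and $p=3$ this is a short list in each simple type, including $(\Sp(2n), \Sp(2k)\times \Sp(2n-2k))$, $(\Spin(2n+1), \Spin(2k)\times \Spin(2n-2k+1))$, $(\F_4, \Spin(9))$, $(\G_2, \SL_3)$, and the handful of cases in $\E_6, \E_7, \E_8$.

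Second, for each entry of this list I would exhibit an inclusion $Z_G(\varpi)^\vee \hookrightarrow G^\vee$ over a field $K$ of characteristic $p$. The essential ingredient is the existence in characteristic $p$ of the very special isogenies $\Sp(2n)\to \SO(2n+1)$ (for $p=2$), of the analogous self-isogeny of $\F_4$ (for $p=2$), and of the self-isogeny of $\G_2$ (for $p=3$), which exchange long and short roots. Composing the naive block-diagonal inclusion of Langlands-dual root data with one such isogeny lifts it to the desired embedding of algebraic groups; injectivity is then a direct check on Lie algebras. Part (2), the compatibility of the canonical (co)root bijections with the inclusions of (1), is then a bookkeeping verification in each case, transparent once the embedding has been described on the level of Dynkin diagrams used in the construction.

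The main obstacle is part (3), the compatibility of $\Sm$ with restriction under Satake. Both horizontal arrows are $\k$-algebra homomorphisms: restriction is a tensor functor, and $\Sm$ is a ring map by Theorem \ref{thm:smithhecke} together with the ind-variety discussion of Section \ref{sec:ind}. Since $\Rep(G^\vee)\otimes_{\bZ}\k$ is generated as a $\k$-algebra by the classes of fundamental representations, and these correspond under Satake to the Mirkovi\'c--Vilonen basis of $\SHA_{G,\k}$ indexed by fundamental coweights, it suffices to verify the identity on these generators. For a fundamental coweight $\lambda$ with Satake element $h_\lambda$, the Mirkovi\'c--Vilonen theorem identifies the Euler characteristic of the intersection of the closure of the $G(\cO)$-orbit through $\lambda$ with the semi-infinite cell of coweight $\mu$ as the multiplicity of the $\mu$-weight space in the irreducible representation $V_\lambda$ of $G^\vee$. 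The Smith operator restricts $h_\lambda$ to the fixed locus $(G(\cK)/G(\cO))^\varpi$, which under the constructions of stage two is identified with $Z_G(\varpi)(\cK)/Z_G(\varpi)(\cO)$, and (3) becomes the assertion that the resulting expansion in the MV basis of $\SHA_{Z_G(\varpi),\k}$ matches the weight-multiplicity expansion of the restricted character of $V_\lambda$ along the embedding of (1). The delicate technical step I expect to absorb most of the work is the identification of the components of $\varpi$-fixed loci on Schubert varieties in $G(\cK)/G(\cO)$ with the MV strata for $Z_G(\varpi)$; once this geometric identification is established, the matching of numerical weight multiplicities follows from the root-data compatibility of part (2), and, reducing mod $p$, the equality of the two algebra homomorphisms is forced on generators.
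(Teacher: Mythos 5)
Your proposal shares the case-by-case starting point (Borel--de Siebenthal/Kac classification) with the paper, but diverges significantly in the execution, and in one place the divergence looks like a genuine obstacle.

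The paper's key organizing move, stated in Remark~\ref{rem:autom}, is that assertions (2) and (3) are automatic consequences of assertion (1): once an inclusion $Z_G(\varpi)^\vee \hookrightarrow G^\vee$ exists between connected reductive groups of equal rank, it must induce an inclusion of root systems preserving lengths and angles, which determines (2) by uniqueness; and (3) then follows from Lemma~\ref{lem:37}, since after Satake both the restriction map and the Smith map become the same inclusion of $W$-invariants $\bZ[X_*(T)]^{W_G}\hookrightarrow\bZ[X_*(T)]^{W_{Z_G(\varpi)}}$. Your plan, by contrast, attacks (3) directly with Mirkovi\'c--Vilonen cycles, matching Euler characteristics of $\varpi$-fixed loci in Schubert varieties against weight multiplicities; you acknowledge this ``absorbs most of the work,'' and indeed it would require a nontrivial geometric identification that the paper entirely sidesteps. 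You should recognize that once (1) is in hand, (3) is formal, and no MV-theoretic computation is needed.

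For part (1) itself, there is a real gap in your proposed mechanism. You suggest building the embeddings by ``composing the naive block-diagonal inclusion \dots with one such isogeny,'' invoking the very special isogeny $\Sp(2n)\to\SO(2n+1)$ in characteristic $2$ (and its analogues for $\F_4$, $\G_2$). But the very special isogeny is surjective with infinitesimal kernel; postcomposing an embedding with a non-injective isogeny does not yield an embedding, and precomposing is no better. The actual constructions used in the paper are different and genuinely injective: for type $\mathrm{B}_n$ one uses the fact that in characteristic $2$ the bilinear form associated to a nondegenerate quadratic form on an even-dimensional space is alternating, giving $\SO(2a)\hookrightarrow\Sp(2a)$; for type $\mathrm{C}_n$ one uses the nontrivial radical line of the direct-sum odd quadratic form to embed $\SO(2a+1)\times\SO(2b+1)\hookrightarrow\SO(2a+2b+1)$; type $\mathrm{D}_n$ works in all characteristics; and in the exceptional types the paper appeals to the Liebeck--Seitz classification of maximal subgroups, supplemented by a hands-on argument (Lemmas~\ref{lem:f41}, \ref{lem:f42}) for the one $\F_4$ case that classification does not directly cover. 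Your very-special-isogeny strategy would need to be replaced or substantially reworked along these lines.
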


\begin{remark}
\label{rem:autom}
Assertion (2) is automatic from assertion (1): if there in an inclusion of $Z_G(\varpi)^\vee$ into $G^\vee$ then it must induce an inclusion of roots that preserves lengths and angles, and there is only one possibility.  Moreover according to Lemma \ref{lem:37} below, assertion (3) is implied by the following consequence of assertion (2): the induced map on Weyl groups $W_{Z_G(\varpi)^\vee} \to W_{G^\vee}$ is the same as the map $W_{Z_G(\varpi)} \hookrightarrow W_G$ induced by the inclusion of $Z_G(\varpi)$ into $G$, and the natural isomorphisms $W_{G^\vee} \cong W_G$, $W_{Z_G(\varpi)^\vee} \cong W_{Z_G(\varpi)}$.  To prove the Theorem we will therefore only have to check (1). 
\end{remark}

In Section \ref{sec:31} we prove the Satake isomorphism with the aid of the Smith and Borel operators.   In Section \ref{sec:33}, we verify Theorem \ref{thm:centralizer} by a case-by-case analysis.

\subsection{The Satake isomorphism via Smith theory}
\label{sec:31}

A subgroup $H$ of $G$ acts on $G(\cK)/G(\cO)$ by translation, and on  $G(\cK)$ by conjugation.  We have the following basic fixed-point calculation:

\begin{proposition}
Let $H$ be a reductive subgroup of $G$, and let $Z$ be the centralizer of $H$ in $G$.
\begin{itemize}
\item $G(\cK)^H = Z(\cK)$
\item $(G(\cK)/G(\cO))^H = Z(\cK)/Z(\cO)$
\end{itemize}
\end{proposition}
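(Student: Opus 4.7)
The first identity $G(\cK)^H = Z(\cK)$ is essentially formal: the centralizer $Z$ represents the functor $R \mapsto \{g \in G(R) : hgh^{-1} = g \text{ for all } h \in H\}$, so evaluating this functor at $R = \cK$ identifies $Z(\cK)$ with the subset of $G(\cK)$ fixed under the $H$-conjugation action. I would just check that the ind-scheme structure on $G(\cK)$ and the $H$-action on it are inherited level-wise from the action on $G$, so that there is no subtlety in commuting ``$H$-fixed locus'' with ``$\cK$-points.''

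For the second identity, I would first construct the comparison map from left to right using part (1) together with the set-theoretic equality $Z(\cK) \cap G(\cO) = Z(\cO)$ (which holds because $Z$ is closed in $G$); this map is automatically injective. The substance is surjectivity. Given an $H$-fixed coset $gG(\cO)$, the condition $g^{-1} H g \subset G(\cO)$ lets me define an algebraic group homomorphism $\phi : H \to G(\cO)$ by $\phi(h) = g^{-1} h g$. I would then aim to find $k \in G(\cO)$ conjugating $\phi$ to the constant-loop inclusion $\iota : H \hookrightarrow G \hookrightarrow G(\cO)$; once I have such a $k$, the product $g k^{-1}$ centralizes $H$, hence lies in $Z(\cK)$, and represents the same coset as $g$.

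The construction of $k$ would proceed in two steps. First, reduce $\phi$ modulo $t$ to $\phi_0 : H \to G$ and show that $\phi_0$ is $G$-conjugate to $\iota$: since $\phi(h)$ is $G(\cK)$-conjugate to $h$ by construction, for every representation $V$ of $G$ the characteristic polynomial of $\phi(h)$ on $V$ equals that of $h$; reducing at $t = 0$ preserves these polynomials, so the two pullback functors $\iota^{*}, \phi_0^{*} : \Rep(G) \to \Rep(H)$ have matching characters on every object. A Tannakian argument then upgrades this to a symmetric tensor isomorphism of the two functors, forcing $\phi_0$ and $\iota$ to be $G$-conjugate. Second, lift this $G$-conjugacy to a $G(\cO)$-conjugacy using the standard rigidity theorem that a homomorphism from a reductive group into $G(\cO)$ is determined, up to conjugation by the pro-unipotent kernel of $G(\cO) \twoheadrightarrow G$, by its reduction modulo $t$.

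The main obstacle I anticipate is the Tannakian upgrade in the first step: pointwise conjugacy of $\phi_0(h)$ with $h$ comes for free from invariance of characteristic polynomials, but passing from pointwise conjugacy to genuine conjugacy of the two homomorphisms requires care, and this is especially relevant when $H$ is disconnected (as in the main applications, where $H = \varpi \cong \bZ/p$). The pro-unipotent lifting step is standard and will not cause any difficulty; once surjectivity of $Z(\cK)/Z(\cO) \to (G(\cK)/G(\cO))^H$ is established, the proposition follows.
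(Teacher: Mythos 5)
The paper states this Proposition without proof, so there is no official argument to compare against; your overall strategy is the right one. The first bullet is indeed formal as you say, and for the second, reducing to the claim that $\phi\colon H\to G(\cO)$, $\phi(h)=g^{-1}hg$, is $G(\cO)$-conjugate (not merely $G(\cK)$-conjugate) to the constant embedding $\iota$, then splitting that claim into ``reduce mod $t$'' and ``lift through the pro-unipotent congruence filtration,'' is the correct architecture, and the lifting step is standard.

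The gap you flag in the first step is real, but it is not located quite where you put it. Reducing mod $t$ discards the conjugating element $g$; what survives is only that $\iota^*V$ and $\phi_0^*V$ have the same character for every $V\in\Rep(G)$, equivalently that $\phi_0(h)$ and $h$ are $G$-conjugate for each $h$ \emph{individually}. This is element-conjugacy of the homomorphisms $\phi_0$ and $\iota$, and for a general finite (hence linearly reductive) group $H$ it does \emph{not} force global $G$-conjugacy: element-conjugate but non-conjugate pairs of homomorphisms of finite groups into simple algebraic groups are well known (Larsen classified the simple $G$ for which this phenomenon occurs). So the proposed ``Tannakian upgrade'' from matching characters to a tensor isomorphism is exactly the step that can fail. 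Note that for cyclic $H$ -- in particular for the paper's main case $H=\varpi\cong\bZ/p$ -- this is a non-issue, since conjugating a single generator conjugates the whole homomorphism; disconnectedness per se is not the danger, non-cyclicity is. But the Proposition is stated for arbitrary reductive $H$. The robust fix is to not reduce mod $t$ at all and instead use that $\phi$ is an $\cO$-point of the scheme $\mathrm{Hom}(H,G)$ whose generic fiber lands in the $G$-orbit $O$ of $\iota$: linear reductivity of $H$ gives $H^1(H,\mathfrak g)=0$ (and $H^2=0$), so every $G$-orbit on $\mathrm{Hom}(H,G)$ is open, hence also closed; as $O$ is therefore closed, $\phi$ factors through $O\cong G/Z$; and since $G\to G/Z$ is smooth and $\cO$ is strictly Henselian, the $\cO$-point lifts to $k\in G(\cO)$ with $\phi=\mathrm{Ad}(k)\circ\iota$, whence $gk^{-1}\in Z(\cK)$ represents the same coset. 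This handles both of your steps at once and works uniformly for all reductive $H$.
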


We will focus on the following special cases:

\begin{itemize}
\item If $H$ is a finite $p$-group and $\k$ has characteristic $p$, then we may consider the Smith map
$$\Sm:\SHA_{G,\k} \to \SHA_{Z_G(H),\k}$$
of Remark \ref{rem:pgroup}.
\item If $H$ is a connected torus we may consider the Borel map
$$\Bor:\SHA_{G,\bZ} \to \SHA_{Z_G(H),\k}$$
of Section \ref{sec:u1}.
\end{itemize}

Let us consider the Borel map first, in the case when $H = T$ is a maximal torus.  With it we may deduce a version of the classical Satake isomorphism.  

\begin{theorem}[Satake]
Let $G$ be a complex reductive algebraic group, let $T$ be a maximal torus, and let $W$ be the Weyl group.  If we identify $\SHA_{T,\bZ}$ with the group ring $\bZ[X_*(T)]$ by the method of Example \ref{ex:doublecosets}, then the Borel operator for $T$
$$\Bor:\SHA_{G,\bZ} \to \SHA_{T,\k} \cong \bZ[X_*(T)]$$  
is an isomorphism onto the ring of $W$-invariants $\bZ[X_*(T)]^W$.
\end{theorem}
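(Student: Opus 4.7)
My plan is to show that $\Bor$ sends a natural $\bZ$-basis of $\SHA_{G,\bZ}$ bijectively to the monomial $W$-symmetric basis of $\bZ[X_*(T)]^W$, which immediately gives the desired isomorphism.

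First I would pin down the bases on both sides. Via the ind-variety analogue of Example \ref{ex:doublecosets}, $\SHA_{G,\bZ}$ is the $\bZ$-module of constructible functions on the double-coset space $G(\cO)\backslash G(\cK)/G(\cO)$. The Cartan decomposition $G(\cK) = \coprod_{\lambda \in X_*(T)^+} G(\cO) t^\lambda G(\cO)$ then gives the standard basis $\{c_\lambda\}_{\lambda \in X_*(T)^+}$, where $c_\lambda$ is the characteristic function of the $G(\cO)$-orbit $\mathrm{Gr}_G^\lambda := G(\cO) t^\lambda G(\cO)/G(\cO)$. Similarly $\SHA_{T,\bZ} \cong \bZ[X_*(T)]$ has basis $\{e^\mu\}_{\mu \in X_*(T)}$.

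Second I would verify that $\Bor$ lands in $W$-invariants. The key observation is $N_G(T) \subset G \subset G(\cO)$, so left-translation by any representative $w$ of $W$ fixes the base-point coset $G(\cO) \in \mathrm{Gr}_G$. Unwinding the identifications above, the residual action of $W = N_G(T)/T$ on the fixed locus $\mathrm{Gr}_T \cong X_*(T)$ that comes from $N_G(T)$-invariance of functions in $\SHA_{G,\bZ}$ is the standard Weyl action, so $\Bor(f) \in \bZ[X_*(T)]^W$ for every $f$.

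The central computation is then to evaluate $\Bor(c_\lambda)$. By the fixed-point proposition applied to $H = T$, $\mathrm{Gr}_G^T = \mathrm{Gr}_T = \{t^\mu G(\cO) : \mu \in X_*(T)\}$, and $t^\mu$ lies in $\mathrm{Gr}_G^\lambda$ exactly when $t^\mu \in G(\cO) t^\lambda G(\cO)$. By Cartan this happens iff $\mu \in W\lambda$: the inclusion $W\lambda \subseteq \mathrm{Gr}_G^\lambda$ follows from $t^{w\lambda} = w t^\lambda w^{-1}$ with $w \in N_G(T) \subset G(\cO)$, and distinct dominant representatives give disjoint double cosets. Hence
\begin{equation*}
\Bor(c_\lambda) = \sum_{\mu \in W\lambda} e^\mu = m_\lambda,
\end{equation*}
the monomial $W$-symmetric element associated with $\lambda$. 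Since $\{m_\lambda\}_{\lambda \in X_*(T)^+}$ is a $\bZ$-basis of $\bZ[X_*(T)]^W$, $\Bor$ is a $\bZ$-linear bijection onto $\bZ[X_*(T)]^W$. The main structural input I expect to rely on is the fixed-point identification $(\mathrm{Gr}_G^\lambda)^T = \{t^{w\lambda} : w \in W\}$, which is entirely a consequence of the Cartan decomposition; apart from a quick check that each $c_\lambda$ satisfies the property-H support condition of Section \ref{sec:ind} (which holds because $\mathrm{Gr}_G^\lambda$ is finite-dimensional), the remainder is bookkeeping.
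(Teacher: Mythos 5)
Your proposal is correct and follows essentially the same route as the paper's proof: both rest on the Cartan decomposition identifying $G(\cO)$-orbits on $\mathrm{Gr}_G$ with $W$-orbits on $X_*(T)$ via $T$-fixed points, and then observe that restriction to $\mathrm{Gr}_T$ carries the characteristic-function basis $\{c_\lambda\}_{\lambda\in X_*(T)^+}$ bijectively to the monomial symmetric basis $\{m_\lambda\}$ of $\bZ[X_*(T)]^W$. You supply more detail than the paper (explicitly checking $W$-invariance of the image via $N_G(T)\subset G(\cO)$ and verifying property H), but the key input and structure of the argument are the same.
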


\begin{proof}
The $G(\cO)$-orbits on $G(\cK)/G(\cO)$ are in one-to-one correspondence with the $W$-orbits on $X_*(T)$ by the map $O \mapsto O \cap X_*(T)$.  Equivalently, 
the $G(\cK)$-orbits on $G(\cK)/G(\cO) \times G(\cK)/G(\cO)$ are in one-to-one correspondence with the $X_*(T) \rtimes W$-orbits on $X_*(T) \times X_*(T)$.  It follows that restricting a function on $f \in \SHA_{G,\bZ}$ to a function on $X_*(T) \times X_*(T)$ is an isomorphism onto $W$-invariants.
\end{proof}

\subsubsection{Note on representation rings}
\label{sec:norr}

Let $K$ be an algebraically closed field and let $G$ be a reductive algebraic group defined over $K$.  Let $T \subset G$ be a maximal torus for $G$ and let $W$ be the Weyl group of $G$.  Let $\Rep(G)$ denote the representation ring of $G$, i.e. the Grothendieck ring of the $K$-linear tensor category of finite-dimensional algebraic representations of $G$.  Recall that
\begin{enumerate}
\item The representation ring of $T$ itself is naturally identified with the Laurent polynomial ring $\bZ[X^*(T)]$, where $X^*(T)$ denotes the character lattice of $T$.  The action of $W$ on $T$ induces an action of $W$ on $\bZ[X^*(T)]$.
\item Restricting to the maximal torus yields an injective homomorphism $\Rep(G) \to \Rep(T)$, which is an isomorphism onto the ring of $W$-invariants $\Rep(T)^W$.
\end{enumerate}

We have the following trivial consequence:

\begin{lemma}
\label{lem:37}
Let $K$ be an algebraically closed field, let $G$ be an reductive algebraic $K$-group and let $H \subset G$ be a reductive algebraic $K$-subgroup of the same rank as $G$.  Let $W_G$ and $W_H$ denote the Weyl groups of $G$ and $H$ respectively.  There is a commutative square
$$\xymatrix{
\Rep(G) \ar[r]^{\mathrm{res}^{G}_{H}} \ar[d]_{\cong} & \Rep(H) \ar[d]^{\cong} \\
\bZ[X^*(T)]^{W_G} \ar[r]_{\mathrm{inclusion}} & \bZ[X^*(T)]^{W_H}
}$$

\end{lemma}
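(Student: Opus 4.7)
The plan is to unpack the lemma using the two facts recalled in Section \ref{sec:norr}: restriction to a maximal torus identifies $\Rep(G)$ with $\bZ[X^*(T)]^{W_G}$, and likewise for $H$. The only nontrivial choice is a maximal torus that is simultaneously maximal in $G$ and $H$, but this is available because $H$ and $G$ have the same rank: any maximal torus $T$ of $H$ is a torus of $G$ of dimension equal to the rank of $G$, hence is maximal in $G$ too. I would fix such a $T$ at the outset so that both vertical arrows are ``restrict to $T$'' for the \emph{same} torus.

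With that setup, the commutativity of the square is essentially the transitivity of restriction. First I would observe that the Weyl group $W_H$, being generated by reflections in the roots of $H$ (each of which is a root of $G$), embeds naturally into $W_G$; hence $\bZ[X^*(T)]^{W_G} \subset \bZ[X^*(T)]^{W_H}$, and the bottom horizontal arrow of the square is simply the inclusion of invariants. Next I would note that for any representation $V$ of $G$, the composite of $G \to H \to T$ inclusions gives $\mathrm{res}^G_T(V) = \mathrm{res}^H_T(\mathrm{res}^G_H(V))$ as representations of $T$, i.e.\ as elements of $\bZ[X^*(T)]$. Translating through the vertical identifications, this is exactly the statement that going down-then-right equals going right-then-down in the square.

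The only thing to double-check is that the vertical arrows are indeed isomorphisms onto the stated $W$-invariants with the same identification $\Rep(T) = \bZ[X^*(T)]$ on both sides; since we chose a single $T$ and the standard isomorphism $\Rep(T) \cong \bZ[X^*(T)]$ does not depend on the ambient reductive group, this is immediate from the two facts recalled in Section \ref{sec:norr}. There is no real obstacle here: the content of the lemma is the compatibility of the Satake-type identification with restriction to a full-rank reductive subgroup, and it reduces entirely to choosing $T \subset H \subset G$ and invoking transitivity of restriction together with $W_H \subset W_G$.
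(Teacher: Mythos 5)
Your proof is correct and matches the paper's intent: the paper states Lemma \ref{lem:37} without proof, calling it a ``trivial consequence'' of the two facts recalled in Section \ref{sec:norr}, and your argument---choose one maximal torus $T\subset H\subset G$ (possible since $\mathrm{rank}\,H=\mathrm{rank}\,G$), note $W_H\subset W_G$ so the invariant subrings nest, and use transitivity of restriction $\mathrm{res}^G_T=\mathrm{res}^H_T\circ\mathrm{res}^G_H$---is precisely the intended one.
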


\subsection{Proof of Theorem \ref{thm:centralizer}}
\label{sec:33}

By Remark \ref{rem:autom}, to prove Theorem \ref{thm:centralizer} we only have to verify part (1).  That is, we have to show that given a subgroup $\varpi \subset G$ of order $p$, the group $Z_G(\varpi)^\vee$ injects into $G^\vee$.

Recall some features of Kac's classification of semsimple elements whose centralizer is semisimple: 
\begin{enumerate}
\item Besides the identity element, there is one for each node in the Dynkin diagram associated to $G$.  
\item The Dynkin diagram of the centralizer is obtained by deleting this vertex from the extended Dynkin diagram of $G$.  
\item If $\{\alpha_i\}_{i \in I}$ are the simple roots of $G$ and 
$$\alpha_{\mathrm{top}} = \sum_{i \in I} c_i \alpha_i$$
is the maximal root, then if $c_i = 1$ the semisimple element corresponding to $\alpha_i$ is central, and otherwise it is of order $c_i$.  In the noncentral case the element itself is the image of a $c_i$th root of unity under the fundamental coweight $\beta_i:\Gm \to T$ corresponding to $\alpha_i$.  %(These are the coweights satisfying $\langle \alpha_i, \beta_j \rangle = \delta_{ij}$.) 
\item The center of $Z_G(\varpi)$ is a split extension of $\varpi$ by the center of $G$.
\end{enumerate}

\subsubsection{Type $\mathrm{A}_n$}
The only semisimple elements with semisimple centralizers are in the center of $\SL(n)$, in particular their centralizer is all of $\SL(n)$.  Thus there is nothing to prove.

\subsubsection{Type $\mathrm{B}_n$}
In types $\mathrm{B}_n$ and $\mathrm{C}_n$, the existence of the subgroups has to do with the peculiar nature of quadratic forms in characteristic 2.  The essential fact for $\mathrm{B}_n$ is the following: if $q$ is a nondegenerate quadratic form on an even-dimensional vector space over a field of characteristic 2, then the associated bilinear form is alternating (i.e. we have $B(v,v) =0$).  This gives us an inclusion $\iota_a:\SO(2a) \hookrightarrow \Sp(2a)$.

Let us use this fact to construct the interesting subgroups of $G^\vee$ in characteristic 2.  The group $G$ is $\Spin(2n+1)$, which covers $\SO(2n+1)$.  The Langlands dual is $\Sp(2n)/\mu_2$.  The noncentral semisimple elements with semisimple centralizers are lifts of 
$$
\left(
\begin{array}{cc}
-1_{2a} & 0 \\
0 & 1_{2(n-a)+1}
\end{array}
\right)
$$
where $2 \leq a \leq 2n$.  The centralizer is a double cover of $\SO(2a) \times \SO(2(n-a)+1)$, so that the Langlands dual is a quotient of $\SO(2a) \times \Sp(2(n-a))$ by a diagonal central $\mu_2$.  The inclusion of this into $\Sp(2n)/\mu_2$ is covered by the inclusion
$$\SO(2a) \times \Sp(2(n-a)) \stackrel{\iota_a \times \mathrm{id}}{\longrightarrow} \Sp(2a) \times \Sp(2(n-a)) \to \Sp(2n)$$

\subsubsection{Type $\mathrm{C}_n$} 
\label{subsubsec:Cn}
The group is $\Sp(2n)$, and the interesting centralizers are all of the form $\Sp(2a) \times \Sp(2b)$ where $a+b = n$.  These can be described as the centralizers of an element of order 2, that acts as $-1$ on the $2a$-dimensional symplectic subspaces and $+1$ on the $2b$-dimensional symplectic subspace.  We may describe the Langlands dual inclusion $\SO(2a + 1) \times \SO(2b+1)$ into $\SO(2a+2b+1)$ by noting the following feature of quadratic spaces of odd dimension in characteristic 2.

If $K$ has characteristic 2 and we endow $K^{2a+1}$ and $K^{2b+1}$ with the quadratic forms
$$\begin{array}{c}
x_1x_2 +  \cdots + x_{2a-1} x_{2a} + x_{2a+1}^2 \\
y_1y_2 + \cdots + y_{2b-1} y_{2b} + y_{2b+1}^2
\end{array}
$$
then the line $\ell \subset K^{2a+1} \oplus K^{2b+1}$ given by setting $x_{2a+1} = y_{2b+1}$ and all other coordinates zero has the following properties with respect to the direct sum quadratic form $q$ on $K^{2a+1} \oplus K^{2b+1}$: it is perpendicular to everything, and $q$ vanishes identically on it.  The quotient by this line is equivalent the standard quadratic form on $K^{2a+2b+1}$.  In this way we get a homomorphism $\mathrm{O}(2a+1) \times \mathrm{O}(2b+1) \hookrightarrow \mathrm{O}(2a+2b+1)$ and $\SO(2a+1) \times \SO(2b+1) \hookrightarrow \SO(2a+2b+1)$.

\subsubsection{Type $\mathrm{D}_n$}  The group is $\Spin(2n)$ and the Langlands dual is $\SO(2n)/\mu_2$.  The interesting elements all have order 2:
$$
\left(
\begin{array}{cc}
-1_{2a} & 0 \\
0 & 1_{2(n-a)}
\end{array}
\right)
$$
The centralizers are double covers of $\SO(2a) \times \SO(2(n-a))$, whose Langlands duals are $(\SO(2a) \times \SO(2(n-a)))/\mu_2$.  The inclusion into $\SO(2n)/\mu_2$ is covered by the usual inclusion of $\SO(2a) \times \SO(2(n-a))$ into $\SO(2n)$, which exists in every characteristic.

\subsubsection{Exceptional types}
\label{sec:exc}

$$
\begin{array}{c}

\begin{array}{ccc}
\G_2: \, {\mathbf 2} \equiv \!\!\!\!\! {>}\!\!\!\!\! \equiv {\mathbf 3} & \qquad & \F_4: \, {\mathbf 2} - {\mathbf 3} = \!\!\!\! {>}\!\!\!\! = 4 - {\mathbf 2} 
\end{array} 

\\
\\
\\

\begin{array}{ccc}
\begin{array}{lcccccc}
&  & &  \mathbf{2}  & & &\\
\E_6: &1 & \mathbf{2} & \mathbf{3} & \mathbf{2} & 1
\end{array}
&
\begin{array}{lccccccc}
& & & \mathbf{2}  & & & & \\
\E_7: &\mathbf{2} & \mathbf{3} & 4 & \mathbf{3} & \mathbf{2} & 1
\end{array}
&
\begin{array}{lcccccccc}
& & & \mathbf{3} & & & & &\\
\E_8: & \mathbf{2} & 4 & 6 & \mathbf{5} & 4 & \mathbf{3} & \mathbf{2}
\end{array}
\end{array}
\end{array}
$$
The diagram displays the highest roots in the exceptional root systems, with the prime coefficients in boldface.  We have to investigate subgroups of these groups in characteristics 2, 3, and 5.  Maximal subgroups of the exceptional groups have been classified by Liebeck and Seitz \cite{ting}, and it can be seen by consulting Table 10.3 of their manuscript that, with a single exception, each endoscopic group of $G^\vee$ associated to an element of order $p$ in $G$ does appear as a maximal subgroup of $G^\vee/K$ when $K$ has characteristic $p$.

The exception is the element $x \in \F_4$ of order 2 corresponding to the left-most node of the displayed diagram.  
As the affine Dynkin diagrm of $\F_4$ is
$$\circ - \bullet - \bullet = \!\!\!\! {>}\!\!\!\! =  \bullet - \bullet$$
the centralizer must have Dynkin diagram
$$\circ \qquad \quad \bullet = \!\!\!\! {>}\!\!\!\! =  \bullet - \bullet$$
i.e. it should be a quotient of $\SL(2) \times \Sp(6)$, where the highest root of $\SL(2)$ corresponds to the highest long root of $\F_4$ under the inclusion.  In fact it must be $\SL(2) \times \Sp(6)$ modulo the diagonal copy of $\bZ/2 \cong Z(\SL(2)) \cong Z(\Sp(6))$.  To see this, one can reason as follows: the image of $\Sp(6)$ in $\F_4$ centralizes the image of $\SL(2)$, and in particular it must contain the center of $\SL(2) \to \F_4$ in its center.  This map is injective by the next Lemma, so the image of the map from $\Sp(6)$ does not map $Z(\Sp(6))$ to $1$, so it is itself injective.  

\begin{lemma}
\label{lem:f41}
If $\alpha$ is a root (resp. coroot) of $\F_4$, then $\alpha$ is primitive in the weight lattice, i.e. $\alpha/k$ is not an integral weight for any $k \in \bZ$.  Because of this, the coroot homomorphism $\SL(2) \to \F_4$ is injective in any characteristic.
\end{lemma}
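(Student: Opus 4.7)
The plan is to reduce the primitivity claim to a Weyl-orbit argument, then convert primitivity of the coroot into injectivity of the coroot $\SL(2)$ by a torus-splitting argument. I will treat roots and coroots uniformly, since the $\F_4$ root system is self-dual.

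First, I would observe that for $\F_4$ the weight lattice and the root lattice coincide: the Cartan matrix of $\F_4$ has determinant $1$, so $P/Q=0$, and dually the coweight lattice equals the coroot lattice. Thus the claim ``$\alpha/k$ is not an integral weight'' reduces to the claim that no root can be written $k\beta$ with $\beta$ in the root lattice $Q$ and $k\geq 2$, and analogously for coroots in $Q^\vee$.

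Second, I would prove this primitivity using the Weyl group $W$. The group $W$ acts on $Q$ by lattice automorphisms, and every root of $\F_4$ is $W$-conjugate to a simple root (as usual, by iterating simple reflections to decrease the height of a positive root; in $\F_4$ there happen to be two orbits, long and short, each containing a simple root). The simple roots $\alpha_1,\dots,\alpha_4$ are primitive in $Q$ since they form a $\bZ$-basis. If some root $\alpha=w(\alpha_i)$ satisfied $\alpha=k\beta$ with $\beta\in Q$ and $k\geq 2$, then $\alpha_i=k w^{-1}(\beta)$ would also be non-primitive, a contradiction. The same reasoning, applied to the coroot system, shows every coroot is primitive in $Q^\vee$.

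Finally, for the injectivity statement, let $\alpha^\vee$ be a coroot and let $T\subset\F_4$ be a maximal torus. Since $\F_4$ has trivial center, the cocharacter lattice $X_*(T)$ equals $Q^\vee$, and the previous step shows $\alpha^\vee$ is primitive in $X_*(T)$. Extending $\alpha^\vee$ to a $\bZ$-basis of $X_*(T)$ splits $T\cong\Gm\times T'$ with $\alpha^\vee:\Gm\to T$ realized as the inclusion of the first factor, hence a closed immersion of group schemes in any characteristic. The induced homomorphism $\SL(2)\to\F_4$ has kernel contained in $Z(\SL(2))=\mu_2\subset\SL(2)$, and $\mu_2$ sits inside the diagonal maximal torus of $\SL(2)$, which maps into $T$ via $\alpha^\vee$. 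Since $\alpha^\vee$ is a closed immersion, its restriction to $\mu_2$ is injective, so the kernel is trivial.

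The only place one must be careful is characteristic $2$, where $\mu_2$ is infinitesimal rather than \'etale and one cannot test injectivity on $K$-points; but the closed-immersion conclusion is scheme-theoretic, so no additional work is needed. I do not anticipate a substantive obstacle: the proof is essentially the standard observation that primitive cocharacters give closed embeddings of $\Gm$, combined with the numerical fact that $\det(\text{Cartan matrix of }\F_4)=1$.
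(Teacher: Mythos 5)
Your proof is correct and is essentially the paper's argument: both reduce to the simple roots via the Weyl action and both rest on the fact that for $\F_4$ the weight and root lattices coincide (trivial center and fundamental group, equivalently $\det$ of the Cartan matrix equals $1$). The paper checks primitivity of the simple roots by reading off the columns of the Cartan matrix, whereas you observe they form a $\bZ$-basis of $Q=P$; and your closed-immersion argument supplies the justification for the injectivity of the coroot $\SL(2)\to\F_4$, which the paper asserts without proof.
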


\begin{proof}
As both the center and fundamental group of $\F_4$ are trivial, it suffices to check the first assertion for the roots, and by symmetry it suffices to check it for the simple roots.  In the weight basis, the simple roots of $\F_4$ are the columns of its Cartan matrix
$$
\left(
\begin{array}{rrrr}
2 & -1 & 0 & 0 \\
-1 & 2 & -1 & 0 \\
0 & -2 & 2 & -1 \\
0 & 0 & -1 & 2
\end{array}
\right)
$$
which are evidently all primitive.
\end{proof}

Then $Z_{\F_4}(x)^\vee$ must be $\SL(2) \times \Spin(7)$ mod its diagonal copy of $\bZ/2$.  The coroots of $Z_{\F_4}(x)^\vee$ correspond to the coroots of $\F_4^\vee \cong \F_4$ in the same way that the roots of $Z_{\F_4}(x)$ correspond to the roots of $\F_4$.  If we are to have a map $\SL(2) \times \Spin(7) \to \F_4^\vee$, the $\SL(2)$ factor must map the highest root of $\SL(2)$ to the highest \emph{short} root of $\F_4^\vee$.  Let us denote this root by $\gamma$---its coefficients are given by
$$1 - 2 = \!\!\!\! {>}\!\!\!\! = 3 - 2$$
The previous Lemma shows that the coroot map $\gamma^\vee:\SL(2) \to \F_4$ is injective, even in characteristic 2.

For each root $\alpha$ of $\F_4$, write $U_\alpha:\Ga \hookrightarrow \F_4$ for the corresponding root subgroup.  Let $\mathfrak{u}_\alpha \subset \mathfrak{f}_4$ denote the root space in the Lie algebra of $\F_4$.

\begin{lemma}
\label{lem:f42}
If $\gamma$ denotes the highest short root of the root system $\F_4$,
then the centralizers of $U_\gamma$ and of $\mathfrak{u}_\gamma$ in $\F_4$ coincide in characteristic 2.
\end{lemma}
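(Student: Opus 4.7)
The inclusion $Z_G(U_\gamma) \subseteq Z_G(\mathfrak{u}_\gamma)$ is formal: differentiating the conjugation action sends it to the adjoint stabilizer of $X_\gamma$. The content of the lemma in characteristic $2$ is therefore the reverse inclusion, and my plan is to reduce it to a root-by-root check. First I would fix a maximal torus $T \subset \F_4$ having $\gamma$ among its roots. Both centralizers are $T$-stable and meet $T$ in $\ker(\gamma)$, so each is determined by its torus part together with the collection of root subgroups $U_\alpha$ it contains. It therefore suffices to prove, for every root $\alpha \neq -\gamma$ of $\F_4$, the implication $U_\alpha \subseteq Z_G(\mathfrak{u}_\gamma) \Rightarrow U_\alpha \subseteq Z_G(U_\gamma)$.

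For each such $\alpha$ I would invoke the Chevalley commutator formula
\[
[u_\alpha(s),u_\gamma(t)] \;=\; \prod_{i,j \geq 1,\ i\alpha + j\gamma \in \Phi} u_{i\alpha + j\gamma}(C_{ij}\,s^i t^j),
\]
together with its first $t$-derivative at $t = 0$, namely $\mathrm{Ad}(u_\alpha(s))X_\gamma - X_\gamma = \sum_{i \geq 1,\ i\alpha + \gamma \in \Phi} C_{i,1}\,s^i\,X_{i\alpha + \gamma}$. In characteristic $2$, the containment $U_\alpha \subseteq Z_G(\mathfrak{u}_\gamma)$ is equivalent to every $C_{i,1}$ being even, whereas $U_\alpha \subseteq Z_G(U_\gamma)$ requires every $C_{ij}$ to be even. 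Since root strings are connected, whenever $i\alpha + j\gamma \in \Phi$ for some $j \geq 2$ one automatically has $i\alpha + \gamma \in \Phi$, so the desired implication amounts to the claim that no $C_{ij}$ with $j \geq 2$ is odd while all $C_{i',1}$ are even.

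The main obstacle, and the place where the hypothesis on $\gamma$ genuinely enters, is the remaining case analysis. Since the constants $C_{ij}$ depend only on the rank-$2$ closed subsystem $\Psi \subseteq \Phi(\F_4)$ generated by $\alpha$ and $\gamma$, and since $\F_4$ contains no $\G_2$ subsystem, $\Psi$ must be one of $A_1$, $A_1 \times A_1$, $A_2$, or $B_2$. In the first three types only $(i,j) = (1,1)$ contributes and $C_{11} \in \{0, \pm 1\}$, so both centralizer conditions hold or fail in lockstep. In type $B_2$ the root $\gamma$ is short in $\Psi$ (inheriting its length from $\F_4$), and the analysis splits into two subcases: if $\alpha$ is also short then $C_{11} = \pm 2$ with no higher $(i,j)$-term active, so both conditions hold simultaneously in characteristic $2$; if $\alpha$ is long then all active $C_{ij}$ are $\pm 1$, and both conditions fail. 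Combined with the torus reduction above, this establishes the equality of centralizers.
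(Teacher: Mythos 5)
Your root-by-root commutator analysis is correct and is essentially the calculation underlying the paper's citations of Shinoda and Spaltenstein, but the reduction that precedes it has a genuine gap. You assert that because both centralizers are normalized by $T$ and meet $T$ in $\ker(\gamma)$, ``each is determined by its torus part together with the collection of root subgroups $U_\alpha$ it contains.'' This is false for $T$-stable closed subgroups in general: such a subgroup can contain elements of $N(T)$ lying in nontrivial Weyl chambers, and these are invisible to the data you propose to compare. (The simplest counterexample is $T$ versus $N(T)$ itself; both have the same torus part and contain no root subgroups.) In the present situation both centralizers do contain plenty of such elements --- precisely the $tn_w$ with $w$ stabilizing $\gamma$ --- so the omission is not vacuous. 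Your commutator computation therefore establishes that $Z_G(U_\gamma)$ and $Z_G(\mathfrak{u}_\gamma)$ contain the same root subgroups, but it does not by itself establish that the two groups coincide.

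The paper fills exactly this gap. It first observes that in characteristic $2$ the entire unipotent radical $U$ of a Borel lies in \emph{both} centralizers (this is where the special role of the highest short root and the evenness of the short$+$short$\to$long structure constants is used, and is the fact you would extract from your type-$B_2$ case). Having $U$ inside both centralizers makes the Bruhat decomposition $g = t\,u_1\,n_w\,u_2$ effective: since $u_1,u_2 \in U$ automatically lie in either centralizer, membership of $g$ reduces to membership of $t n_w$, and one checks that $tn_w$ centralizes $U_\gamma$ if and only if it centralizes $\mathfrak{u}_\gamma$ (both conditions amount to $w\gamma = \gamma$ together with a single equation in $\gamma(t)$). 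To repair your argument you would need to add this step, or some substitute that controls the intersection of each centralizer with $N(T)$ and shows the two agree there; the torus-plus-root-subgroups reduction alone does not suffice.
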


\begin{proof}
First, one notes that in characteristic 2 the unipotent radical $U$ of the Borel subgroup $B$ of $\F_4$ is contained in both centralizers.  For $\mathfrak{u}_\gamma$, this follows from \cite[Table 1]{spalt}.  For $U_\gamma$ one can consult the commutation relations in \cite[Section 2]{sh}.  Let $T'$ denote the kernel of $\gamma$ regarded as a weight $T \to \Gm$.  $T'$ and $U$ together generate the centralizer of $U_\gamma$ (resp. of $\mathfrak{u}_\gamma$) in $B$.

Let $W$ denote the Weyl group of $\F_4$, and for each $w \in W$ fix an element $n_w \in N(T)$ mapping to $W$, such that $n_w U_\alpha(a) n_w^{-1} = U_{w \alpha}(a)$.  Using the Bruhat decomposition we can write a general element of $\F_4$ as $g = t u_1 n_w u_2$ where $t \in T$ and $u_1,u_2 \in U$.  As both $u_1$ and $u_2$ are automatically in the centralizer of $U_\gamma$ (resp. $\mathfrak{u}_\gamma$), we see that $g$ is in the centralizer if and only if $t \in T'$ and $n_w$ commutes with $U_\gamma$ (resp. with $\mathfrak{u}_\gamma$).  This holds if and only if $w$ stabilizes $\gamma$, which holds if and only if $w$ is generated by the simple reflections associated to the left three nodes.  In particular $g$ belongs to the centralizer of $U_\gamma$ if and only if it belongs to the centralizer of $\mathfrak{u}_\gamma$.
\end{proof}

The centralizer of $\SL(2) \to \F_4$ is the reductive part of the centralizer of $U_\gamma$, which by the Lemma coincides with the reductive part of the centralizer of $\mathfrak{u}_\gamma$.  By \cite[Table 1]{spalt}, this is a simple algebraic group of type $\mathrm{B}_3$, i.e. it is either $\Spin(7)$ or $\SO(7)$.  By Lemma \ref{lem:f41}, the center of this centralizer must contain the center of $\SL(2)$, so we see that it is $\Spin(7)$.  This produces a map $\SL(2) \times \Spin(7) \to \F_4$ whose kernel is the diagonal copy of $\mu_2 = Z(\SL(2)) = Z(\Spin(7))$, as required.

\begin{remark}
By inspecting the table in \cite{ting}, one sees that in these exceptional types there are only three endoscopic groups that are not subgroups in all characteristics: $\PGL(3)$ for $\G_2$, correspoding to the node labeled ``3'' in the $\G_2$ diagram. $\Sp(8)$ for $\F_4$ corresponding to the right-most node in the $\F_4$ diagram, and the $\SL(2)  \times \Spin(7) /\mu_2$ in $\F_4$ that we have just discussed, corresponding to the left-mode node in the $\F_4$ diagram.
\end{remark}

\section{Smith theory for sheaves}

Let $X$ be a real subanalytic or complex algebraic variety.  Let $K$ be a commutative ring.  We let $D^b_{\bR-c}(X;K)$ (resp. $D^b_{\bC-c}(X;K)$) denote the triangulated category of bounded cohomologically $\bR$-constructible (resp. $\bC$-constructible) sheaves of $K$-modules on $X$.  
We will usually abuse notation and write $D^b_c(X;K)$ for one of these categories, and it should be clear from context whether we are in the subanalytic or complex algebraic settings.  If $G$ is a Lie group (resp. complex algebraic group) acting subanalytically (resp. algebraically) on $X$, write $D^b_G(X;K)$ for the $G$-equivariant version of this category.

\subsection{The Tate coefficient category}

Let $\varpi = \bZ/p$ and let $K$ be a field of characteristic $p$.  Let $K[\varpi]$ be the group ring of $\varpi$, and let $D^b(K[\varpi])$ be the bounded derived category of finitely-generated $K[\varpi]$-modules.  We have a thick subcategory $\Perf(K[\varpi])$ spanned by bounded complexes of free $K[\varpi]$-modules.

\begin{definition}
The \emph{Tate category} is the Verdier quotient category $D^b(K[\varpi])/\Perf(K[\varpi])$.  Write $\Perf(\tate)$ for the Tate category.
\end{definition}

\begin{proposition}
The Grothendieck group of the Tate category is $\bZ/p$, generated by the class of the trivial $K[\varpi]$-module $K$.
\end{proposition}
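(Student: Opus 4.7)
The plan is to exploit the particularly simple structure of $K[\varpi]$ when $K$ has characteristic $p$ and $\varpi = \bZ/p$. If $g$ is a generator of $\varpi$, the substitution $g \mapsto 1 + x$ identifies $K[\varpi]$ with $K[x]/(x^p)$; this is a local Artinian principal ideal ring whose unique simple module is the trivial module $K$. The indecomposable finitely generated modules are classified: they are $K[x]/(x^k)$ for $1 \leq k \leq p$, and the only projective among them is $K[\varpi] = K[x]/(x^p)$ itself.

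Next I would compute $K_0(D^b(K[\varpi]))$. For the bounded derived category of a Noetherian (here, Artinian) ring, the Grothendieck group coincides with $K_0$ of the abelian category of finitely generated modules, and by d\'evissage along composition series this is the free abelian group on the classes of simple modules. Here there is only one simple module, $K$, so $K_0(D^b(K[\varpi])) \cong \bZ$, generated by $[K]$. A finitely generated $K[\varpi]$-module of dimension $d$ over $K$ has class $d \cdot [K]$.

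Now I would identify the subgroup killed in passing to the Verdier quotient. For any triangulated category $\cD$ and thick subcategory $\cC$, one has $K_0(\cD/\cC) = K_0(\cD)/\mathrm{im}(K_0(\cC) \to K_0(\cD))$; this is a standard and easily verified consequence of the definitions of $K_0$ and of morphisms in the quotient. The thick subcategory $\Perf(K[\varpi]) \subset D^b(K[\varpi])$ is generated as a triangulated category by the single object $K[\varpi]$, so its image in $K_0(D^b(K[\varpi]))$ is the subgroup generated by $[K[\varpi]]$. Since $K[\varpi]$ has $K$-dimension $p$, its class is $p \cdot [K]$.

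Combining these steps yields $K_0(\Perf(\tate)) = \bZ/p\bZ$, with generator $[K]$. The main obstacle --- really the only nontrivial point --- is the formula for $K_0$ of a Verdier quotient; everything else reduces to the elementary observation that $K[\varpi] = K[x]/(x^p)$ is local with residue field $K$ and $K$-dimension $p$.
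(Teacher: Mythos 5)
Your argument is correct, and it takes a somewhat different route from the paper's. You compute $K_0(D^b(K[\varpi])) \cong \bZ$ by d\'evissage for the local Artinian ring $K[x]/(x^p)$, observe that $\Perf(K[\varpi])$ is the triangulated subcategory generated by $K[\varpi]$ so its image in $K_0$ is $p\bZ$, and then invoke the general fact that $K_0$ of a Verdier quotient by a thick subcategory is $K_0(\cD)/\mathrm{im}(K_0(\cC))$. The paper instead works more by hand: it notes, as you do, that locality of $K[\varpi]$ forces $[K]$ to generate the Grothendieck group of any localization of $D^b(K[\varpi])$, and then exhibits the explicit $\bZ/p$-valued Euler characteristic $\chi(M^\bullet)=\sum (-1)^i \dim_K M^i \bmod p$, checking that it is additive on triangles and vanishes on $\Perf(K[\varpi])$. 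Under the hood the paper's check that $\chi$ descends to $\Perf(\tate)$ is precisely the verification (in this special case) of the Verdier-quotient $K_0$ formula that you cite as a black box, and the fact that free modules have $K$-dimension divisible by $p$ is your observation that $[K[\varpi]]=p[K]$; so the two proofs carry essentially the same content. One small thing your version makes explicit that the paper elides: showing that $\chi$ descends and is surjective only bounds the order of $[K]$ from below, and one also needs $p[K]=0$ (equivalently, $[K[\varpi]]=0$ in the quotient). Your computation of the image of $K_0(\Perf(K[\varpi]))$ as $p\bZ$ handles this cleanly.
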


\begin{proof}
The algebra $K[\bZ/p]$ is local: it has only one simple module $K$.  It follows that $K$ generates the Grothendieck group of $D^b(K[\varpi])$ as well as the Grothendieck group of any localization of $D^b(K[\varpi])$.  To show that the Grothendieck group of $\Perf(\tate)$ is $\bZ/p$ then, it suffices to exhibit a $\bZ/p$-valued invariant $\chi$ of objects of $D^b(K[\varpi])$ with the following properties:
\begin{enumerate}
\item $\chi$ is additive for exact triangles
\item $\chi(M^\bullet) = 0$ when $M^\bullet$ belongs to $\Perf(K[\varpi])$
\item $\chi(K) = 1$
\end{enumerate}
It is easy to check that the invariant
$$\chi(M^\bullet) = \sum_{i \in \bZ} (-1)^i \dim_K(M^i) \text{ mod  }p$$
has the required properties.
\end{proof}

\begin{proposition}
\label{prop:periodic}
The shift-by-2 functor $\Perf(\tate) \to \Perf(\tate): M \mapsto M[2]$ is naturally isomorphic to the identity functor.  If $p = 2$, then the shift-by-1 functor is naturally isomorphic to the identity functor.
\end{proposition}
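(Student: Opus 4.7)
The plan is to produce an isomorphism $K \xrightarrow{\sim} K[2]$ in $\Perf(\tate)$ from an explicit perfect complex, then propagate it to a natural isomorphism of functors $\mathrm{id} \xrightarrow{\sim} [2]$ by tensoring against an arbitrary object. Fix a generator $g$ of $\varpi$ and set $y = g - 1$. Since $K$ has characteristic $p$, the group algebra is $K[\varpi] = K[y]/y^p$, so multiplication by $y$ on $K[\varpi]$ has kernel $(y^{p-1}) \cong K$ and cokernel $K[\varpi]/(y) \cong K$. Let $C$ be the two-term complex
\[
C \;=\; \bigl[\, K[\varpi] \xrightarrow{\,y\,} K[\varpi] \,\bigr]
\]
placed in degrees $0$ and $1$. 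The canonical truncation triangle of $C$ in $D^b(K[\varpi])$ reads
\[
K \longrightarrow C \longrightarrow K[-1] \xrightarrow{\;\delta\;} K[1].
\]
Because $C$ is perfect, it becomes zero in $\Perf(\tate)$, so $\delta$ becomes an isomorphism $K[-1] \xrightarrow{\sim} K[1]$, i.e.\ a class $\alpha \colon K \xrightarrow{\sim} K[2]$.

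To upgrade $\alpha$ to a natural isomorphism of functors, tensor the triangle above over $K$ with an arbitrary $M \in D^b(K[\varpi])$, using the Hopf structure on $K[\varpi]$ to equip all tensor products with the diagonal action. The key lemma is that for any finite-dimensional $K$-vector space $V$, the module $K[\varpi] \otimes_K V$ with diagonal action is free of rank $\dim_K V$ over $K[\varpi]$: an explicit isomorphism to $K[\varpi] \otimes_K V$ with action on the first factor only is given by
\[
g^i \otimes v \;\longmapsto\; g^i \otimes g^{-i}\, v.
\]
Hence $C \otimes_K M$ is perfect, so zero in $\Perf(\tate)$, and the connecting map $\delta \otimes_K M \colon M \to M[2]$ is an isomorphism there, natural in $M$ by functoriality of $-\otimes_K^L M$.

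For $p = 2$ we have $K[\varpi] = K[y]/y^2$, and the short exact sequence
\[
0 \longrightarrow K \xrightarrow{\,1 \mapsto y\,} K[\varpi] \xrightarrow{\;\epsilon\;} K \longrightarrow 0
\]
yields a distinguished triangle $K \to K[\varpi] \to K \xrightarrow{\delta'} K[1]$ whose middle term is already free. Running the same tensor argument produces a natural isomorphism $\mathrm{id} \xrightarrow{\sim} [1]$ on $\Perf(\tate)$. The main technical content in either case is the freeness of $K[\varpi] \otimes_K V$ under the diagonal action; once this is in hand both periodicity statements drop out from the vanishing of a perfect complex in the Tate category.
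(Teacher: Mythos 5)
Correct, and essentially the same approach as the paper. You specialize the four-term periodic resolution to $M = K$ (your complex $C$ is exactly the middle two terms $K[\varpi]\xrightarrow{g-1}K[\varpi]$ of the paper's exact sequence) and then tensor the resulting triangle with $M$, whereas the paper writes the tensored sequence directly; the explicit verification that $K[\varpi]\otimes_K V$ with diagonal action is free is a nice touch that the paper leaves implicit in the phrase ``2-term complex of free modules.''
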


\begin{proof}
Let $g$ be a generator of $\varpi$.  For any $K[\varpi]$-module $M$, we have the 
exact sequence
$$0 \to M \to M \otimes_K K[\varpi] \stackrel{1 - g}{\to} M \otimes_K K[\varpi] \to M \to 0$$
where $\varpi$ acts diagonally on the middle two terms.  The associated short exact sequence of cochain complexes
$$\xymatrix{
0 \ar[r] & M \ar[r] \ar[d] & M \otimes_K K[\varpi] \ar[r] \ar[d] & 0 \ar[r]  \ar[d] & 0 \\
0 \ar[r] & 0 \ar[r] & M \otimes_K K[\varpi] \ar[r] & M \ar[r] & 0
}
$$
induces a map $M \to M[2]$ whose cone is a 2-term complex of free modules.  The proposition follows.

When $p = 2$ we may use the shorter exact sequence
$$0 \to M \to M \otimes_K K[\varpi] \to M \to 0$$
to deduce the Proposition.
\end{proof}

\begin{remark}
\label{rem:tatespec}
The proposition shows that $\Perf(\tate)$ is not the derived category of any abelian category, and indeed can carry no $t$-structure at all.  However it can be shown that $\Perf(\tate)$ is equivalent to the homotopy category of a certain category of  module spectra over an $E_\infty$-ring spectrum $\tate$.  Basically, $\tate$ is the natural ring spectrum whose homotopy groups are the Tate cohomology groups of $\bZ/p$ with coefficients in $K$.  If $p$ is odd, then the homotopy groups of $\tate$ are
$$
\begin{array}{c}
\pi_{2i}(\tate) = K \text{ with generator }x^i\\
\pi_{2i+1}(\tate) = K \text{ with generator } x^i y
\end{array}
$$
with the evident ring structure.  If $p=2$, then we have $\pi_\bullet(\tate) = K[y,y^{-1}]$ with $y \in \pi_1$.  The fact that the natural class in $\pi_2$ is invertible accounts for Proposition \ref{prop:periodic}.
\end{remark}

\subsubsection{Tensor structure}
\label{sec:tensorcoef}

Related to Remark \ref{rem:tatespec}, it is possible to endow $\Perf(\tate)$ with a symmetric monoidal structure.  If $M^\bullet$ and $N^\bullet$ are two bounded complexes of finitely generated $K[\varpi]$-modules, the tensor product $M^\bullet \otimes_K N^\bullet$ is another bounded complex equipped with the diagonal $K[\varpi]$-module structure, endowing $D^b(K[\varpi])$ with a symmetric monoidal structure.  

 If $M^\bullet \in D^b(K[\varpi])$ and $N^\bullet \in \Perf(K[\varpi])$, then $M^\bullet \otimes_K N^\bullet \in \Perf(K[\varpi])$.  Thus, $\otimes_K$ descends to a symmetric monoidal structure on $\Perf(\tate)$, which we denote by $\otimes_{\tate}$.

\begin{remark}
\label{rem:notKpilinear}
Since $K[\varpi]$ is a commutative ring, we can define the $K[\varpi]$-linear tensor product $\stackrel{\mathbf{L}}{\otimes}_{K[\varpi]}$.  Note that this is not the one we are considering when we define $\otimes_{\tate}$.  Indeed, $D^b(K[\varpi])$ is not even closed under $\stackrel{\mathbf{L}}{\otimes}_{K[\varpi]}$, e.g. $K \stackrel{\mathbf{L}}{\otimes}_{K[\varpi]} K$ is unbounded below.
\end{remark}

\subsubsection{Duality}
\label{sec:dualitycoef}

If $M^\bullet$ is a bounded complex of finitely generated $K[\varpi]$-modules, we let $(M^\bullet)^*$ denote the complex of dual $K$-vector spaces equipped with the contragredient $\varpi$-action.  As in Remark \ref{rem:notKpilinear}, we note that this functor differs from the duality functor $M^\bullet \mapsto \mathbf{R}Hom_{K[\varpi]}(M^\bullet,K[\varpi])$.

If $M^\bullet$ is a bounded complex of free modules then so is $(M^\bullet)^*$.  The functor $M \mapsto M^*$ therefore descends to a duality functor on $\Perf(\tate)$, which we denote by $\bD$.

\subsection{Tate coefficients and the Smith operation}

If $Y$ is a real subanalytic variety and $\varpi$ acts trivially on $Y$, then we make the identification
$$D^b_\varpi(Y;K) \cong D^b(Y;K[\varpi])$$
Let us denote by $\Perf(Y;K[\varpi]) \subset D^b(Y;K[\varpi])$ the full subcategory spanned by sheaves of $K[\varpi]$-modules all of whose stalks are perfect.  We will denote the Verdier quotient of $D^b(Y;K[\varpi])$ by $\Perf(Y;K[\varpi])$ by $\Perf(Y;\tate)$.

\begin{lemma}
\label{lem:link}
Let $X$ be a finite-dimensional space on which $\varpi$ acts freely.  Then the global sections functor $\Gamma:D^b_\varpi(X;K) \to D^b_\varpi(\mathit{pt};K) = D^b(K[\varpi])$ takes values in $\Perf(K[\varpi])$.
\end{lemma}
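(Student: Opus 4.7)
The plan is to construct an explicit quasi-isomorphic model of $R\Gamma(X,F)$ that is manifestly a bounded complex of finitely generated free $K[\varpi]$-modules.

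First, I would triangulate. Since $\varpi$ acts freely on the finite-dimensional space $X$, the quotient map $q\colon X\to X/\varpi$ is a covering of degree $p$, and pullback along $q$ identifies $D^b_\varpi(X;K)$ with $D^b(X/\varpi;K)$, so $F$ descends to some $\bar F \in D^b(X/\varpi;K)$. I fix a subanalytic (or algebraic) triangulation of $X/\varpi$ compatible with the stratification of $\bar F$ and lift it along $q$ to a $\varpi$-equivariant triangulation $T$ of $X$. Because the action is free, $\varpi$ permutes the open simplices of $T$ without fixed points.

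Second, I would extract a cellular model. The triangulation $T$ together with the compatibility condition gives a bounded cellular cochain complex (or, for a complex of sheaves $F$, the totalization of a bounded double complex) whose $n$-th term is a direct sum $\bigoplus_{\sigma\in T_n} V_\sigma$ over open $n$-simplices, with $V_\sigma$ the finite-dimensional stalk of $F$ on $\sigma$. The $\varpi$-equivariant structure on $F$ combined with the free permutation of simplices makes this a $K[\varpi]$-linear complex, and grouping simplices into $\varpi$-orbits puts each term in the form $\bigoplus_{[\sigma]} V_\sigma \otimes_K K[\varpi]$---a finitely generated free $K[\varpi]$-module. Finite-dimensionality of $X$ bounds the length of the complex, and the implicit finiteness on $X$ or on the support of $F$ (already needed for $R\Gamma(X,F)$ to lie in $D^b(K[\varpi])$ in the first place) bounds each rank. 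This forces $R\Gamma(X,F)\in\Perf(K[\varpi])$.

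The main obstacle is the triangulation step: producing a $\varpi$-equivariant subanalytic triangulation of $X$ adapted to the stratification of $F$ and with only finitely many $\varpi$-orbits of simplices on the support of $F$. This is a standard refinement of Hardt's triangulation theorem in the subanalytic setting, obtained by triangulating $X/\varpi$ first and pulling back along the covering $q$; it is where both hypotheses on $X$---finite dimension and freeness of the action---are used. Once that triangulation is in hand, the rest of the argument is a direct accounting of the free $\varpi$-action on cells.
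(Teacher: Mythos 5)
Your proof is correct and rests on the same key observation as the paper's: a $\varpi$-invariant triangulation on which $\varpi$ acts freely yields a bounded cochain complex of free $K[\varpi]$-modules, hence a perfect complex. The differences are organizational rather than conceptual. The paper first reduces to the case where $F$ is the constant sheaf on a $\varpi$-invariant closed subset $Y$ (these generate $D^b_\varpi(X;K)$ as a thick triangulated category, and $\Perf(K[\varpi])$ is thick), after which the cochain complex in question is just the ordinary simplicial cochain complex of $Y$ with $K$-coefficients, and the existence of a $\varpi$-invariant triangulation is simply asserted. You instead keep a general constructible complex $F$, triangulate compatibly with its stratification, and use the cellular model with twisted coefficients $\bigoplus_\sigma V_\sigma$ (for $F$ a complex of sheaves, the totalization of a bounded double complex). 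You also make the equivariant triangulation explicit by descending $F$ to $X/\varpi$, triangulating the quotient, and lifting along the degree-$p$ covering map --- a clean way to produce it that exploits freeness directly. What each route buys: the paper's reduction is shorter and sidesteps the need to justify the cellular model for an arbitrary constructible complex; yours avoids the generation argument and produces a concrete, manifestly $K[\varpi]$-linear model. Both share the same implicit finiteness caveat --- one needs $\Gamma(F)$ to land in $D^b(K[\varpi])$ in the first place, so that only finitely many $\varpi$-orbits of simplices contribute --- which you flag explicitly and the paper leaves tacit.
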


\begin{proof}
It suffices to show that $\Gamma(F)$ is a perfect complex of $K[\varpi]$-modules when $F$ is the constant sheaf on a $\varpi$-invariant closed subset $Y$, as these sheaves generate $D^b_\varpi(X;K)$.  Pick a $\varpi$-invariant triangulation of $Y$.  Then $\Gamma(F)$ is quasi-isomorphic to the simplicial cochain complex of this simplicial with coefficients in $K$, together with its natural $\varpi$-action.  As $\varpi$ acts freely on $Y$ it acts freely on the set of $i$-simplices in $Y$, and therefore this cochain complex is perfect.
\end{proof}

\begin{theorem}
\label{thm:link}
Let $X$ be a $\varpi$-space and let $i$ denote the inclusion of $X^\varpi$ into $X$.  The cone on the natural map $i^! \to i^*$ belongs to $\Perf(X^\varpi;K[\varpi])$.
\end{theorem}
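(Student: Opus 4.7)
The starting observation is that the natural map $i^!F \to i^*F$ fits into the standard distinguished triangle
$$i^!F \longrightarrow i^*F \longrightarrow i^* Rj_* j^* F \xrightarrow{+1}$$
where $j:X\setminus X^\varpi \hookrightarrow X$ denotes the complementary open inclusion. So the cone in question is identified with $i^* Rj_* j^* F$, and the goal becomes showing that for every $F \in D^b_\varpi(X;K)$, each stalk of $i^* Rj_* j^* F$ is a perfect complex of $K[\varpi]$-modules. Since $\varpi$ acts freely on $X \setminus X^\varpi$, this should be exactly the situation that Lemma \ref{lem:link} is designed to handle; the task is to realize the stalk as the global sections of an equivariant sheaf on a free $\varpi$-space.

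The plan is therefore to pass to a local model at each fixed point $x \in X^\varpi$. Using the subanalytic (or complex algebraic) triangulation theorem applied to a $\varpi$-equivariant stratification refining the decomposition $X = X^\varpi \sqcup (X \setminus X^\varpi)$, I can produce a cofinal system of $\varpi$-stable open neighborhoods $U \ni x$ such that $U$ is (equivariantly) homeomorphic to an open cone on a compact $\varpi$-space $L_x$ (the link of $x$ in $X$), with $x$ corresponding to the cone vertex and the closed sub-cone on $L_x^\varpi \subset L_x$ corresponding to $U \cap X^\varpi$. The complement $U \setminus X^\varpi$ then equivariantly deformation retracts onto $L_x \setminus L_x^\varpi$, which inherits a free $\varpi$-action from $L_x$.

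Under this identification, the stalk of $Rj_* j^*F$ at $x$ is
$$\left(i^* Rj_* j^* F\right)_x \;\cong\; R\Gamma\!\left(L_x \setminus L_x^\varpi;\, F|_{L_x \setminus L_x^\varpi}\right),$$
an isomorphism of $K[\varpi]$-modules because every stage of the construction is $\varpi$-equivariant. Since $L_x \setminus L_x^\varpi$ is a finite-dimensional subanalytic set carrying a free $\varpi$-action, Lemma \ref{lem:link} applies directly and shows this complex of global sections lies in $\Perf(K[\varpi])$.

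The main technical obstacle is the local conic/link step: producing a genuinely $\varpi$-equivariant system of cone neighborhoods with the indicated retraction. In the real subanalytic setting this can be done by averaging a subanalytic distance function (or more cleanly by choosing a $\varpi$-invariant subanalytic triangulation in which $X^\varpi$ is a subcomplex and taking stars of vertices); in the complex algebraic setting one transports this via the underlying real structure. Once the equivariant local cone is in hand, writing the stalk as global sections over $L_x \setminus L_x^\varpi$ and invoking Lemma \ref{lem:link} finishes the argument, since perfection is a pointwise condition and the triangle above then forces the cone of $i^!F \to i^*F$ into $\Perf(X^\varpi;K[\varpi])$.
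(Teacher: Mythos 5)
Your argument is correct and is essentially the same as the paper's: the paper also reduces to stalks, takes a $\varpi$-invariant regular neighborhood $U$ of a fixed point $x$, identifies the stalk of the cone with $R\Gamma(U \setminus X^\varpi, F)$, and applies Lemma \ref{lem:link} using the free $\varpi$-action there. You have merely made explicit the triangle $i^! \to i^* \to i^*Rj_*j^*$ and the equivariant cone/link structure that the paper compresses into the phrases ``regular neighborhood'' and ``a standard computation.''
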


\begin{proof}
Let $x$ be a $\varpi$-fixed point of $X$, and let $U$ be a regular neighborhood of $x$.  As $\varpi$ is finite we may assume $U$ is $\varpi$-invariant.  Let $L = U - (U \cap X^\varpi)$.  A standard computation identifies the stalk of $C$ at $x$ with the cohomology of $L$ with coefficients in $F\vert_L$.  By Lemma \ref{lem:link} this is perfect.  It follows that $C$ is perfect.
\end{proof}

\begin{definition}
The sheaf-theoretic \emph{Smith operation} is the composite functor
$$D^b_\varpi(X;K) \stackrel{i^*}{\to} D^b_\varpi(X^\varpi,K) \cong D^b(X^\varpi,K[\varpi]) \to \Perf(X^\varpi;\tate)$$
We denote the functor by $\Sm$.
\end{definition}

\begin{remark}
The previous theorem shows that we could define this operation with $i^!$ in place of $i^*$.
\end{remark}

\begin{remark}
\label{rem:hyperbolic}
If $X$ is a complex algebraic variety carrying an action of $\bC^*$, then in between $X$ and $X^{\bC^*}$ we have the attracting set $X^+$.  The hyperbolic localization functor is defined to be the composition of shriek and star restriction functors
$$(X^{\bC^*} \hookrightarrow X^+)^! \circ (X^+ \hookrightarrow X^{\bC^*})^*$$
Smith localization is analogous to hyperbolic localization in the following sense: instead of combining the two restriction functors in a clever way, we simply erase the distinction between them.
\end{remark}

\subsection{Six operations with Tate coefficients}

Suppose that $Y$ is a variety equipped with the trivial $\varpi$-action.

\subsubsection{Duality and tensor product}

Under the identification $D^b_\varpi(Y;K) \cong D^b(Y;K[\varpi])$, the $\varpi$-equivariant Verdier duality operation is a sheaf version of the operation considered in section \ref{sec:dualitycoef}.  Since an object of $D^b(Y;K[\varpi])$ belongs to the subcategory $\Perf(Y;K[\varpi])$ if and only if each stalk belongs to $\Perf(K[\varpi])$, the duality operation preserves $\Perf$ and descends to an operation on $\Perf(Y;\tate)$.  Similarly the tensor product considered in section \ref{sec:tensorcoef} gives a symmetric monoidal structure on $D^b(Y;K[\varpi])$ that descends to a symmetric monoidal structure on $\Perf(Y;\tate)$.

\subsubsection{Pushforward and pullback}
\label{subsubsec:pandp}

Let $Y'$ be a second variety equipped with the trivial $\varpi$-action, and let $u:Y \to Y'$ be a morphism.  If $F'$ is a sheaf of $K[\varpi]$-modules on $Y'$, then the stalk of $u^* F'$ at $y$ is isomorphic to the stalk of $F'$ at $u(y)$.  It follows that $u^*$ carries perfect sheaves of $K[\varpi]$-modules to perfect sheaves of $K[\varpi]$-modules, and descends to an operation $u^*:\Perf(Y';\tate) \to \Perf(Y;\tate)$.  Similary:

\begin{proposition}
If $F$ is a sheaf of perfect $K[\varpi]$-modules on $Y$ then $u_! F$ is a sheaf of perfect $K[\varpi]$-modules on $Y'$.
\end{proposition}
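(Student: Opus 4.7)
The plan is to reduce by proper base change to a statement about a single fiber, then establish that statement via a triangulation. First, I would use proper base change for $u_!$ to obtain the isomorphism $(u_! F)_{y'} \simeq R\Gamma_c(u^{-1}(y'), F|_{u^{-1}(y')})$ for each $y' \in Y'$. Restriction to a locally closed subset preserves the perfect-stalk property, so it suffices to prove: for any subanalytic (resp.\ complex algebraic) variety $X$ with trivial $\varpi$-action and any $F \in \Perf(X; K[\varpi])$, the complex $R\Gamma_c(X, F)$ lies in $\Perf(K[\varpi])$.

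To establish this, I would choose a subanalytic triangulation (resp.\ algebraic cell decomposition) of $X$ compatible with $F$, so that on each open cell $\sigma$ the sheaf $F|_\sigma$ is constant with value $M_\sigma \in \Perf(K[\varpi])$; such a refinement exists by standard constructibility arguments. Then I would induct on the number of cells using the excision triangle
$$R\Gamma_c(U, F|_U) \to R\Gamma_c(X, F) \to R\Gamma_c(Z, F|_Z) \stackrel{+1}{\longrightarrow}$$
for $U$ an open cell and $Z = X \setminus U$ its closed complement. Since $\Perf(K[\varpi])$ is a thick subcategory of $D^b(K[\varpi])$, it is closed under distinguished triangles, so the induction goes through provided the base case holds.

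The base case is $X$ a single open cell $\sigma$ of dimension $d$ and $F$ constant of value $M \in \Perf(K[\varpi])$. Here the projection formula gives $R\Gamma_c(\sigma, F) \simeq R\Gamma_c(\sigma, K) \otimes_K M \simeq M[-d]$, an isomorphism of $K[\varpi]$-modules because the $\varpi$-action on $\sigma$ is trivial, so $\varpi$ acts only on the second factor; a shift of a perfect complex is perfect. Note that tensoring with a finite-dimensional $K$-vector space $V$ preserves $\Perf(K[\varpi])$, since $V \otimes_K K[\varpi] \cong K[\varpi]^{\dim V}$ is free. The main obstacle I expect is the bookkeeping to check that proper base change and the projection formula track the $K[\varpi]$-module structure correctly; this follows formally from the identification $D^b_\varpi(X; K) \cong D^b(X; K[\varpi])$ valid when $\varpi$ acts trivially on $X$.
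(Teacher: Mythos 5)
Your proof is correct and reaches the same reduction (to $Y'$ a point, via proper base change) as the paper, but the ensuing induction runs along a different axis. The paper inducts on the ``length'' of the complex $F$ itself, reducing to the case where $F$ is a sheaf of free $K[\varpi]$-modules concentrated in a single degree, i.e.\ $F = F_1 \otimes_K K[\varpi]$; then $u_!F = u_!(F_1) \otimes_K K[\varpi]$ is visibly a bounded complex of frees. You instead keep $F$ general and induct on the geometry of the fiber: a finite subanalytic cell decomposition adapted to $F$, with the open/closed excision triangle driving the induction and the projection formula $R\Gamma_c(\sigma, F) \simeq R\Gamma_c(\sigma, K) \otimes_K M_\sigma \simeq M_\sigma[-d]$ handling a single cell. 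The paper's route is more economical when $F$ can actually be represented by a bounded complex of sheaves of free $K[\varpi]$-modules (which is implicitly assumed there), while your route sidesteps that representability question entirely by only ever needing $F$ to be locally constant with perfect stalk on each open cell---a fact that follows directly from constructibility. One small caveat: in the complex algebraic case you should still use a subanalytic cell decomposition of the underlying real analytic set rather than an ``algebraic cell decomposition'' (which need not exist); this is cosmetic and does not affect the argument.
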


\begin{proof}
By proper base-change we may assume $Y'$ is a point.  By induction on the length of $F$ we may furthermore assume assume that $F$ is a sheaf of free $K[\varpi]$-modules concentrated in a single degree, i.e. $F = F_1 \otimes_K K[\varpi]$ where $F_1$ is a sheaf of $K$-modules concentrated in a single degree.  Then $u_!(F) = u_!(F_1) \otimes_K K[\varpi]$ is perfect since $u_!(F_1)$ vanishes in degrees $\geq \dim(Y)$.
\end{proof}

It follows that $u_!$ induces a functor $\Perf(Y;\tate) \to \Perf(Y';\tate)$.  Since Verdier duality preserves perfect sheaves we also have well-defined functors $u_*$ and $u^!$.

\subsection{Compatibility of Smith with six operations}

Two classical applications of Smith theory are the following:

\begin{enumerate}
\item In \cite{Q}, Quillen extends Smith's original result (Theorem \ref{thm:1.2}), and shows that the cohomology of a finite-dimensional space with mod $p$ coefficients is closely related to the cohomology of the $\bZ/p$-fixed points with mod $p$ coefficients.
\item In \cite{Bre} and \cite{CK}, it is shown that the fixed points of a $\bZ/p$-action on a space that satisfies Poincar\'e duality mod $p$ again satisfies Poincar\'e duality mod $p$.
\end{enumerate}

These results are consequences of the following general principle: the Smith operation commutes with all other operations.  A generalization of (1) states that $\Sm$ is compatible with pushforwards, and a generalization of (2) states that $\Sm$ is compatible with Verdier duality.  A somewhat more trivial result is that $\Sm$ commutes with pullback; let us prove this result first.

\begin{theorem}
\label{thm:Smithpull412}
Let $X$ and $Y$ be real subanalytic varieties with an action of $\varpi$.  Let $f:X \to Y$ be a $\varpi$-equivariant morphism between them.  The square
$$
\xymatrix{
D^b_\varpi(Y;K) \ar[r]^{f^*} \ar[d]_{\Sm} & D^b_\varpi(X;K) \ar[d]^{\Sm} \\
\Perf(Y^\varpi;\tate) \ar[r]_{f^*} & \Perf(X^\varpi;\tate) }
$$
commutes up to a natural isomorphism.
\end{theorem}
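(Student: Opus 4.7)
The plan is to unwind the definition of $\Sm$ and reduce the statement to the functoriality of ordinary $*$-pullback together with the observation that pullback descends to the Tate quotient. Let me set notation: write $i_X : X^\varpi \hookrightarrow X$ and $i_Y : Y^\varpi \hookrightarrow Y$ for the inclusions of the fixed loci, and let $f^\varpi : X^\varpi \to Y^\varpi$ denote the restriction of $f$. Because $f$ is $\varpi$-equivariant, $f$ carries $X^\varpi$ into $Y^\varpi$, so we have a strictly commutative square of $\varpi$-spaces
$$
\xymatrix{
X^\varpi \ar[r]^{i_X} \ar[d]_{f^\varpi} & X \ar[d]^{f} \\
Y^\varpi \ar[r]_{i_Y} & Y.
}
$$

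First, I would work entirely in the $\varpi$-equivariant constructible derived categories. Applying the canonical isomorphism $(g \circ h)^* \cong h^* \circ g^*$ to the two ways of going around this square produces a natural isomorphism $i_X^* \circ f^* \cong (f^\varpi)^* \circ i_Y^*$ as functors $D^b_\varpi(Y;K) \to D^b_\varpi(X^\varpi;K)$. Identifying $D^b_\varpi(X^\varpi;K)$ with $D^b(X^\varpi;K[\varpi])$ and likewise for $Y^\varpi$, this is a natural isomorphism in the corresponding derived categories of sheaves of $K[\varpi]$-modules, under which the map $(f^\varpi)^* : D^b(Y^\varpi;K[\varpi]) \to D^b(X^\varpi;K[\varpi])$ is literally the ordinary $*$-pullback.

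Second, I would push this isomorphism through the Verdier localization $D^b(-;K[\varpi]) \to \Perf(-;\tate)$. The key point, already established in Section \ref{subsubsec:pandp}, is that $*$-pullback preserves perfect objects: the stalk of $(f^\varpi)^* F$ at a point $x$ is the stalk of $F$ at $f^\varpi(x)$, so if all stalks of $F$ lie in $\Perf(K[\varpi])$ then so do all stalks of $(f^\varpi)^* F$. Hence $(f^\varpi)^*$ carries $\Perf(Y^\varpi;K[\varpi])$ into $\Perf(X^\varpi;K[\varpi])$ and descends to a functor $\Perf(Y^\varpi;\tate) \to \Perf(X^\varpi;\tate)$, which is what is denoted $f^*$ on the bottom of the square in the theorem. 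Composing with the quotient functor turns the isomorphism from the previous paragraph into the desired natural isomorphism $\Sm \circ f^* \cong f^* \circ \Sm$.

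There is no real obstacle here beyond bookkeeping; the only thing to verify is that pullback respects the quotient by $\Perf(-;K[\varpi])$, which is immediate from the stalkwise description. The more substantive compatibilities, e.g. $\Sm$ with $f_!$ or with Verdier duality, are presumably where the genuine work begins, since those rely on Theorem \ref{thm:link} to interchange $i^!$ and $i^*$; for pullback, that machinery is not needed.
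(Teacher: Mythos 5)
Your proof is correct and follows the same route as the paper's: both use the commutative square $f \circ i_X = i_Y \circ f|_{X^\varpi}$ to get $i_X^* f^* \cong (f|_{X^\varpi})^* i_Y^*$ and then pass to the Tate quotient. You spell out the descent of $*$-pullback to $\Perf(-;\tate)$ more explicitly than the paper, but that is exactly the step the paper leaves implicit (having established it in Section \ref{subsubsec:pandp}), so this is the same argument.
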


\begin{proof}
Let $i_X:X^\varpi \to X$ and $i_Y:Y^\varpi \to Y$ denote the inclusion maps.    
As $f \circ i_X = i_Y \circ (f\vert_{X^\varpi})$ we have a natural isomorphism $i_X^* \circ f^* F \cong (f\vert_{X^\varpi})^*\circ i_Y^* F$ in $D^b_\varpi(X^\varpi;K)$, which induces an isomorphism between $\Sm \circ f^* F$ and $(f\vert_{X^\varpi})^* \circ \Sm(F)$ in $\Perf(X^\varpi;\tate)$.

\end{proof}

\begin{theorem}
\label{thm:duality}
Let $X$ be a real subanalytic variety with an action of $\varpi$.  The square
$$
\xymatrix{
D^b_\varpi(X;K) \ar[r]^{\bD_X} \ar[d]_{\Sm} & D^b_\varpi(X;K) \ar[d]^{\Sm} \\
\Perf(X^\varpi;\tate) \ar[r]_{\bD_{X^\varpi}} &  \Perf(X^\varpi;\tate)
}
$$
commutes up to a natural isomorphism.
\end{theorem}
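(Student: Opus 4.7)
The plan is to reduce the statement to two ingredients: (i) the standard six-functor identity that Verdier duality exchanges $i^*$ and $i^!$ for the closed inclusion $i \colon X^\varpi \hookrightarrow X$, and (ii) Theorem \ref{thm:link}, which says that $i^!$ and $i^*$ become canonically isomorphic functors once we pass to $\Perf(X^\varpi;\tate)$.

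First I would recall the $\varpi$-equivariant natural isomorphism
$$i^!\bigl(\bD_X F\bigr) \;\cong\; \bD_{X^\varpi}\bigl(i^*F\bigr)$$
in $D^b_\varpi(X^\varpi;K) = D^b(X^\varpi;K[\varpi])$, valid for any closed $\varpi$-stable subvariety. This comes from $\bD_X F = \mathbf{R}\mathcal{H}om(F,\omega_X)$, the identification $i^!\omega_X \cong \omega_{X^\varpi}$, and the $(i_!,i^!)$-adjunction; the $\varpi$-equivariance is automatic by functoriality. By Section \ref{sec:dualitycoef} and the discussion at the start of Section \ref{subsubsec:pandp}, the duality $\bD_{X^\varpi}$ on $\Perf(X^\varpi;\tate)$ is induced by descending the equivariant Verdier duality on $D^b(X^\varpi;K[\varpi])$, so the displayed isomorphism descends to a natural isomorphism
$$\bigl[\,i^!\bD_XF\,\bigr] \;\cong\; \bD_{X^\varpi}\bigl(\Sm(F)\bigr)$$
in the Tate category.

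Second, I would apply Theorem \ref{thm:link} to $\bD_XF$ in place of $F$: the canonical comparison $i^!\bD_XF \to i^*\bD_XF$ has cone in $\Perf(X^\varpi;K[\varpi])$, which vanishes in $\Perf(X^\varpi;\tate)$, so the comparison becomes an isomorphism $[i^!\bD_XF] \cong [i^*\bD_XF] = \Sm(\bD_XF)$ after projection. Chaining the two isomorphisms above delivers the desired natural isomorphism $\Sm \circ \bD_X \cong \bD_{X^\varpi} \circ \Sm$.

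The main (and essentially only) point requiring care is naturality: one must verify that the canonical comparison $i^! \to i^*$ used in Theorem \ref{thm:link} is compatible with the six-functor isomorphism $i^!\bD \cong \bD i^*$, so that the resulting square of natural transformations between $i^*\bD$, $i^!\bD$, $\bD i^*$, and $\bD i^!$ commutes. This is a standard coherence built into the six-functor formalism, and once it is in hand the two isomorphisms simply compose.
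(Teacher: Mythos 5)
Your proof is correct and takes essentially the same route as the paper: construct the comparison via the standard isomorphism $\bD i^* F \cong i^! \bD F$, pass through the natural map $i^! \bD F \to i^* \bD F$, and invoke Theorem \ref{thm:link} (applied to $\bD F$) to see the cone is perfect and hence vanishes in $\Perf(X^\varpi;\tate)$. The only cosmetic difference is that you chain the two isomorphisms in the Tate category while the paper composes them upstairs in $D^b(X^\varpi;K[\varpi])$ and then verifies the cone of the composite is perfect; the substance is identical.
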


\begin{proof}
Let $i:X^\varpi \to X$ denote the inclusion.  We construct a natural transformation $\bD i^* F \to i^* \bD F$ by composing the natural isomorphism $\bD i^* F \cong i^! \bD F$ with the natural transformation $i^! \bD F \to i^* \bD F$.  To show that the induced map $\Sm \bD F \to \bD \Sm(F)$ is an isomorphism it suffices to show that the cone on $\bD i^* F \to i^* \bD F$ is perfect---this follows from Theorem \ref{thm:link}.
\end{proof}

\begin{theorem}
\label{thm:Smithpush}
Let $X$ and $Y$ be real subanalytic varieties with an action of $\varpi$.  Let $f:X \to Y$ be a $\varpi$-equivariant morphism between them.  The squares
$$
\xymatrix{
D^b_\varpi(X;K) \ar[r]^{f_!} \ar[d]_{\Sm} & D^b_\varpi(Y;K) \ar[d]^{\Sm} & & D^b_\varpi(X;K) \ar[r]^{f_*} \ar[d]_{\Sm} & D^b_\varpi(Y;K) \ar[d]^{\Sm} \\
\Perf(X^\varpi;\tate) \ar[r]_{(f\vert_{X^\varpi})_!} & \Perf(Y^\varpi;\tate) & & \Perf(X^\varpi;\tate) \ar[r]_{(f\vert_{X^\varpi})_*}&  \Perf(Y^\varpi;\tate)
}
$$
commute up to natural isomorphisms.
\end{theorem}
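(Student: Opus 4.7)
The plan is to combine proper base change in the equivariant constructible category $D^b_\varpi$ with the identification of $i^*$ and $i^!$ in the Tate quotient provided by Theorem \ref{thm:link}. Let $i_X : X^\varpi \hookrightarrow X$ and $i_Y : Y^\varpi \hookrightarrow Y$ denote the closed inclusions of fixed points, and let $f' := f|_{X^\varpi} : X^\varpi \to Y^\varpi$. Because $f$ is $\varpi$-equivariant, these fit into a Cartesian square with $i_X, i_Y$ horizontal and $f, f'$ vertical, with trivial $\varpi$-action on the bottom row.

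For the $f_!$ square, I would invoke proper base change in $D^b_\varpi(-;K)$ to produce a canonical natural isomorphism $i_Y^* \circ f_! \cong f'_! \circ i_X^*$ of functors $D^b_\varpi(X;K) \to D^b_\varpi(Y^\varpi;K) \cong D^b(Y^\varpi;K[\varpi])$. By Subsection \ref{subsubsec:pandp}, $f'_!$ preserves $\Perf(-;K[\varpi])$ and so descends to a well-defined functor $\Perf(X^\varpi;\tate) \to \Perf(Y^\varpi;\tate)$. Composing the base change isomorphism with the Verdier projection onto the Tate quotient then yields the desired natural isomorphism $\Sm \circ f_! \cong f'_! \circ \Sm$.

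For the $f_*$ square, I would deduce the result formally from the $f_!$ case via Verdier duality. Writing $f_* = \bD_Y \circ f_! \circ \bD_X$ and $f'_* = \bD_{Y^\varpi} \circ f'_! \circ \bD_{X^\varpi}$, one chains Theorem \ref{thm:duality} (applied at each end), the $f_!$ isomorphism just produced, and the fact that Verdier duality descends to $\Perf(-;\tate)$ (Section \ref{sec:dualitycoef}). An alternative, more direct route is to apply $!$-base change to obtain $i_Y^! \circ f_* \cong f'_* \circ i_X^!$, and then use Theorem \ref{thm:link} to replace each $i^!$ by the corresponding $i^*$ in the Tate quotient.

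The only real thing to check is coherence: every intermediate natural transformation must be shown either to factor through, or to become an isomorphism after inverting, the thick subcategory $\Perf(-;K[\varpi])$. This is not a serious obstacle, because each of the building blocks (proper base change, the descent of $f'_!$ from Subsection \ref{subsubsec:pandp}, the descent of $\bD$ from Section \ref{sec:dualitycoef}, and the identification $i^! \cong i^*$ from Theorem \ref{thm:link}) has already been arranged to respect perfectness, so the argument reduces to assembling the previously established results.
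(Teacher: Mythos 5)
The central step of your argument is incorrect: the square
\[
\xymatrix{
X^\varpi \ar[r]^{i_X} \ar[d]_{f'} & X \ar[d]^{f} \\
Y^\varpi \ar[r]_{i_Y} & Y
}
\]
is \emph{not} Cartesian in general, so proper base change does not give an isomorphism $i_Y^* f_! \cong f'_! i_X^*$. The fiber product $X \times_Y Y^\varpi$ is $f^{-1}(Y^\varpi)$, which typically strictly contains $X^\varpi$ (a point mapping to a fixed point of $Y$ need not itself be fixed). For a stark example, take $X = \varpi$ with the regular action, $Y = \mathit{pt}$, $f$ the constant map: then $X^\varpi = \emptyset$ while $f^{-1}(Y^\varpi) = X$, and $i_Y^* f_! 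K_X = \Gamma_c(K_X) = K[\varpi]$ whereas $f'_! i_X^* K_X = 0$. These are not isomorphic in $D^b(K[\varpi])$ --- they only become isomorphic after passing to $\Perf(\tate)$, precisely because $K[\varpi]$ is perfect.

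What the geometry actually supplies is only the base change \emph{morphism} $i_Y^* f_! \to f'_! i_X^*$ (equivalently the adjunction/exchange map). The real content of the theorem --- and the step your proposal skips --- is to show that the cone of this morphism is perfect, so that the morphism becomes an isomorphism in the Verdier quotient $\Perf(-;\tate)$. The paper does this by reducing (via stalk-wise checking and devissage to constant sheaves on closed invariant subsets) to showing that the cone of $\Gamma_c(K_X) \to \Gamma_c(K_{X^\varpi})$ is perfect, which follows because $\varpi$ acts freely on $X - X^\varpi$ (Lemma \ref{lem:link}). If the base change map were already an isomorphism before quotienting, the Tate category would be unnecessary and Smith theory would be vacuous. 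Your reduction of the $f_*$ square to the $f_!$ square via Verdier duality is fine, but your ``alternative, more direct route'' via $!$-base change has the same flaw, since the square is still not Cartesian.
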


\begin{proof}
As $\bD \circ f_! \circ \bD = f_*$, it is enough to consider the left-hand square.  Let $i_X:X^\varpi \to X$ and $i_Y:Y^\varpi \to Y$ be the inclusion maps.  We have an adjunction morphism
$$i_Y^* f_! \to (f\vert_{X^\varpi})_! i_X^*$$
in the category of functors from $D^b_\varpi(X;K) \to D^b_\varpi(Y^\varpi;K)$, which induces a morphism $n:\Sm \circ f_! \to f_! \circ \Sm$.  

To show that $n$ is an isomorphism it suffices to show that the cone on $i_Y^* f_! F \to (f\vert_{X^\varpi})_! i_X^*F$ is a perfect.  Since this may be checked on stalks, we may as well assume that $Y$ is a single point.  We may furthermore reduce to the case where $F$ is a constant sheaf on a closed subanalytic $\varpi$-invariant subset, as these sheaves generate $D^b_\varpi(X;K)$.  Now we only have to check that the cone on the map
$$\Gamma_c(K_X) \to \Gamma_c(K_{X^\varpi})$$
has a perfect cone.  This may be verified as in the proof of Lemma \ref{lem:link}.
\end{proof}

\begin{theorem}
\label{thm:Smithnearby}
Let $X$ be a complex algebraic variety with an action of $\varpi$, and let $f:X \to \bC$ be a $\varpi$-invariant map, and let $\psi_f$ denote the nearby cycles functor
$$\psi_f:D^b_\varpi(X;K) \to D^b_\varpi(f^{-1}(0);K)$$
The square
$$\xymatrix{
D^b_\varpi(X;K) \ar[r]^{\psi_f\quad} \ar[d]_{\Sm} & D^b_\varpi(f^{-1}(0);K) \ar[d]^{\Sm} \\
\Perf(X^\varpi;\tate) \ar[r]_{\psi_f\quad} & \Perf(f^{-1}(0)^\varpi;\tate)
}
$$
commutes up to natural isomorphism.
\end{theorem}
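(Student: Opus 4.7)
The plan is to express $\psi_f$ as a composition of functors each of which has already been shown to commute with $\Sm$, and then string these compatibilities together. Let $X^* = f^{-1}(\bC \setminus \{0\})$ with open inclusion $j: X^* \hookrightarrow X$, let $\tilde{\bC^*} \to \bC^*$ denote the universal cover of the punctured disc, form the fiber product $\tilde{X}^* = X^* \times_{\bC^*} \tilde{\bC^*}$ with projection $\tilde{p}: \tilde{X}^* \to X^*$, and set $\tilde{\pi} = j \circ \tilde{p}: \tilde{X}^* \to X$. The standard formula for nearby cycles is then
$$
\psi_f(F) \;=\; i^*\, R\tilde{\pi}_*\, \tilde{\pi}^*\, F,
$$
where $i: f^{-1}(0) \hookrightarrow X$ is the inclusion, and my task reduces to showing that each of the three functors in this composition commutes with $\Sm$ in a way that fits together.

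Next I would unpack the $\varpi$-equivariance. Since $f$ is $\varpi$-invariant, $\varpi$ acts trivially on $\bC^*$ and hence on $\tilde{\bC^*}$, so the $\varpi$-action on $X^*$ lifts to $\tilde{X}^*$ by acting through the first factor. A direct computation then gives
$$
(\tilde{X}^*)^\varpi \;=\; (X^\varpi)^* \times_{\bC^*} \tilde{\bC^*},
$$
which is precisely the analogous space for the pair $(X^\varpi, f|_{X^\varpi})$, and similarly $(f^{-1}(0))^\varpi = (f|_{X^\varpi})^{-1}(0)$. The maps $\tilde{\pi}$ and $i$ therefore restrict to their counterparts on $X^\varpi$. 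With these identifications, Theorem \ref{thm:Smithpull412} yields natural isomorphisms $\Sm \circ \tilde{\pi}^* \cong \tilde{\pi}^* \circ \Sm$ and $\Sm \circ i^* \cong i^* \circ \Sm$, while Theorem \ref{thm:Smithpush} supplies $\Sm \circ R\tilde{\pi}_* \cong R\tilde{\pi}_* \circ \Sm$. Composing the three isomorphisms delivers the required $\Sm \circ \psi_f \cong \psi_{f|_{X^\varpi}} \circ \Sm$.

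The step I expect to be the main obstacle is that $\tilde{\bC^*}$, and hence $\tilde{X}^*$, is not a complex algebraic (nor even subanalytic) variety, so Theorems \ref{thm:Smithpull412} and \ref{thm:Smithpush} do not apply to $\tilde{\pi}$ verbatim. I would address this by approximating the universal cover by the tower of finite cyclic covers $[n]\colon \bC^* \to \bC^*$, $z \mapsto z^n$. For each $n$ the pullback $\tilde{X}_n^* = X^* \times_{\bC^*,[n]} \bC^*$ is a genuine complex algebraic variety with a $\varpi$-action trivial on the second factor, so the previous theorems apply directly to the corresponding maps $\tilde{\pi}_n$; meanwhile $\psi_f(F)$ is recovered as a filtered colimit (over $n$ by divisibility) of the functors $i^* R\tilde{\pi}_{n,*} \tilde{\pi}_n^*$. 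Smith-compatibility then descends through the colimit, using that a filtered colimit of natural isomorphisms in $\Perf(X^\varpi;\tate)$ remains a natural isomorphism. As a sanity check, the function-level analogue of this argument is essentially Proposition \ref{prop:speccompat} applied to $\mathrm{Re}(f)$ or $|f|$, which gives one confidence that the sheaf-theoretic version behaves as expected.
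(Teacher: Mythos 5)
Your instinct to express $\psi_f$ as a composite of functors already known to commute with $\Sm$ is exactly right, and your parenthetical ``sanity check'' at the end is in fact the key to the paper's actual proof, not a side remark. But the specific decomposition you chose---via the universal cover of $\bC^*$---runs into a serious problem that your proposed fix does not resolve, whereas the paper sidesteps the issue entirely.

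The paper's proof is a one-liner: it uses the \emph{real subanalytic} formula
$$\psi_f \;=\; i^* \, j_* \, j^*,\qquad j\colon f^{-1}(\bR_{>0})\hookrightarrow X,\quad i\colon f^{-1}(0)\hookrightarrow X,$$
which is a standard re-expression of nearby cycles (the half-line $f^{-1}(\bR_{>0})$ deformation retracts locally onto a single Milnor fiber). Since $\bR_{>0}$ is subanalytic though not algebraic, $j$ lives in the subanalytic setting of Theorems \ref{thm:Smithpull412} and \ref{thm:Smithpush}, and both $j$ and $i$ are $\varpi$-equivariant with $f^{-1}(\bR_{>0})^\varpi = (f|_{X^\varpi})^{-1}(\bR_{>0})$ and $f^{-1}(0)^\varpi = (f|_{X^\varpi})^{-1}(0)$. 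The theorem is then an immediate consequence. Your ``sanity check'' via Proposition \ref{prop:speccompat} is essentially this formula; you should promote it to the main argument.

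Your colimit fix, by contrast, has a genuine gap. Take $X=\bC$, $f=\mathrm{id}$, $F=K_{\bC^*}$, $K=\bF_p$, so $\psi_f(F)\cong K$. For every finite cover $\tilde\pi_n\colon z\mapsto z^n$, the stalk of $i^*R\tilde\pi_{n,*}\tilde\pi_n^*F$ at $0$ is the cohomology of a punctured disk, i.e.\ $K\oplus K[-1]$, and the transition map from level $n$ to level $nm$ acts on $H^1$ by multiplication by $m$. In characteristic $p$ this is eventually the zero map, so the colimit of $H^1$ is $0$ and one does recover $K$ degree-by-degree---but the transition maps never become isomorphisms, so the system does \emph{not} stabilize in $D^b_c$. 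Filtered colimits do not exist in the bounded constructible derived category, nor in the Verdier quotient $\Perf(X^\varpi;\tate)$ (which is built from $D^b(K[\varpi])$, not from a cocomplete category), so the phrase ``a filtered colimit of natural isomorphisms in $\Perf(X^\varpi;\tate)$ remains a natural isomorphism'' does not have a meaning in the ambient framework. You could try to rescue this with ind-completions or dg/$\infty$-categorical enhancements, but that is substantial extra machinery, and you would also need to prove the colimit formula for $\psi_f$ (which is nonstandard). Replacing $\tilde{\bC^*}$ with $\bR_{>0}$ avoids all of this.
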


\begin{proof}
We have $\psi_f = i^*j_* j^*$ where
$j$ is the inclusion of $f^{-1}(\bR_{>0})$ into $X$ and
$i$ is the inclusion of $f^{-1}(0)$ into $X$.  The Theorem the follows from Theorems \ref{thm:Smithpull412} and \ref{thm:Smithpush}.

\end{proof}

\subsection{Smith theory for equivariant sheaves}

Let $X$ be a subanalytic space and $G$ a Lie group acting subanalytically on $X$.  The bar construction $[X/G]$ is the simplicial space whose $n$th term is $G^{\times n} \times X$, and whose face and degeneracy maps are given by multiplication and insertion.  A $G$-equivariant sheaf on $X$ is a complex of simplicial sheaves on $[X/G]$ whose cohomology simplicial sheaves are ``effective'' or ``Cartesian.''  We refer to \cite{BL} for more details; informally we are given a complex of sheaves $F^k$ on each $G^{\times k} \times X$, together with a quasi-isomorphism $\phi^* F^\ell \to F^k$ for each structure map $\phi:G^{\times k} \times X \to G^{\times \ell} \times X$, and these quasi-isomorphisms are required to be compatible with each other in a suitable sense.  We call an equivariant sheaf bounded if each $F^k$ is finitely many nonzero cohomology sheaves, or equivalently if $F^0$ has finitely many nonzero cohomology sheaves.

Let us denote the triangulated category of bounded $G$-equivariant sheaves on $X$ by $D^b([X/G];K)$.  If we have a subgroup $\varpi \subset G$, then $\varpi$ acts on each term $G^{\times k} \times X$ of $[X/G]$ in the following way:
$$h \cdot (g_1,\ldots,g_k,x) = (hg_1h^{-1},\ldots,hg_k h^{-1},hx)$$
We let $D^b_\varpi([X/G];K)$ denote the category of Cartesian sheaves on the bisimplicial space $\varpi^j \times G^k \times X$.  Roughly speaking, to give an object of $D^b_\varpi([X/G];K)$ one gives a $\varpi$-equivariant sheaf $F^k$ on each $G^{\times k} \times X$, together with a $\varpi$-equivariant quasi-isomorphism for each structure map $G^{\times k} \times X \to G^{\times \ell} \times X$.

\begin{definition}
Applying $\Sm$ term-by-term gives us a functor 
$$D^b_\varpi([X/G];K) \to D^b([X^\varpi/Z_G(\varpi)];\tate)$$
We denote this functor by $\Sm'$.
\end{definition}

A standard argument shows that the category of Cartesian bisimplicial sheaves on a bisimplicial space is equivalent to the category of Cartesian simplicial sheaves on the diagonal.  In the case of $\varpi^{\times j} \times G^{\times k} \times X$ we may describe this diagonal concretely:

\begin{proposition}
\begin{enumerate}
\item The simplicial space $\varpi^{\times n} \times G^{\times n} \times X$ is naturally isomorphic to the bar construction $[X/(G \rtimes \varpi)]$, where the semidirect product $G \rtimes \varpi$ is constructed using the conjugation action of $\varpi$ on $G$, and $G \rtimes \varpi$  acts on $X$ via $((g,h),x) \mapsto ghx$.
\item The map $G \rtimes \varpi \to G \times \varpi:(g,h) \mapsto (gh,h)$ is a group isomorphism.
\end{enumerate}
\end{proposition}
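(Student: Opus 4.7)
I would prove (2) first as a direct verification, then use it to structure the argument for (1).

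For (2), the multiplication in $G \rtimes \varpi$ is $(g_1, h_1)(g_2, h_2) = (g_1 h_1 g_2 h_1^{-1}, h_1 h_2)$. Applying the proposed map $\phi(g, h) = (gh, h)$, one finds that both $\phi((g_1, h_1)(g_2, h_2))$ and $\phi(g_1, h_1)\phi(g_2, h_2)$ simplify to $(g_1 h_1 g_2 h_2, h_1 h_2)$, so $\phi$ is a homomorphism. The inverse map $(g', h) \mapsto (g' h^{-1}, h)$ exhibits bijectivity. Conceptually, $\phi$ trivializes the semidirect twist because the action of $\varpi$ on $G$ is inner---it factors through the inclusion $\varpi \subset G$.

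For (1), both simplicial spaces have underlying $n$-simplices $\varpi^n \times G^n \times X$, so the task is to construct a bijection $\Psi_n$ on these sets that intertwines the face and degeneracy operators. The subtlety is that the conjugation twists appear in different places on the two sides. In the bar construction of $G \rtimes \varpi$, the internal face maps $d_i$ for $0 < i < n$ carry conjugation through the semidirect multiplication, while $d_n$ is simply $((g_1, h_1), \ldots, (g_{n-1}, h_{n-1}), g_n h_n x)$. In the bisimplicial diagonal, by contrast, the internal face maps multiply componentwise with no twist, and all conjugation is concentrated in $d_n$, which yields $(h_1, \ldots, h_{n-1}, h_n g_1 h_n^{-1}, \ldots, h_n g_{n-1} h_n^{-1}, h_n g_n x)$.

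The plan is to take $\Psi_n(h_1, \ldots, h_n, g_1, \ldots, g_n, x) = ((G_1, h_1), \ldots, (G_n, h_n), x')$, where each $G_i$ is a conjugate of $g_i$ by an appropriate partial product $H_j = h_1 \cdots h_j$ and $x'$ is an appropriate translate of $x$, then verify simplicial compatibility by a combinatorial check using the relation $H_i = H_{i-1} h_i$. Part (2) provides a useful shortcut: composing levelwise with $\phi$, it suffices to identify the bisimplicial diagonal with $[X/(G \times \varpi)]$ in which $\varpi$ now acts trivially on $X$, which cleanly separates the two directions. The main obstacle is pinning down the precise formula for $\Psi_n$; the underlying identity---that the nerve of a semidirect product $H \rtimes \Gamma$ equals the diagonal of the obvious bisimplicial construction built from $N\Gamma$ acting on $NH$---is classical in simplicial homotopy theory, and the only new content here is installing the $X$-factor and checking that the action of $G \rtimes \varpi$ on $X$ is transported compatibly.
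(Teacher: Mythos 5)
The paper offers no proof of this proposition---it is stated as a direct description of the diagonal following the remark ``a standard argument shows\ldots''---so there is nothing to compare your approach against. Your verification of (2) is complete and correct; the identity $\phi((g_1,h_1)(g_2,h_2)) = (g_1 h_1 g_2 h_2, h_1 h_2) = \phi(g_1,h_1)\phi(g_2,h_2)$ and the inverse $(g',h) \mapsto (g'h^{-1},h)$ are exactly right.

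For (1), your overall plan is sound, but two details need correction. First, the appropriate conjugator is a \emph{right-to-left} partial product, not the left-to-right $H_j = h_1\cdots h_j$ you wrote down. With the bar conventions you state for $d_n$, the map
$$\Psi_n(h_1,\ldots,h_n,g_1,\ldots,g_n,x) = \bigl((P_1 g_1 P_1^{-1}, h_1),\ldots,(P_n g_n P_n^{-1}, h_n),x\bigr), \qquad P_j := h_j h_{j+1}\cdots h_n,$$
intertwines all face maps; the check uses the identities $P_j^{-1} h_j = P_{j+1}^{-1}$ (with $P_{n+1}=e$) and $h_j P_{j+1} = P_j$. Second, you do not need to translate $x$: the $X$-coordinate passes through unchanged, i.e.\ $x' = x$. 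You can verify this at level $1$, where $\Psi_1(h,g,x) = ((hgh^{-1},h),x)$ is forced by matching $d_0$ and $d_1$. Finally, your proposed shortcut via (2) is more misleading than helpful: transporting along $\phi^{\times n}$ does not turn the diagonal into $[X/(G\times\varpi)]$ with the \emph{obvious} product structure, because the conjugation twist in $d_n$ of the diagonal does not disappear under the evident identification; $\Psi_n$ itself is where that twist gets absorbed, and composing with $\phi$ afterward does not save any work.
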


It follows from (2) that we have a simplicial map $f:[X/(G \rtimes \varpi)] \to [X/G]$.  We define the Smith operator for equivariant sheaves as the composite

$$D^b([X/G];K) \stackrel{f^*}{\to} D^b([X/G \rtimes \varpi];K) \cong D^b_\varpi([X/G];K) \stackrel{\Sm'}{\longrightarrow} D^b([X^\varpi/Z_G(\varpi)];\tate)$$

\subsection{Conjecture on perverse sheaves}
\label{sec:conjecture}

By Proposition \ref{prop:periodic}, the category $\Perf(Y;\tate)$ can carry no t-structure.  Nevertheless, I believe that the Smith operator 
$$\Sm:D^b_\varpi(X;K) \to \Perf(X^\varpi;\tate)$$
interacts well with the perverse $t$-structure on $D^b_\varpi(X;K)$ when $X$ is a complex algebraic variety.  Before stating the conjecture, note that there is a natural functor
$$D^b(X^\varpi;K) \to \Perf(X^\varpi;\tate)$$
that carries a sheaf $F$ first to $F \otimes_K K[\varpi]$ in $D^b_\varpi(X^\varpi;\tate)$ and then to its image in $\Perf(X^\varpi;\tate)$.  Let us denote this functor by $\otimes_K \tate$, as suggested by Remark \ref{rem:tatespec}.

\begin{conjecture}
Let $X$ be a complex algebraic variety equipped with a $\varpi$-action, and let $P$ be a $\varpi$-equivariant perverse sheaf of $K$-vector spaces on $X$.  Then there exist perverse sheaves of $K$-vector spaces $P_1, \ldots, P_n$ on $X^\varpi$ and integers $a_1,\ldots,a_n$ such that 
$$\Sm(P) \cong (P_1[a_1] \oplus \cdots \oplus P_n[a_n]) \otimes_K \tate$$
in $\Perf(X^\varpi;\tate)$
\end{conjecture}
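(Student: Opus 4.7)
The plan is to reduce to the case of equivariant local systems on smooth $\varpi$-varieties via $\varpi$-equivariant resolution of singularities, and then assemble the general case using the Decomposition Theorem on the fixed-point locus. First I would use the Artinian property of $\varpi$-equivariant perverse sheaves to reduce by dévissage to simple objects $P = \mathrm{IC}(Y,\mathcal{L})$, where $Y \subset X$ is a $\varpi$-stable irreducible closed subvariety and $\mathcal{L}$ is an irreducible $\varpi$-equivariant local system on a smooth Zariski open subset $Y^\circ \subset Y$. This reduction step requires knowing that the class of objects of $\Perf(X^\varpi;\tate)$ of the conjectural form is closed under direct summands (via Krull--Schmidt in $\Perf(X^\varpi;\tate)$) and under short exact sequences of equivariant perverse sheaves; the latter should reduce to a vanishing of $\mathrm{Hom}^1$ between shifted perverse sheaves tensored with $\tate$.

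The next step is equivariant resolution: apply $\varpi$-equivariant resolution of singularities to obtain a proper birational $\varpi$-equivariant morphism $\pi:\widetilde{Y} \to Y$ with $\widetilde{Y}$ smooth, together with an equivariant extension of $\pi^*\mathcal{L}$ to a local system $\widetilde{\mathcal{L}}$ on $\widetilde{Y}$ (after passing to an equivariant cover if necessary). By the equivariant Decomposition Theorem, $\pi_*\widetilde{\mathcal{L}}[\dim\widetilde{Y}]$ decomposes as a direct sum of shifted simple equivariant IC sheaves, with $\mathrm{IC}(Y,\mathcal{L})$ appearing as one summand. Now apply $\Sm$ and use compatibility with pushforward (Theorem \ref{thm:Smithpush}) to obtain
$$\Sm(\pi_*\widetilde{\mathcal{L}}[\dim\widetilde{Y}]) \cong (\pi|_{\widetilde{Y}^\varpi})_*\,\Sm(\widetilde{\mathcal{L}}[\dim\widetilde{Y}]).$$
Since $p$ is invertible in $\bC$ and $\widetilde{Y}$ is smooth, Iversen's theorem guarantees that $\widetilde{Y}^\varpi$ is smooth. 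A local calculation, in coordinates where $\widetilde{Y}$ looks \'etale-locally like $\widetilde{Y}^\varpi \times V$ with $V^\varpi = 0$, identifies $\Sm(\widetilde{\mathcal{L}}[\dim\widetilde{Y}])$ with the restriction $\widetilde{\mathcal{L}}|_{\widetilde{Y}^\varpi}[\dim\widetilde{Y}]$ viewed in $\Perf(\widetilde{Y}^\varpi;\tate)$; rewritten as a perverse shift on $\widetilde{Y}^\varpi$, this is a shifted perverse sheaf tensored with $\tate$. Finally, apply the ordinary (non-equivariant) Decomposition Theorem to the proper map $\pi|_{\widetilde{Y}^\varpi}$ on $Y^\varpi$ to express the pushforward as a sum of shifted IC sheaves tensored with $\tate$, and extract the summand corresponding to $\Sm(\mathrm{IC}(Y,\mathcal{L}))$.

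The two main obstacles I expect are the following. First, summand- and extension-closure of the target class in $\Perf(X^\varpi;\tate)$ is delicate: this triangulated category is not the derived category of any abelian category (Proposition \ref{prop:periodic}), so Krull--Schmidt and the splitting of triangles need to be established by hand, presumably by comparing with the module category of the ring spectrum $\tate$ and exploiting its $2$-periodicity. Second, and more substantively, the $\varpi$-module structure on the stalks of $\widetilde{\mathcal{L}}|_{\widetilde{Y}^\varpi}$ need not be trivial: even after discarding free $K[\varpi]$-summands (which vanish in $\Perf(\tate)$), what remains may be an indecomposable $K[\varpi]$-module of length $>1$, whose image in $\Perf(\tate)$ is a nontrivial extension of trivial modules rather than a genuine direct sum. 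Expressing such a sheaf in the literal form $\bigoplus P_i[a_i] \otimes_K \tate$ required by the conjecture likely requires Tannakian input from the reductive structure of the stabilizer, or a refined analysis of how the non-free indecomposables of $K[\varpi]$ interact with the perverse $t$-structure on $\widetilde{Y}^\varpi$.
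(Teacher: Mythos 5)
This statement is a \emph{Conjecture} in the paper, not a theorem: Section~\ref{sec:conjecture} offers only heuristic evidence (affine pushforwards via Artin's theorem together with Theorem~\ref{thm:Smithpush}, nearby cycles of constant sheaves, a microlocal argument, an analogy with Braden's hyperbolic localization) and a worked example, and the author explicitly leaves a proof open. So there is no proof in the paper to compare your attempt against; it has to stand on its own.

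Taken on its own, your argument has a gap more basic than the two obstacles you flag. Both of your appeals to the Decomposition Theorem are made for $K$-coefficients, and in the setting of the conjecture $K$ necessarily has characteristic $p$ (this is what makes $\Sm$ and the Tate category exist at all). The Decomposition Theorem fails for proper pushforward with positive-characteristic coefficients, and the paper's own example in Section~4.5.1 is a counterexample to exactly the structure your argument needs: there $\Sm(\mathrm{IC}_X)$ is a shifted \emph{tilting} perverse sheaf tensored with $\tate$, not a sum of shifted IC sheaves, because $\pi_* K[\dim]$ from an equivariant resolution is not semisimple over such $K$. So neither the upstairs decomposition of $\pi_*\widetilde{\mathcal{L}}[\dim\widetilde{Y}]$ into shifted equivariant ICs nor the analogous downstairs decomposition over $\widetilde{Y}^\varpi$ is available, and the final step of ``extracting the summand'' has no footing. (The conjecture asks only for perverse $P_i$, not IC sheaves, precisely because the stronger IC-statement is false.) Your flagged obstacles are also genuine, and the first in fact defeats your opening d\'evissage: the target class is not closed under extensions, since an extension of shifted trivial $K[\varpi]$-modules can be $K[\varpi]/\mathfrak{m}^2$, which for $p\geq 5$ is not a shift of $K$ in $\Perf(\tate)$ (up to shift the only indecomposables agreeing with $K$ are $K$ and $K[\varpi]/\mathfrak{m}^{p-1}$, since $\Omega$ has period two). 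Any successful approach will have to use finer complex-geometric input than d\'evissage through simples plus resolution.
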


I furthermore believe that the $P_i$ are at least some of the time functorially associated to $P$, but I do not know how to formulate this precisely.  Note that by Proposition \ref{prop:periodic}, the $a_i$ are irrelevant for $p = 2$ and only relevant mod $2$ for $p > 2$.

Let me discuss some evidence for this conjecture
\begin{enumerate}
\item Let $U$ be a smooth affine open subset of $X$ and let $i:U \to X$ denote the inclusion map---then $K_U[\dim(U)]$ is a perverse sheaf on $U$ and a theorem of Artin shows that $i_! K_U[\dim(U)]$ and $i_* K_U[\dim(U)]$ are perverse on $X$.  If $U$ is stable for the $\varpi$-action then these are $\varpi$-equivariant, and the space of fixed points $U^\varpi$ is again a smooth affine open subset of $X^\varpi$, and $i_! K_{U^\varpi}$ and $i_*K_{U^\varpi}$ are also shifts of perverse sheaves.  The conjecture then holds for this class of perverse sheaves by Theorem \ref{thm:Smithpush}.

\item A similar argument shows that the Conjecture holds for perverse sheaves of the form $\psi_f K_{f^{-1}(1)}$, where $f:Y \to \bC$ is a family of varieties whose general fiber is smooth.

\item Microlocal considerations can be used to justify the Conjecture.  Nadler and Zaslow \cite{NZ}
construct a dictionary between Lagrangian submanifolds of $T^*M$ and constructible sheaves on $M$, lifting the less functorial, many-to-one dictionary  between constructible sheaves and conic Lagrangian subsets considered by Kashiwara and Schapira \cite{KS}.  When $M$ is a complex manifold it is natural to ask which Lagrangians in $T^*M$ correspond to perverse sheaves on $M$.   The answer, up to shift, is those Lagrangians which are also complex submanifolds (possibly immersed) in the natural complex structure on $T^*M$---this is an unpublished result of Nadler's.  

Now one can reason as follows: we may replace the perverse sheaf $P$ by a complex Lagrangian $L \subset T^* M$.  We may hope to express the $\varpi$-equivariance of $P$ by saying that $L$ is stable for the $\varpi$-action on $T^*M$, and that the Smith operator should carry $L$ to $L^\varpi$ (Section \ref{sec:singsupp} gives some evidence for this idea).  If $L$ is a complex submanifold of $T^* M$ then $L^\varpi$ will be a complex submanifold of $T^* (M^\varpi)$, which is consistent with the Conjecture.

\item We argued in Remark \ref{rem:hyperbolic} that the Smith operator is analogous to hyperbolic localization for $T$-actions.  It is proved in \cite{Braden} that hyperbolic localization interacts well with perverse sheaves.
\end{enumerate}

\subsubsection{Example}

Let $X$ be the one-point compactification of the total space of the line bundle $\cO(n)$ on $\bP^1$---denote the cone point by $\ast$.  If $n$ is positive, this is an algebraic variety.  A basic example of a perverse sheaf on $X$ is the intersection homology sheaf $IC_X = j_{!*}K[2]$ of Deligne-Goresky-MacPherson.  At a smooth point of $X$, the stalk of $IC_X$ is $K[2]$, while at the point $\ast$ the stalk cohomology is ``half the cohomology of the link,'' i.e. it is $H^0(L;K)$ in degree $-2$ and $H^1(L;K)$ in degree $-1$ and zero in other degrees, where $L$ denotes the 3-dimensional link of $\ast$ in $X$.  

In fact $L$ is the lens space $(S^3 - \{0,0\})/\mu_n$.  If $p$ does not divide $n$ then $H^1(L;K) = 0$, so that $IC_X$ is just the shifted constant sheaf.  Let us instead suppose that $p$ does divide $n$, so that $H^1(L,K) = K$. 

Let $\bZ/p$ act on $\bC^2$ by $(x,y) \mapsto (x,\eta \cdot y)$, where $\eta$ is a $p$th root of unity.  This induces an action of $\bZ/p$ on $X = \ast \cup (\bC^2 - \{0\})/\mu_n \cup \bP^1$ whose fixed point set is the one-point compactification of the two lines $\cO(n)_0 \coprod \cO(n)_\infty$---topologically, this is a wedge of two $\bP^1$s, call them $Y_1$ and $Y_2$.

On each $Y_i$, there is up to isomorphism a unique indecomposable perverse sheaf $P_i$ that is constant on $Y_i - \ast$, that is isomorphic to its Verdier dual, and that admits a surjection (in the perverse $t$-structure) onto the skyscraper sheaf at $\ast$.  (It can be described as the projective cover as well as the injective hull of this skyscraper sheaf, and also as the tilting extension of the constant perverse sheaf on $Y_i - \ast$).

Since $IC_X$ is the constant sheaf $K[2]$ along $Y_i - \ast$ for $i=1,2$, we have a map
$$IC_X \vert_{Y_1 \cup Y_2} \to f_{1*} K[2] \oplus f_{2*} K[2]$$
where $f_i$ denotes the inclusion of $Y_i - \ast$ into $Y_i$.  The cone on this map is a skyscraper sheaf supported on $\ast$ placed in degree $-1$, denote it by $\delta[1]$.

Note that $f_1$ and $f_2$ are affine, so that $f_{i*} K[1]$ is perverse.  Thus $\Sm(IC_X)$ is isomorphic to $P[1] \otimes_K \cT$, where $P$ is the kernel of the surjective map of perverse sheaves $f_{1*} K[1] \oplus f_{2*} K[1] \to \delta[1]$
\newline

\noindent
\emph{Acknowledgments:} I thank Florian Herzig, Gopal Prasad, and Ting Xue for help with algebraic groups.  In particular, most of the material in Section \ref{sec:exc} I learned from Ting.  While developing these ideas I benefited from discussions with Paul Goerss, David Nadler, and Zhiwei Yun.


\begin{thebibliography}{99}
\bibitem{AB} M. Atiyah and R. Bott, ``The moment map and equivariant cohomology,'' Topology {\bf 23} (1984) 1--28
\bibitem{BL} J. Bernstein and V. Lunts, ``Equivariant sheaves and functors,''  Springer (1994).
\bibitem{Bor} A. Borel, ``Seminar on transformation groups'' Ann. of Math. studies {\bf 46},  (1960).
\bibitem{bds} A. Borel and J. de Siebenthal, ``Les sous-groupes fermŽs de rang maximum des groupes de Lie clos.'' Comment. Math. Helv. {\bf 23} (1949) 200--221.
\bibitem{Braden} T. Braden, ``Hyperbolic localization of intersection cohomology'' Transform. Groups {\bf 8} (2003) 209Ð216
\bibitem{Bre} G. Bredon, ``Fixed point sets of actions on Poincar\'e duality spaces,'' Topology {\bf 12} (1973) 159--175.

\bibitem{Bot} R. Bott, ``The space of loops on a Lie group,'' Michigan Math. J. {\bf 5} (1958) 35--61.
\bibitem{CK} T. Chang, T. Skjelbred, ``Group actions on Poincar\'e duality spaces,'' Bull. Amer. Math. Soc. {\bf 78} (1972) 1024--1026.

%\bibitem{JMW} D. Juteau, C. Mautner, G. Williamson, ``Parity sheaves''
\bibitem{GKM} M. Goresky, R. Kottwitz, and R. MacPherson, ``Equivariant cohomology, Koszul duality, and the localization theorem,'' Inv. Math. {\bf 131} (1998), 25--83
\bibitem{kac} V. Kac, ``Automorphisms of finite order of semisimple Lie algebras,'' Funkts. Anal. Prilozh., {\bf 3} (1969) 252--254.

\bibitem{KS} M. Kashiwara and P. Schapira, ``Sheaves on manifolds.'' Springer (1990)
\bibitem{ting} M. Liebeck and G. Seitz, ``The maximal subgroups of positive dimension in  exceptional algebraic groups'' Mem. Am. Math. Soc. {\bf 169} (2004)

\bibitem{MV} I. Mirkovi{\'c} and K. Vilonen, ``Geometric Langlands duality and representations of algebraic groups
over commutative rings,'' Ann. of Math. {\bf 166} (2007), 95--143.
\bibitem{Nadler} D. Nadler, ``Perverse sheaves on real loop Grassmannians,'' Invent. Math. {\bf 159} (2005) 1--73
\bibitem{Nadler2} D. Nadler, ``Microlocal branes are constructible sheaves,'' Selecta Math. {\bf 15} (2009), no. 4, 563--619
\bibitem{NZ} D. Nadler and E. Zaslow, ``Constructible sheaves and the Fukaya category,''  J. Amer. Math. Soc. {\bf 22} (2009), 233--286.
\bibitem{Q} D. Quillen, ``The spectrum of an equivariant cohomology ring'' Ann. of Math. {\bf 94} (1971)
\bibitem{Schapira} P. Schapira, ``Operations on constructible functions'' J. pure and applied algebra (1991)
\bibitem{sh} K. Shinoda, ``The conjugacy classes of type $(\F_4)$ over finite fields of characteristic 2.''  J. Fac. Sci. Univ. Tokyo, {\bf 21} (1974)

\bibitem{S} P. Smith, ``A theorem on fixed points for periodic transformations,'' Ann. of Math. (1934)
\bibitem{spalt} ``Nilpotent classes in Lie algebras of type $\F_4$ over fields of characteristic 2.'' J. Fac. Sci. Univ. Tokyo, {\bf 30} (1984), 517--524.

%\bibitem[T]{T} D. Treumann, ``A topological approach to inductions theorems in Springer theory,'' Represent. Theory 2009
%\bibitem[YZ]{YZ} Z. Yun and X. Zhu, ``Integral homology of loop groups via Langlands dual groups'' 2009 preprint {\sf arXiv:0909.5487}



\end{thebibliography}
\end{document}